\newtheorem{theorem}{Theorem}[section]
\newtheorem{lemma}[theorem]{Lemma}
\newtheorem{proposition}[theorem]{Proposition}
\newtheorem{corollary}[theorem]{Corollary}
\theoremstyle{definition}
\newtheorem{definition}[theorem]{Definition}
\newtheorem{example}[theorem]{Example}
\newtheorem{conjecture}[theorem]{Conjecture}
\newtheorem{remark}[theorem]{Remark}
\newcommand{\id}{\text{id}}
\newcommand{\sVec}{\operatorname{\operatorname{\mathsf{sVec}}}}
\renewcommand{\Vec}{\operatorname{\operatorname{\mathsf{Vec}}}}
\DeclareMathOperator{\Pic}{\operatorname{\mathsf{Pic}}}
\DeclareMathOperator{\BrPic}{\operatorname{\mathsf{BrPic}}}
\newcommand{\GL}{\text{GL}}
\newcommand{\Gr}{\text{Gr}}
\newcommand{\ad}{\text{ad}}
\DeclareMathOperator{\Aut}{\operatorname{\mathsf{Aut}}}
\DeclareMathOperator{\Sym}{\operatorname{\mathsf{Sym}}}
\DeclareMathOperator{\Ext}{\operatorname{\mathsf{Ext}}}
\DeclareMathOperator{\St}{\operatorname{\mathsf{St}}}
\DeclareMathOperator{\Ind}{\operatorname{\mathsf{Ind}}}
\DeclareMathOperator{\Rep}{\operatorname{\mathsf{Rep}}}
\DeclareMathOperator{\Inv}{\operatorname{\mathsf{Inv}}}
\DeclareMathOperator{\Hom}{\operatorname{\mathsf{Hom}}}
\newcommand{\op}{\text{op}}
\newcommand{\eps}{\varepsilon}
\newcommand{\B}{\mathcal{B}}
\newcommand{\C}{\mathcal{C}}
\newcommand{\D}{\mathcal{D}}
\newcommand{\E}{\mathcal{E}}
\newcommand{\Z}{\mathcal{Z}}
\renewcommand{\L}{\mathcal{L}}
\newcommand{\M}{\mathcal{M}}
\newcommand{\A}{\mathcal{A}}
\newcommand{\N}{\mathcal{N}}
\newcommand{\be}{\mathbf{1}}
\newcommand{\0}{_{(0)}}
\renewcommand{\1}{_{(1)}}
\renewcommand{\2}{_{(2)}}
\newcommand{\3}{_{(3)}}
\renewcommand{\be}{\mathbf{1}}
\newcommand{\bt}{\boxtimes}
\newcommand{\ot}{\otimes}
\newcommand{\beq}{\begin{equation}}
\newcommand{\eeq}{\end{equation}}
\newcommand{\bpf}{\begin{proof}}
\newcommand{\epf}{\end{proof}}
\newcommand{\bth}{\begin{theorem}}
\renewcommand{\eth}{\end{theorem}}
\newcommand{\bpr}{\begin{proposition}}
\newcommand{\epr}{\end{proposition}}
\newcommand{\ble}{\begin{lemma}}
\newcommand{\ele}{\end{lemma}}
\newcommand{\bco}{\begin{corollary}}
\newcommand{\eco}{\end{corollary}}
\newcommand{\bde}{\begin{definition}}
\newcommand{\ede}{\end{definition}}
\newcommand{\bex}{\begin{example}}
\newcommand{\eex}{\end{example}}
\newcommand{\bre}{\begin{remark}}
\newcommand{\ere}{\end{remark}}
\newcommand{\bcj}{\begin{conjecture}}
\newcommand{\ecj}{\end{conjecture}}
\newcommand{\End}{\text{End}}
\newcommand{\Hinv}{\text{H}_\text{inv}^2}
\begin{document}

\title{On the Brauer-Picard group of  a finite symmetric tensor category}
\author{Costel-Gabriel Bontea}
\address{Department of Mathematics and Statistics,
University of New Hampshire,  Durham, NH 03824, USA}
\email{costel.bontea@gmail.com}
\author{Dmitri Nikshych}
\address{Department of Mathematics and Statistics,
University of New Hampshire,  Durham, NH 03824, USA}
\email{nikshych@math.unh.edu}

\begin{abstract}
Let $\C_n$ denote the representation category of finite supergroup
$\wedge k^n \rtimes \mathbb{Z}/2\mathbb{Z}$. We compute the Brauer-Picard group  $\BrPic(\C_n)$  of $\C_n$.
This is done by identifying $\BrPic(\C_n)$ with the group of braided tensor autoequivalences
of the Drinfeld center of $\C_n$ and studying the action of the latter group on the categorical Lagrangian  Grassmannian of $\C_n$.
We show that this action corresponds to the action of a projective symplectic group  on a classical
Lagrangian Grassmannian.
\end{abstract}

\maketitle

\bigskip

\section{Introduction}

Let $\C$ be a finite tensor category. The notion of tensor product of $\C$-bimodule categories
\cite{ENO} generalizes the notion of tensor product of bimodules over a ring.
Bimodule categories invertible with respect to the tensor  product are of
particular interest.  Equivalence classes of  such categories  form a group, called the {\em Brauer-Picard group}
of $\C$ and denoted $\BrPic(\C)$. This group  plays a crucial role in construction and classification
of group-graded extensions of $\C$ \cite{ENO}.  When $\C$ is the category of finite dimensional representations of  a finite dimensional
Hopf algebra $H$ the group $\BrPic(\C)$ is known as the {\em full Brauer group} of $H$,
i.e., the {\em Brauer group} of the Drinfeld double of $H$, see \cite{COZ}. Brauer groups of Hopf algebras were
extensively studied in the literature, see, e.g.,  \cite{COZ, C, CC, CZ} among many other works.

Let $\Z(\C)$ denote the {\em Drinfeld center} of $\C$. There is a canonical isomorphism
\begin{equation}
\label{crucial iso}
\BrPic(\C) \cong  \Aut^{br}(\Z(\C)),
\end{equation}
where  $\Aut^{br}(\Z(\C))$ is the group of braided tensor  autoequivalences of $\Z(\C)$. This fact
was first established in \cite{ENO} for fusion categories and was later extended to finite tensor categories in \cite{DN}.

In practice it is much easier to work with the group  $\Aut^{br}(\Z(\C))$  than with $\BrPic(\C)$, since the multiplication
of  the latter is defined by an abstract universal property while for the former it is simply the composition
of functors. In addition, $\Aut^{br}(\Z(\C))$ can be viewed as a generalization
of  the classical orthogonal group which brings important geometric insights.
In particular, one can study actions of $\Aut^{br}(\Z(\C))$ on categorical analogues
of Grassmannians. This approach was used in \cite{NR} to compute  the Brauer-Picard
groups of pointed fusion categories.

In this paper we use the above approach in non-semisimple setting to
compute the Brauer-Picard group of the representation category $\C_n$ of finite supergroup
$E(n):=\wedge k^n \rtimes \mathbb{Z}/2\mathbb{Z}$ (note that $\C_n$ is the most general example of a symmetric
finite tensor category without non-trivial Tannakian subcategories \cite{De}). Namely, we show that
$ \BrPic(\C_n) \cong PSp_{2n}(k) \times \mathbb{Z}/2\mathbb{Z}$, where  $PSp_{2n}(k)$
denotes the projective symplectic group (Theorem~\ref{main thm} and Corollary~\ref{main cor}).

Let us explain details  of our argument. We study a projective representation $\rho$ of $\Aut^{br}(\Z(\C_n))$ on the
$2n$-dimensional  symplectic space $\Ext_{\Z(\C_n)}^{1}(\chi, \varepsilon)$,  where $\varepsilon$ and $\chi$
are the two one-dimensional representations of $\Z(\C_n)$ (this $\Ext$ space is identified with a space
of skew primitive elements in the Drinfeld double of $E(n)$, see Remark~\ref{skew-prims}).
The Lagrangian Grassmannian of this symplectic space
turns out to be equivariantly isomorphic to the  categorical Lagrangian Grassmannian $\mathbb{L}_0(\C_n)$,
i.e., to the set  of symmetric tensor subcategories of $\Z(\C_n)$ equivalent to~$\C_n$.
The stabilizer of a point of $\mathbb{L}_0(\C_n)$  is known (see Proposition~\ref{stabilizer of C} and
\cite[Proposition 6.8]{NR}).  Combining this analysis with the results of Carnovale and Cuadra \cite{CC}
about the Picard group of $\C_n$ we find the kernel of $\rho$  and  show that its  image
consists of classes  of symplectic matrices.  This allows to compute $\Aut^{br}(\Z(\C_n))$.

We  note that  the infinitesimal version of our result
(which says that the Lie algebra of outer twisted derivations of the double of $E(n)$
is isomorphic to the symplectic Lie algebra  $\mathfrak{s}\mathfrak{p}(2n)$)
was obtained by Cuadra and Davydov in \cite{CDa}.

The paper is organized as follows.

Section 2 contains preliminary material about Hopf algebras, tensor categories, and their Drinfeld centers.
Here we recall definition of the Brauer-Picard group of a tensor category.
We  also describe the projective action of the group of tensor autoequivalences of a tensor category
on certain $\Ext$-spaces.

In Section 3 we recall isomorphism \eqref{crucial iso} from \cite{ENO, DN} and discuss how braided
autoequivalences of the Drinfeld center of $\Rep(H)$, where $H$ is a finite dimensional
Hopf algebra, can be induced from invariant twists of $H$ and from invariant $2$-cocycles on $H$.
We also consider the action of the group of braided autoequivalences of the center on the
categorical Lagrangian Grassmannian.

In Section 4  we recall the structure of Hopf algebra $E(n)=\wedge k^n \rtimes \mathbb{Z}/2\mathbb{Z}$
and explicit description of its invariant twists  and $2$-cocycles following Bichon and Carnovale \cite{BC}.

In Section 5 we identify the categorical Lagrangian  Grassmannian of $\C_n$ with the classical
Lagrangian Grassmannian of  a symplectic form.

In Section 6 we compute  $\Aut^{br}(\Z(\C_n))$ using its projective representation on the $\Ext$-space.

\textbf{Acknowledgments.}
We are grateful to Juan Cuadra, Alexei Davydov,  Pavel Etingof, and Bojana Femic
for helpful discussions and valuable comments.
The work of the second named author  was partially supported  by the NSA grant H98230-13-1-0236.

\section{Preliminaries}

\subsection{General conventions}

We work over  an algebraically closed field $k$ of characteristic $0$.
Recall that a $k$-linear abelian category $\A$ is {\em finite}  \cite{EO} if
 \begin{enumerate}
\item[(i)] $\A$ has finite dimensional spaces of morphisms;
\item[(ii)] every object of $\A$ has finite length;
\item[(iii)] $\A$ has {\em enough projectives}, i.e., every simple object of $\A$ has a projective cover; and
\item[(iv)] there are finitely many isomorphism classes of simple objects in $\A$.
\end{enumerate}
All categories considered in this paper will be  finite $k$-linear abelian.
Any such category  is equivalent to the category $\Rep(A)$ of finite dimensional representations of
a finite dimensional  $k$-algebra $A$.  All functors between such categories will be additive and $k$-linear.

In this paper we freely use basic results of the theory of finite
tensor categories and module categories over them \cite{EGNO},
the theory of braided  categories \cite{JS, DGNO, EGNO}, and the theory
of Hopf algebras \cite{M}.

For a Hopf algebra $H$ we denote by $\Delta,\, S,\,\eps$,  the
comultiplication, antipode, and counit of $H$, respectively. We
use Sweedler's notation for comultiplication, writing $\Delta(x)=
x\1 \ot x\2,\, x\in H$. We will write $\Delta^\op$ for the
opposite comultiplication, i.e., $\Delta^\op(x) =x\2\ot x\1$. All
Hopf algebras considered in this paper are assumed to be finite
dimensional.

For a positive integer $n$ we denote by $\Sym_n(k)$ the additive
group of symmetric bilinear forms on $k^n$. When it is convenient,  we will identify
$\Sym_n(k)$ with the group of  symmetric $n$-by-$n$ matrices. We will denote
by $I_n$ the identity $n$-by-$n$ matrix.

\subsection{Tensor categories and Hopf algebras}

By a {\em tensor category} we mean a  finite rigid tensor category
whose  unit object $\be$ is simple.


\begin{example} \label{Hopf algebra}
For  a finite dimensional Hopf algebra $H$
the  category $\Rep(H)$ of finite dimensional representations of
$H$ is a finite tensor category.  In general,  a tensor category
$\A$ is equivalent to the representation category of some Hopf
algebra  if and only if there exists a  fiber functor (i.e., an exact faithful
tensor functor) $F: \A \to \Vec$, where $\Vec$ is the tensor category
of $k$-vector spaces.
\end{example}

By a tensor {\em subcategory} $\A$ of a tensor category $\B$ we mean the
image of  a fully faithful tensor
functor (i.e., embedding) $\iota: \A \hookrightarrow \B$. When no confusion is possible,
we will simply write $\A \subset \B$ to denote a tensor subcategory.

\begin{proposition} \label{subcategories of Rep(H)}
Let $H$ be a finite-dimensional Hopf algebra. The set of  tensor
subcategories of $\Rep(H)$ is in bijection
with the set of classes of surjective Hopf
algebra homomorphisms $p: H \to K$ under the following equivalence relation:
two surjective homomorphisms $p:H\to K$ and $p':H\to K'$ are equivalent if there is a Hopf algebra isomorphism
$f:K\xrightarrow{\sim} K'$ such that $f\circ p =p'$.
\end{proposition}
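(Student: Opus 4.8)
The plan is to construct two mutually inverse maps between the two sets, using the forgetful fiber functor of $\Rep(H)$ together with the reconstruction criterion of Example~\ref{Hopf algebra}.

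\emph{From homomorphisms to subcategories.} Given a surjective Hopf algebra homomorphism $p\colon H\to K$, restriction of scalars along $p$ gives an exact tensor functor $p^*\colon\Rep(K)\to\Rep(H)$ sending $(V,\rho)$ to $(V,\rho\circ p)$. Since $p$ is surjective, any $H$-linear map between pullbacks $p^*V$ and $p^*W$ is automatically $K$-linear, so $p^*$ is fully faithful and its image is a tensor subcategory of $\Rep(H)$. If $f\colon K\xrightarrow{\sim}K'$ is a Hopf algebra isomorphism with $f\circ p=p'$, then $f^*$ is a tensor equivalence and $(p')^*=p^*\circ f^*$, so $p$ and $p'$ have the same image. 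Hence $p\mapsto p^*(\Rep(K))$ is well defined on equivalence classes.

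\emph{From subcategories to homomorphisms.} Let $\iota\colon\A\hookrightarrow\Rep(H)$ be a tensor subcategory and let $U\colon\Rep(H)\to\Vec$ be the forgetful functor. Then $U$ is exact, faithful and tensor, so $\omega:=U\circ\iota$ is a fiber functor on $\A$, and by Example~\ref{Hopf algebra} there is a tensor equivalence $\A\simeq\Rep(K)$ carrying $\omega$ to the forgetful functor, where $K:=\End(\omega)$ is the algebra of natural endomorphisms of $\omega$ equipped with the Hopf structure induced by the tensor structure of $\omega$. Recalling that $\End(U)\cong H$ for the whole category, restriction of a natural endomorphism of $U$ to the subcategory $\A$ defines a Hopf algebra homomorphism $p\colon H=\End(U)\to\End(\omega)=K$. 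By construction the $H$-module structure on $\iota(X)$ is the pullback along $p$ of the $K$-module structure on $X$, i.e. $\iota\simeq p^*$ under $\A\simeq\Rep(K)$. Applying full faithfulness of $\iota$ to the regular representation $K\in\Rep(K)\simeq\A$ gives $\End_H(p^*K)=\End_K(K)$; since $K$ is free as a module over the Hopf subalgebra $\Image(p)$ by the Nichols--Zoeller theorem, a dimension count then forces $\Image(p)=K$, i.e. $p$ is surjective.

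Finally I would check the two assignments are mutually inverse: starting from $p$, the forgetful fiber functor of $p^*(\Rep(K))$ reconstructs $K$ and returns $p$ up to the allowed isomorphism, while starting from $\A$ the identification $\iota\simeq p^*$ shows $p^*(\Rep(K))$ is exactly $\A$; the equivalence relation on homomorphisms matches precisely the isomorphism ambiguity of the reconstructed $K$. The main obstacle is the backward direction, namely identifying the abstractly reconstructed $K$ as a genuine quotient of $H$ and proving surjectivity of $p$. The cleanest packaging of this step is through the dual equivalence $\Rep(H)\simeq\mathrm{Comod}(H^*)$, under which tensor subcategories correspond to the Hopf subalgebras $C\subseteq H^*$ spanned by matrix coefficients of objects of $\A$, and these in turn, by finite-dimensional linear duality, to quotient Hopf algebras $K=C^*$ of $H$ with kernel the Hopf ideal $C^\perp$; surjectivity of $p$ is then the injectivity of the dual inclusion $C=K^*\hookrightarrow H^*$.
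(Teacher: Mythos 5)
Your proposal is correct and follows essentially the same route as the paper: restriction of scalars along $p$ in one direction, and Tannakian reconstruction $K=\End(F\circ\iota)$ with the restriction map $p\colon H=\End(F)\to\End(F\circ\iota)$ in the other, with the same treatment of the equivalence relation. The only real difference is that you supply a justification for the surjectivity of $p$ (via the Nichols--Zoeller dimension count, or dually via matrix coefficients), a step the paper's proof simply asserts.
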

\begin{proof}
Any surjective homomorphism $p: H \to K$ induces an embedding
\begin{equation}
\iota(p) : \Rep(K)\to \Rep(H): V \mapsto V_H,
\end{equation}
where $V_H=V$ and  $hv :=p(h)v$ for all $h\in H$ and $v\in V$.
The image of $\iota(p)$ consists of isomorphism classes of
representations of $H$ that factor through $p$
and so it does not change when $p$ is  composed with an isomorphism.

Conversely, let $\iota:\A \to \Rep(H)$ be a tensor embedding.  Let $F: \Rep(H)\to \Vec$
be the canonical forgetful tensor functor. By Tannakian formalism, $H\cong \End(F)$ and  $K=\End(F\circ\iota)$
is a Hopf algebra such that $\A$ is canonically equivalent to $\Rep(K)$. The natural map
$p: H=\End(F)\to \End(F\circ\iota)=K$ is a surjective homomorphism of Hopf algebras.  It is clear
that any embedding  $\iota':\A'\to  \Rep(H)$  with $\iota(\A)=\iota(\A')$ results
in a homomorphism $p':H \to K'$ equivalent to $p$.
\end{proof}

\begin{definition}
\label{trivializable def}
Let $\A$ be a tensor category and let $\mathcal{B}\subset \A$ be a
tensor subcategory. A tensor autoequivalence $\alpha$ of $\A$   is
called {\em trivializable} on $\mathcal{B}$ if the restriction
$\alpha |_\mathcal{B}$ is isomorphic to $\id_\mathcal{B}$ as a
tensor functor.
\end{definition}

We will denote by $\Aut(\A)$ (respectively, $\Aut(\A;\,
\mathcal{B})$) the group of isomorphism classes of tensor
autoequivalences of $\A$ (respectively, tensor autoequivalences of
$\A$ trivializable on $\mathcal{B}$).

Below we consider examples of tensor autoequivalences of $\A =\Rep(H)$,
where $H$ is a finite dimensional Hopf algebra.

\begin{example}
\label{twisted modules1}
A Hopf algebra automorphism  $a:H \xrightarrow{\sim} H$ gives rise to a tensor
autoequivalence $F_a \in \Aut(\Rep(H))$  such that for any $H$-module $V$ one has $F_a(V)=~V$
as a vector space with the action $h\ot v \mapsto a(h)\cdot v,\,h\in H,\, v\in V$.
\end{example}

\begin{definition}
\label{invariant cocycles}
An {\em invariant twist}   \cite{Da}  on $H$  is
an invertible element $J \in H\ot H$ such that  $J\Delta(h) =\Delta(h)J$ for all $h\in H$
and
\[
(J \ot 1)\,(\Delta\ot \id)(J) = (1\ot J)\, (\id\ot \Delta)(J).
\]

An {\em invariant $2$-cocycle} on $H$  \cite{BC} is a convolution
invertible linear map $\mu:H\ot H\to k$ such that  $\mu(x\1 \ot
y\1) x\2 y\2 = \mu(x\2 \ot y\2) x\1 y\1$ and
\[
\mu(x\1 \ot y\1) \mu (x\2 y\2 \ot z) = \mu (y\1 \ot z\1) \mu(x \ot y\2 z\2),
\]
for all $x,\,y,\,z\in H$.
\end{definition}

\begin{remark}
\begin{enumerate}
\item[(i)] If $\sigma : H\ot H\to k$ is an invariant $2$-cocycle
on $H$ then $\sigma^*$ (viewed as an element of $H^*\ot H^*$) is
an invariant twist on $H^*$. Similarly, an invariant twist on $H$
gives rise to an invariant $2$-cocycle on $H^*$
\item[(ii)] Invariant twists on $H$ (respectively, invariant $2$-cocycles on
$H$) form a group under multiplication (respectively, convolution).
\end{enumerate}
\end{remark}

Let $u: H \to k$ be a convolution invertible linear map such that $u(x\1)x\2 = x\1 u(x\2)$ for
all $x\in H$ (i.e., $u$ is a central element of $H^*$). Then
\begin{equation}
\label{Ju}
\mu_u(x\ot y) =  u^{-1}(x\1)\,u^{-1}(y\1)\, u(x\2 \ot y\2),\qquad x,\,y\in H,
\end{equation}
is an invariant $2$-cocycle on $H$. The quotient of the group of
invariant $2$-cocycles on $H$ by the subgroup of  $2$-cocycles of
the form \eqref{Ju} is called the {\em second invariant cohomology
group} of $H$ and is denoted by $\Hinv(H)$.

\begin{example}
\label{twisted modules2}
Any  invariant twist $J \in H\ot H$  gives rise
to  a tensor autoequivalence $F_J \in \Aut(\Rep(H))$ such  that $F_J=\id_{\Rep(H)}$
as an additive functor with  the tensor structure  given by
\begin{equation}
\label{J VU}
J_{V,U} : V\ot U \to V \ot U :  v \ot u \mapsto J(v\ot u),
\end{equation}
where $V,\,U$ are $H$-modules, $v\in V,\,u\in U$.

In a dual way, an invariant  $2$-cocycle $\sigma : H\ot H \to k$
gives rise to a tensor autoequivalence $F_{\sigma}$ of $\Rep(H^*)$
that is isomorphic to $\id_{\Rep(H^*)}$ as an additive functor. If
we view $\Rep(H^*)$ as the category of right $H$-comodules then
the tensor  structure of $F_{\sigma}$  is given by
\begin{equation}
\label{sigma VU} \sigma_{V, U} : V \ot U \to V \ot U: v \ot u
\mapsto \sigma(v_{(1)}\ot u_{(1)}) v_{(0)} \ot u_{(0)},
\end{equation}
where $V,\,U$ are $H$-comodules, $v\in V,\,u\in U$, and $v\mapsto  v_{(0)}\ot v\1,\,
u\mapsto  u_{(0)}\ot u\1$ denote the comodule maps.

Let $\sigma_1,\,\sigma_2$ be invariant $2$-cocycles of $H$. Then $F_{\sigma_1}$ and $F_{\sigma_2}$
are isomorphic autoequivalences of $\Rep(H)$ if and only if $\sigma_1$ and $\sigma_2$ determine
the same class in $\Hinv(H)$.
\end{example}

Let $\C$ be a braided tensor  category with braiding $c_{X,Y}: X
\ot Y \xrightarrow{\sim} Y \ot X$.  For a tensor subcategory $\D
\subset \C$ its {\em centralizer}  $\D'$ is the full tensor subcategory
of $\C$ consisting of  objects $Y$ such that $c_{YX}\circ c_{XY}
=\id_{X\ot Y}$ for all objects $X$ in $\C$ \cite{Mu}.

A braided tensor category $\C$ is called
{\em symmetric} if $\C=\C'$, i.e., if
\[
c_{YX}\circ c_{XY} =\id_{X\ot Y}
\]
for all objects $X,\,Y$ in $\C$.

\begin{remark}
\label{Deligne}
By the result of Deligne \cite{De} any finite symmetric tensor category
is equivalent to the representation category of a finite supergroup.
It was explained in \cite{AEG} that any such a category can be realized as
the representation category of a modified supergroup Hopf algebra $\wedge V \rtimes kG$,
where $G$ is a finite group with a fixed central  element $u$ such that $u^2=1$ and
$V$ is a finite dimensional representation of $G$ on which $u$ acts by $-1$.
The coalgebra structure of $\wedge V \rtimes kG$ is determined by
\[
\Delta(g)=g\ot g,\,\eps(g)=1,\, g\in G, \qquad \Delta(v)= 1\ot v + v\ot  u,\,\eps(v)=0,\, v\in V,
\]
and the antipode is given by $S(g)=g^{-1},\, S(v)=-v$. This category is semisimple if and only if $V=0$.

In this paper we deal with non-semisimple symmetric tensor categories corresponding to
$G=\mathbb{Z}/2\mathbb{Z}$. These are precisely the finite symmetric categories  without
non-trivial Tannakian subcategories.  The corresponding Hopf algebras are described in Section~\ref{En section}
below.

Any finite symmetric tensor category has a unique, up to isomorphism, super-fiber functor
(i.e., a braided tensor functor to the category $\sVec$ of super vector spaces). This functor is identified with
the forgetful tensor functor
\[
\Rep(\wedge V \rtimes kG)\to \sVec.
\]
\end{remark}


\begin{example}
Let $\C= \Rep(H)$.  It is well known that braidings on $\C$ are in
bijection with  {\em quasi-triangular structures} on
$H$. By definition, such a structure on $H$ is an invertible
element $R \in H\ot H$ such that $\Delta^{\op}(h)=R\Delta(h)R^{-1}$
for all $h\in H$ and
\[
(\Delta\ot \id)(R) = R^{13}R^{23},\qquad (\id \ot \Delta)(R) = R^{13}R^{12}.
\]
Here   we denote  $R^{12}=R\ot 1,\, R^{23} =1\ot R$ and $R^{13} =
\sum_i\, a_i \ot 1 \ot b_i$, where  $R=\sum_i \, a_i\ot b_i$.

A quasi-triangular structure is {\em triangular} if $R_{21}=R^{-1}$.
Triangular structures are in bijection with symmetric braidings on $\Rep(H)$.
\end{example}

For a braided tensor category $\C$ let  $\Aut^{br}(\C)$ denote the
group of isomorphism classes of braided autoequivalences of $\C$.

\subsection{The center of a tensor category}
\label{the center section}

For any tensor  category $\A$ let $\Z(\A)$ denote its {\em center}.  Recall that the objects of $\Z(\A)$ are pairs
$(Z,\,\gamma)$ where $Z$ is an object of $\A$  and $\gamma=\{\gamma_X\}_{X\in \A}$, where
\begin{equation}
\label{gX}
\gamma_X: X\ot Z \xrightarrow{\sim} Z\ot X,
\end{equation}
is a natural isomorphism   satisfying certain compatibility conditions.  We will usually simply write $Z$ for $(Z,\,\gamma)$.
Morphisms  and tensor product in $\Z(\C)$ are defined in an obvious way.

\begin{example}
\label{YD as center} The center of the tensor category
$\C=\Rep(H)$, where $H$ is a Hopf algebra, is equivalent to
$\Rep(D(H))$, where the Hopf algebra $D(H)$ is the {\em Drinfeld
double} of $H$. As a coalgebra,
\[
D(H) = H^{*\text{cop}} \ot H,
\]
where $H^{*\text{cop}}$ denotes the co-opposite dual Hopf algebra
of $H$.  The algebra structure of $D(H)$ is given by
\begin{equation} \label{multiplication in D(H)}
(p \ot h) (p' \ot h') = p \Big( h\1 \rightharpoonup p'
\leftharpoonup S^{-1} (h\3) \Big) \ot h\2 h'
\end{equation}
for all $p$, $p' \in H^{*}$, $h$, $h' \in H$, where $(h
\rightharpoonup p \leftharpoonup g) (x) = p (g x h)$, for all $h$,
$g$, $x \in H$, $p \in H^{*}$.   There is a canonical quasi-triangular
structure on $D(H)$:
\[
\mathcal{R} =\sum_i\, (1\ot e_i) \ot (f_i \ot 1),
\]
where $\{e_i\}$ and $\{f_i\}$ are dual bases of $H$ and $H^*$. This
quasitriangular structure
corresponds to braiding \eqref{gX} of $\Z(\Rep(H))$.

It is well known that
representations of $D(H)$ (i.e., objects of $\Z(\Rep(H))$)
can be described in terms of {\em Yetter-Drinfeld modules} over
$H$. That is, any such representation $Z$ has structures of a left
$H$-module $h\ot x \mapsto h\cdot x$ and a right $H$-comodule
$\delta: Z \to Z \ot H,\, \delta(x)= x\0 \ot x\1,\,h\in H,\, x\in
Z$ satisfying the following compatibility condition:
\[ \delta(h \cdot x)= h\2 \cdot
x\0 \ot h\3 x\1 S^{-1}(h\1),\qquad h\in H,\, x\in Z.
\]
Let $_{H} \mathcal{YD}^{H}$ denote the tensor category of
Yetter-Drinfeld modules over $H$.
The central structure \eqref{gX}
for  the Yetter-Drinfeld module $Z$ is given by
\begin{equation}
\label{central structure}
\gamma_V: V\ot Z \to Z \ot V,\qquad
\gamma_V (v \ot x) = x\0 \ot x\1 \cdot v,\qquad v \in V,\, x\in Z.
\end{equation}
\end{example}

\subsection{The Brauer-Picard group of a tensor category}
\label{BrPic section}

Let $\A$ be a finite tensor category. The notion of a
tensor product $ \bt_\A$ of $\A$-bimodule categories was
introduced in \cite{ENO}.  With respect to this product
the equivalence classes of $\A$-bimodule categories form a monoid. The
unit of this monoid is the regular $\A$-bimodule category $\A$.
An $\A$-bimodule category $\M$ is called {\em invertible} if there
is an $\A$-bimodule category $\N$ such that $\M \bt_\A \N \cong
\A$ and $\N \bt_\A \M \cong \A$.  By definition, the {\em
Brauer-Picard} group of $\A$ is the group $\BrPic(\A)$ of
equivalence classes of invertible $\A$-bimodule categories.

The Brauer-Picard group is an important invariant of a tensor
category. It is used, in particular, in the classification of
extensions of tensor  categories \cite{ENO}.

\subsection{$\Ext$-spaces and actions of autoequivalences on them}
\label{ext prelim section}

Let $\A$ be a  $k$-linear abelian category  and let $V,\, U$ be objects of $\A$.
An {\em extension} of $U$ by $V$ is a short exact sequence
\begin{equation}
\label{an extension}
0\to V \xrightarrow{i} E \xrightarrow{p} U \to 0.
\end{equation}
We will consider extensions up to the following equivalence relation. Two extensions
$0\to V \xrightarrow{i} E \xrightarrow{p} U \to 0$ and $0\to V \xrightarrow{i'} E' \xrightarrow{p'} U \to 0$
are {\em equivalent} if there is an isomorphism $\phi:E\xrightarrow{\sim}E'$ such that the following diagram
commutes:
\begin{equation}
\xymatrix{
0 \ar[r] & V \ar[r]^{i} \ar@{=}[d]  & E \ar[d]^{\phi}   \ar[r]^{p} & U \ar@{=}[d] \ar[r] & 0 \\
0 \ar[r] & V \ar[r]^{i'}  & E'    \ar[r]^{p'}  & U  \ar[r] & 0.
}
\end{equation}
Equivalences classes of extensions form a $k$-vector space
$\Ext_\A^1(U,\,V)$. The addition is given by the usual operation
of the Baer sum.  The zero element of $\Ext_\A^1(U,\,V)$ is the
split extension $0\to V \to V \oplus U \to
U \to 0$. For $\lambda \in k^\times$ the $\lambda$-multiple of the
class of extension \eqref{an extension} is  the class of the
extension
\[
0\to V \xrightarrow{\lambda^{-1}\,i} E \xrightarrow{p} U \to 0.
\]
Furthermore,  the  extension $0\to V
\xrightarrow{\lambda\,i} E \xrightarrow{\mu\,p} U \to 0$, where
$\lambda,\, \mu \in k^\times$, is equivalent to $0\to V
\xrightarrow{\lambda\mu\,i} E \xrightarrow{p} U \to 0$.

\begin{remark}
\label{skew-prims}
Let $H$ be a Hopf algebra and $\gamma$ and $\eta$ be two
1-dimensional representations of $H$. If $\textnormal{P}_{\gamma,
\eta} (H^{*})$ denotes the set of $(\gamma, \eta)$-primitive
elements of $H^{*}$, i.e. elements $\xi \in H^{*}$ such that
$\Delta_{H^{*}} (\xi) = \gamma \ot \xi + \xi \ot \eta$, then
$$
\Ext_{\Rep(H)}^1(\eta, \gamma) \cong \textnormal{P}_{\gamma, \eta}
(H^{*}) / k (\gamma - \eta).
$$
Indeed, let $0 \to \gamma \xrightarrow{i} E \xrightarrow{p} \eta
\to 0$ be an extension of $\eta$ by $\gamma$ and let $e_{1}$,
$e_{2} \in E$ be such that $e_{1} = i(1)$ and $p(e_{2}) = 1$. Then
$\{e_{1}, e_{2}\}$ is a $k$-basis of $E$ such that $h \cdot e_{1}
= \gamma (h) e_{1}$ and $h \cdot e_{2} = \xi(h) e_{1} + \eta (h)
e_{2}$, for all $h \in H$ and some $\xi \in H^{*}$. In fact, $\xi
\in \textnormal{P}_{\gamma, \eta} (H^{*})$, since
$$
(hl) \cdot e_{2} = h \cdot (\xi(l) e_{1} + \eta(l) e_{2}) =
(\gamma(h) \xi(l) + \xi(h) \eta (l))e_{1} + \eta (hl) e_{2}
$$
implies that $\xi (hl) = \gamma(h) \xi(l) + \xi(h) \eta (l)$, for
all $h$, $l \in H$, whence $\Delta(\xi) = \gamma \ot \xi + \xi \ot
\eta$. If $e'_{2}$ and $\xi'$ are such that $p (e'_{2}) = 1$ and
$h \cdot e'_{2} = \xi'(h)e_{1} + \eta(h) e'_{2}$ then $\xi' - \xi
\in k (\gamma - \eta)$. Indeed, there exists $a \in k$ such that
$e_{2} - e'_{2} = a e_{1}$ and action of $h \in H$ on this
relation yields $\xi - \xi' = a (\gamma - \eta)$. Similarly, the
equivalence class of $\xi$ modulo $k (\gamma - \eta)$ remains the
same if we pass to an extension equivalent to $E$. Thus, the map
$\Ext_{\Rep(H)}^1(\eta, \gamma) \ni [E] \mapsto \widehat{\xi} \in
\textnormal{P}_{\gamma, \eta} (H^{*}) / k (\gamma - \eta)$,
where $\widehat{\xi}$  denotes the class of $\xi$, is well
defined and is easily seen to be a $k$-vector space isomorphism.
\end{remark}

\begin{proposition}
\label{action on Ext}
Let $\A$ be a tensor category and
let $U,\,V$ be simple objects of $\A$ such that $\alpha(V)=V$ and $\alpha(U)=U$ for all  tensor autoequivalences
$\alpha:\A \to \A$.  Then isomorphisms
\begin{equation}
\label{alpha on ext} \rho(\alpha): \Ext_\A^1(U,\,V)
\xrightarrow{\sim} \Ext_\A^1(\alpha(U),\,\alpha(V)) =
\Ext_\A^1(U,\,V),
\end{equation}
where the image of extension \eqref{an extension} under $\rho(\alpha)$ is
\[
0\to V \xrightarrow{\alpha(i)} \alpha(E) \xrightarrow{\alpha(p)} U \to 0,
\]
gives rise to a projective representation of $\Aut(\A)$ on $\Ext_\A^1(U,\,V)$.
\end{proposition}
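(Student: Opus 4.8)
The plan is to verify first that each $\rho(\alpha)$ is a well-defined linear automorphism of $\Ext^1_\A(U,V)$, and then to analyze the precise way in which $\alpha \mapsto \rho(\alpha)$ fails to be a strict homomorphism, since that failure is exactly what produces the projectivity. Since $\alpha$ is a tensor autoequivalence it is in particular an exact additive equivalence of abelian categories, so it carries the short exact sequence \eqref{an extension} to a short exact sequence $0 \to \alpha(V) \to \alpha(E) \to \alpha(U) \to 0$. Because $U$ and $V$ are simple and fixed by every autoequivalence, Schur's lemma supplies isomorphisms $\psi^\alpha_V : \alpha(V) \xrightarrow{\sim} V$ and $\psi^\alpha_U : \alpha(U) \xrightarrow{\sim} U$, each unique up to a scalar in $k^{\times}$; transporting $\alpha(E)$ along these identifications returns an extension of $U$ by $V$ and defines $\rho(\alpha)$. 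To see $\rho(\alpha)$ is $k$-linear I would use that an equivalence preserves biproducts, pullbacks, and pushouts, hence commutes with the diagonal/codiagonal construction of the Baer sum, so $\rho(\alpha)$ is additive; compatibility with scalar multiplication follows at once from the description of the $\lambda$-multiple recalled above, since $\alpha(\lambda^{-1}\,i) = \lambda^{-1}\alpha(i)$. As $\alpha$ is invertible, $\rho(\alpha) \in \GL(\Ext^1_\A(U,V))$.

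The heart of the argument is the comparison of $\rho(\alpha)\rho(\beta)$ with $\rho(\alpha\beta)$ for two autoequivalences $\alpha,\beta$. Unwinding the definitions, both assign to the class of \eqref{an extension} a class built on the same object $\alpha\beta(E)$, but with inclusion and projection transported along a priori different identifications of $\alpha\beta(V)$ and $\alpha\beta(U)$ with $V$ and $U$: the composite $\rho(\alpha)\rho(\beta)$ uses $\psi^\alpha_V \circ \alpha(\psi^\beta_V)$ and $\psi^\alpha_U \circ \alpha(\psi^\beta_U)$, whereas $\rho(\alpha\beta)$ uses $\psi^{\alpha\beta}_V$ and $\psi^{\alpha\beta}_U$. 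Since these are isomorphisms between the simple objects $\alpha\beta(V)\cong V$ and $\alpha\beta(U)\cong U$, Schur's lemma yields scalars $\lambda_V(\alpha,\beta),\,\lambda_U(\alpha,\beta)\in k^{\times}$ with $\psi^\alpha_V \circ \alpha(\psi^\beta_V) = \lambda_V(\alpha,\beta)\,\psi^{\alpha\beta}_V$ and likewise for $U$. Feeding these scalars into the rule recalled above — that rescaling $i$ by $\lambda$ and $p$ by $\mu$ multiplies the class by $(\lambda\mu)^{-1}$ — shows that $\rho(\alpha)\rho(\beta)$ and $\rho(\alpha\beta)$ differ by the single scalar $\lambda_V(\alpha,\beta)\,\lambda_U(\alpha,\beta)^{-1}\in k^{\times}$, independently of the chosen extension.

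To finish I would record that $\rho(\id) = \id$, that isomorphic autoequivalences yield the same operator up to a scalar (running the same Schur comparison with a natural isomorphism $\alpha \Rightarrow \alpha'$ in place of the $\psi$'s), so that $\alpha \mapsto \rho(\alpha)$ descends to a well-defined map $\Aut(\A) \to \PGL(\Ext^1_\A(U,V))$, and that the scalar factor obeys the $2$-cocycle identity, which is forced by the associativity of composition of functors. Together these statements say precisely that $\rho$ is a projective representation. I expect the only genuine obstacle to be the bookkeeping: carefully tracking how the Schur scalars coming from the non-canonical identifications $\alpha(V)\cong V$ and $\alpha(U)\cong U$ propagate through the transport of extensions, as this is exactly the mechanism responsible for $\rho$ being projective rather than linear.
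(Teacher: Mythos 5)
Your proof is correct, but it distributes the work differently from the paper's own argument. The paper exploits the literal equalities $\alpha(V)=V$ and $\alpha(U)=U$: with those, \eqref{alpha on ext} involves no identifications at all, and $\rho$ is \emph{strictly} multiplicative on actual functors, since $\rho(\alpha\circ\beta)$ and $\rho(\alpha)\circ\rho(\beta)$ send a class to the literally identical extension $0\to V \xrightarrow{\alpha\beta(i)} \alpha\beta(E) \xrightarrow{\alpha\beta(p)} U \to 0$. Hence the only thing the paper verifies is that the map descends from functors to their isomorphism classes (the elements of $\Aut(\A)$): given a tensor isomorphism $\phi:\alpha\to\alpha'$, naturality yields a commuting ladder whose outer vertical arrows $\phi_V,\phi_U$ are nonzero scalars by Schur's lemma, so $\rho(\alpha)$ and $\rho(\alpha')$ differ by $\phi_V\phi_U^{-1}$; this is exactly the final paragraph of your argument. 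Your route instead discards the equalities, chooses Schur identifications $\psi^\alpha_V,\psi^\alpha_U$, and then must (and does, correctly) track the scalar $\lambda_V(\alpha,\beta)\,\lambda_U(\alpha,\beta)^{-1}$ measuring the failure of these identifications to compose, so for you the projectivity lives in the composition law, while for the paper it lives entirely in the descent to isomorphism classes. What your extra bookkeeping buys is generality: your argument proves the proposition under the weaker hypothesis $\alpha(V)\cong V$, $\alpha(U)\cong U$, which is the form one actually wants in light of the paper's remark that the equalities are only achieved after replacing $\A$ by a skeletal model. What the paper's version buys is brevity and a canonical, choice-free representative in $GL(\Ext_\A^1(U,\,V))$ for each actual functor.
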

\begin{proof}
Let $\alpha,\, \alpha':\A \to A$ be tensor autoequivalences  and let $\phi:\alpha\to \alpha'$ be a tensor
isomorphism between them (so that $\alpha,\,\alpha'$ determine the same element of $\Aut(\A)$).
We have an isomorphism of extensions:
\begin{equation*}
\xymatrix{
0 \ar[r] & V \ar[r]^{\alpha(i)} \ar^{\phi_V}[d]  & \alpha(E) \ar[d]^{\phi_E}   \ar[r]^{\alpha(p)} & U \ar^{\phi_U}[d] \ar[r] & 0 \\
0 \ar[r] & V \ar[r]^{\alpha'(i)}  & \alpha'(E) \ar[r]^{\alpha'(p)}
& U  \ar[r] & 0, }
\end{equation*}
where $\phi_V,\,\phi_U$ are non-zero scalars and $\phi_E$ is an isomorphism. Thus, the equivalence classes of
$\rho(\alpha)$ and $\rho(\alpha')$ differ by the scalar $\phi_V\phi_U^{-1}$. Hence, the map
\[
\rho:\Aut(\A)\to PGL(\Ext_\A^1(U,\,V))
\]
is well defined. It is clear that this map is a group homomorphism.
\end{proof}

\begin{remark}
Presence of equalities rather than isomorphisms in  the hypothesis
of Proposition~\ref{action on Ext} may look unnatural. This can be resolved by replacing $\A$ with
an equivalent skeletal category (i.e., a category in which isomorphic objects are equal).
In any event, we will only  apply  Proposition~\ref{action on Ext}  in a simple situation when $U,\, V$
are one-dimensional representations (i.e., linear characters) of a Hopf algebra.
\end{remark}

\section{Braided autoequivalences of the center}

\subsection{Isomorphism $\BrPic(\A) \simeq \Aut^{br}(\Z(\A))$}


Let $\A$ be a tensor  category and let $\M$ be an invertible $\A$-bimodule category.
One assigns to $\M$ a braided autoequivalence $\Phi_\M$ of $\Z(\A)$ as follows.
Note that  $\Z(\A)$ can be identified with the category of $\A$-bimodule endofunctors of $\M$ in two ways:
via the functors $Z \mapsto Z\ot -$ and $Z \mapsto - \ot Z$.  Define $\Phi_\M$ in such a way that
there is an isomorphism of $\A$-bimodule functors
\begin{equation}
\label{PhiM}
\Phi_\M(Z) \ot - \cong -\ot Z
\end{equation}
for all $Z\in \Z(\A)$.

The following result was established in \cite{ENO, DN}.

\begin{theorem}
\label{BrPic=Aut-br}
Let $\A$ be a tensor  category. The assignment

\begin{equation}
\label{main iso}
\Phi: \BrPic(\A) \xrightarrow{\sim} \Aut^{br}(\Z(\A)): \M \mapsto \Phi_\M
\end{equation}
is an isomorphism.
\end{theorem}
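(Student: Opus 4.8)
The plan is to realize $\Phi$ as the map induced on isomorphism classes by an equivalence of categorical $2$-groups, but to keep the argument concrete I would proceed in three stages: well-definedness of $\Phi_\M$ as a braided autoequivalence, the homomorphism property, and bijectivity.

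First I would make precise the two identifications of $\Z(\A)$ with the monoidal category $\Fun_{\A|\A}(\M,\M)$ of $\A$-bimodule endofunctors of $\M$ (under composition). Since $\M$ is invertible, Morita theory provides a monoidal equivalence $\Fun_{\A|\A}(\M,\M)\simeq\Z(\A)$, and there are two natural such equivalences, $L\colon Z\mapsto(Z\ot-)$ and $R\colon Z\mapsto(-\ot Z)$, where in each case the half-braiding of $Z$ supplies the required bimodule-functor structure. I would then set $\Phi_\M:=L^{-1}\circ R$, so that $\Phi_\M(Z)\ot-\cong-\ot Z$ exactly as in \eqref{PhiM}. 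Well-definedness has two parts: $\Phi_\M$ is an equivalence because $L$ and $R$ are (this is precisely where invertibility of $\M$ is used), and $\Phi_\M$ is \emph{braided}, which I would verify by tracking the half-braidings through the composition product of $\Fun_{\A|\A}(\M,\M)$ and checking that the image of the braiding of $\Z(\A)$ under $L$ matches its image under $R$. This is a convention-heavy but routine computation.

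Second, the homomorphism property $\Phi_{\M\bt_\A\N}\cong\Phi_\M\circ\Phi_\N$, together with $\Phi_\A\cong\id$, follows from the universal property $\Phi_\bullet(Z)\ot-\cong-\ot Z$. Given $Z\in\Z(\A)$, I would chain the defining isomorphisms across the two tensor factors: on $\N$ one has $\Phi_\N(Z)\ot-\cong-\ot Z$, and on $\M$ one has $\Phi_\M(\Phi_\N(Z))\ot-\cong-\ot\Phi_\N(Z)$; applied to a pair $m\bt n$ and composed, these exhibit $\Phi_\M\circ\Phi_\N$ as satisfying the same universal property that characterizes $\Phi_{\M\bt_\A\N}$, whence the two autoequivalences are isomorphic. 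Naturality and compatibility with the braiding are inherited from the constituent isomorphisms.

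The real content, and the step I expect to be the main obstacle, is bijectivity. I would construct an explicit quasi-inverse $\Psi\colon\Aut^{br}(\Z(\A))\to\BrPic(\A)$. For $F\in\Aut^{br}(\Z(\A))$, let $\M_F$ be the underlying category $\A$ equipped with its standard left $\A$-action and with the right action twisted by $F$: concretely, the constraint governing the interchange of the left and right actions — which for the regular bimodule is assembled from the tautological half-braidings — is replaced by its transport through $F$. The crux is to verify that $\M_F$ is genuinely \emph{invertible} and that both round trips $\Phi_{\M_F}\cong F$ and $\M_{\Phi_\M}\cong\M$ hold. Equivalently, one establishes injectivity (if $\Phi_\M\cong\id$ then $L\cong R$, forcing $\M$ to be equivalent to the regular bimodule $\A$) together with surjectivity (every $F$ arises as some $\Phi_{\M_F}$). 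This is exactly where the coherence bookkeeping of \cite{ENO, DN} is needed: one must lift these pointwise statements to the full bimodule and braided structure, matching associativity and module constraints on both sides, and this is the delicate part of the proof. Once injectivity and surjectivity are in place, combined with the homomorphism property of the previous step, $\Phi$ is an isomorphism of groups.
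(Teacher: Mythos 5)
A preliminary remark: the paper does not actually prove Theorem~\ref{BrPic=Aut-br}; it only records the defining property \eqref{PhiM} of $\Phi_\M$ and cites \cite{ENO, DN}, so your proposal can only be measured against the proofs in those references. Your first two stages are consistent with them: setting $\Phi_\M = L^{-1}\circ R$ is exactly how \eqref{PhiM} is to be read (modulo the point, which your ``tracking'' remark implicitly covers, that $R$ is a priori tensor-reversing and only becomes monoidal after the half-braidings are inserted), and your derivation of $\Phi_{\M\bt_\A\N}\cong\Phi_\M\circ\Phi_\N$ from the characterizing property together with the $\A$-balanced structure of $\bt_\A$ is correct.

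The genuine gap is in the third stage, which you yourself identify as the real content. First, your quasi-inverse $\M_F$ is not well-defined: $F$ is an autoequivalence of $\Z(\A)$, not of $\A$, so there is nothing in the regular bimodule to ``transport through $F$.'' A bimodule category carries no interchange datum valued in the center: the constraint relating the two actions on the regular bimodule is $(a\ot m)\ot b\cong a\ot(m\ot b)$, i.e., just the associator of $\A$, and the ``tautological half-braidings'' you invoke parametrize the bimodule \emph{endofunctors} of $\A$ (this is the identification $\Fun_{\A|\A}(\A,\A)\cong\Z(\A)$), not the bimodule structure of $\A$ itself; hence there is no datum on which $F$ can act. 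Second, your injectivity argument is circular: that $\Phi_\M\cong\id$ (equivalently $L\cong R$) forces $\M\cong\A$ as bimodule categories is precisely the hard statement to be proved, and you give no mechanism extracting a bimodule equivalence $\A\simeq\M$ from an isomorphism of functors $\Z(\A)\to\Fun_{\A|\A}(\M,\M)$. The proofs in \cite{ENO, DN} take a different route: one first identifies $\BrPic(\A)$ with $\Pic(\Z(\A))$ using the theory of module categories over $\A\boxtimes\A^{\rev}$ (whose dual category with respect to $\A$ is $\Z(\A)$), and then matches invertible $\Z(\A)$-module categories with braided autoequivalences; there, surjectivity is obtained by attaching to $F$ the category of modules over the image under $F$ of the canonical Lagrangian algebra $I_\A(\be)\in\Z(\A)$ --- the same mechanism that appears in Proposition~\ref{Image of Gamma} of this paper --- and injectivity by algebra-theoretic arguments, not by formal manipulation of the universal property. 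So what you defer to ``coherence bookkeeping'' in \cite{ENO, DN} is not bookkeeping for your construction; it is a different construction altogether, and without it your proposal covers only the easy half of the theorem.
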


\begin{example}
Let $A$ be a finite Abelian group. Then Theorem~\ref{BrPic=Aut-br} implies that
\begin{equation}
\BrPic(\Vec_A)  \cong O(A \oplus \widehat{A},\, q),
\end{equation}
where $O(A \oplus \widehat{A},\,q)$
is the group of automorphisms of $A  \oplus \widehat{A}$ preserving the
canonical quadratic form
\[
q(a,\,\chi) =\chi(a),\qquad a\in A,\, \chi\in \widehat{A}.
\]
\end{example}

Let $\C$ be a braided tensor category with braiding $c_{X,Y}: X\ot Y \xrightarrow{\sim} Y \ot X$.
Then  $\C$ is embedded into $\Z(\C)$ via
\[
X\mapsto (X,\, c_{-,X}).
\]
In what follows we will identify $\C$ with a tensor  subcategory of $\Z(\C)$ (the image of this embedding).
The braiding of $\C$ allows to view
left $\C$-module categories  as $\C$-bimodule categories (analogously to how modules over a commutative
ring can be viewed as bimodules).  Invertible left $\C$-module categories form a subgroup
$\Pic(\C) \subset \BrPic(\C)$ called the {\em Picard} group of $\C$.

\begin{remark}
It follows from \cite{ENO} that
\[
\BrPic(\C) \cong \Pic(\Z(\C)).
\]
\end{remark}

Let $\Aut^{br}(\Z(\C);\, \C) \subset \Aut^{br}(\Z(\C))$ be the
subgroup consisting of braided autoequivalences of $\Z(\C)$ that
restrict to a trivial autoequivalence of $\C$. The following
result was established in \cite{DN}.

\begin{theorem}
\label{image of Pic}
The image of $\Pic(\C)$ under isomorphism \eqref{main iso} is  $\Aut^{br}(\Z(\C);\, \C)$.
\end{theorem}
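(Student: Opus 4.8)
The plan is to establish the two inclusions $\Phi(\Pic(\C)) \subseteq \Aut^{br}(\Z(\C);\,\C)$ and $\Aut^{br}(\Z(\C);\,\C) \subseteq \Phi(\Pic(\C))$ directly from the description of $\Phi_\M$ in \eqref{PhiM}. Fix an invertible $\C$-bimodule category $\M$. As recalled before Theorem~\ref{BrPic=Aut-br}, the center $\Z(\C)$ is identified with the category $\Fun_{\C\mid\C}(\M,\M)$ of $\C$-bimodule endofunctors of $\M$ in two ways, via left multiplication $L\colon Z \mapsto (m \mapsto Z\lact m)$ and via right multiplication $R\colon Z \mapsto (m \mapsto m\ract Z)$, where in each case the bimodule structure on the functor is built from the half-braiding of $Z$ together with the associativity constraints of $\M$. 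By definition $\Phi_\M = L^{-1}\circ R$, so that $L_{\Phi_\M(Z)} \cong R_Z$. Consequently $\Phi_\M$ preserves the canonical subcategory $\C = \{(X,\,c_{-,X})\} \subset \Z(\C)$ and restricts to $\id_\C$ there precisely when, for every $X \in \C$, the endofunctors $L_X$ and $R_X$ are isomorphic as bimodule functors compatibly with the half-braiding $c_{-,X}$. Thus the theorem reduces to the assertion that this condition holds if and only if $\M$ arises from an invertible left $\C$-module via the braiding.

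For $\Phi(\Pic(\C)) \subseteq \Aut^{br}(\Z(\C);\,\C)$, let $\M$ be obtained from a left $\C$-module category by declaring the right action to be $m \ract X := X \lact m$, with the middle associativity constraint $(a \lact m)\ract X \cong a \lact (m \ract X)$ supplied by the braiding $c_{a,X}$. Then $R_X$ and $L_X$ have literally the same underlying endofunctor $m \mapsto X \lact m$, and I would check that under this identification the two bimodule structures differ exactly by the half-braiding $c_{-,X}$ of $X$, the required coherence being the hexagon axiom for $c$. This produces an isomorphism $\Phi_\M(X) \cong (X,\,c_{-,X})$ in $\Z(\C)$, natural and monoidal in $X$, so $\Phi_\M|_\C \cong \id_\C$ as a tensor functor; braided compatibility is automatic since $\Phi_\M$ is braided. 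Hence $\Phi_\M \in \Aut^{br}(\Z(\C);\,\C)$.

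For the reverse inclusion, take $\alpha \in \Aut^{br}(\Z(\C);\,\C)$ and use Theorem~\ref{BrPic=Aut-br} to write $\alpha = \Phi_\M$ for an invertible bimodule $\M$. The hypothesis $\Phi_\M|_\C \cong \id_\C$ furnishes, for each $X \in \C$, an isomorphism of bimodule endofunctors $\theta_X\colon R_X \xrightarrow{\sim} L_X$ that is natural and monoidal in $X$ and compatible with the half-braidings $c_{-,X}$. Evaluating $\theta_X$ on objects of $\M$ yields natural isomorphisms $m \ract X \cong X \lact m$, and I would verify that the monoidality of $\{\theta_X\}$ together with the half-braiding condition makes $m \ract X := X \lact m$ into a genuine left $\C$-module structure on $\M$ (the module pentagon becoming the hexagon for $c$) whose braiding-induced bimodule structure recovers the original $\M$ up to equivalence. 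Since $\M$ is invertible as a bimodule, its inverse is produced the same way from the dual left module, so $\M$ is invertible as a left $\C$-module and $\M \in \Pic(\C)$, giving $\alpha \in \Phi(\Pic(\C))$.

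The main obstacle is the half-braiding bookkeeping suppressed above: one must show that the isomorphism of underlying endofunctors $L_X \cong R_X$ genuinely lifts to an isomorphism $\Phi_\M(X) \cong (X,\,c_{-,X})$ in $\Z(\C)$ — that is, that it intertwines the half-braidings rather than merely the underlying objects — and, conversely, that the family $\{\theta_X\}$ is monoidal in the strong sense needed for the reconstructed action to satisfy the module-associativity coherence. Both hinge on matching the definition of the bimodule structure on $L_Z$ (twisted by the half-braiding $\gamma$) against the $c$-twisted associativity used to build $\M$ from a left module, and this is exactly where the hexagon identities for the braiding enter. Once this coherence is settled, the two constructions are mutually inverse equivalences of groupoids preserving tensor products and invertibility, so they identify $\Pic(\C)$ with the subgroup $\Aut^{br}(\Z(\C);\,\C)$ of $\BrPic(\C) \cong \Aut^{br}(\Z(\C))$.
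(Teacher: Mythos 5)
The paper does not actually prove Theorem~\ref{image of Pic}: it is quoted from \cite{DN}, so there is no internal argument to compare yours against, and your proposal amounts to reconstructing the proof of that reference. Your skeleton is the right one and is sound: $\Phi_\M = L^{-1}\circ R$ for the two identifications $L,\,R\colon \Z(\C)\xrightarrow{\sim}\Fun_{\C\mid\C}(\M,\M)$; for a bimodule induced from a left module via the braiding, the middle associativity \emph{is} the braiding, so $L_X$ and $R_X$ share their underlying functor and their bimodule-functor structures match through the half-braiding $c_{-,X}$ (hexagon axioms), giving $\Phi_\M|_\C\cong\id_\C$; conversely, a tensor isomorphism $\Phi_\M|_\C\cong\id_\C$ transports to monoidal bimodule-functor isomorphisms $\theta_X\colon R_X\xrightarrow{\sim}L_X$. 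Two corrections to how you finish, though. In the converse, saying that $\{\theta_X\}$ ``makes $m\ract X:=X\lact m$ into a genuine left module structure'' has it backwards: $\M$ already is a left $\C$-module category by forgetting the right action, and what $\{\theta_X\}$ yields is a $\C$-bimodule equivalence between $\M$ and the braiding-induced bimodule $\M^{br}$ on that underlying left module; since membership in $\Pic(\C)$ means by definition that $\M^{br}$ is invertible as a bimodule, invertibility of $\M$ finishes the argument with no detour through dual modules. Second, the ``bookkeeping'' you defer is exactly where the statement can fail rather than a routine check: a left module carries two braiding-induced bimodule structures (middle associativity $c_{a,X}$ versus $c_{X,a}^{-1}$), corresponding to the two central embeddings of $\C$ and of $\C$ with reversed braiding into $\Z(\C)$, and only the choice consistent with the paper's embedding $X\mapsto(X,\,c_{-,X})$ yields $\Phi_\M(X)\cong(X,\,c_{-,X})$; with the opposite choice your forward inclusion would trivialize the wrong copy of $\C$. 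Finally, note that the paper's own proof of Proposition~\ref{image of induced} records the characterization from \cite{DN} of $\partial_\M$ by a natural isomorphism $A\ot X\cong\partial_\M(X)\ot A$ of $A$-modules, where $\M$ is the category of modules over an algebra $A$ in $\C$; granting this, the forward inclusion follows in one line, since the braiding $c_{A,X}\colon A\ot X\to X\ot A$ is an isomorphism of $A$-modules by the hexagon axiom and naturality --- a shortcut already available inside the paper for half of your argument.
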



%
%
%

\subsection{Homomorphism  $\Aut(\A) \to \Aut^{br}(\Z(\A))$ }

For any tensor category $\A$ there is an   induction homomorphism
\begin{equation}
\label{induction from Aut}
\Gamma: \Aut(\A) \to \Aut^{br}(\Z(\A)) : \alpha \mapsto \Gamma_\alpha,
\end{equation}
where $\Gamma_\alpha(Z,\, \gamma) = (\alpha(Z),\, \gamma^\alpha)$ and   $\gamma^\alpha$
is defined by the following commutative diagram
\begin{equation}
 \xymatrix{
 X\otimes \alpha(Z)\ar[rr]^{\gamma^\alpha_X}\ar[d] & &  \alpha(Z)\otimes X\ar[d] \\
 \alpha(\alpha^{-1}(X)) \otimes \alpha(Z)\ar[d]_{J_{\alpha^{-1}(X),Z}} & & \alpha(Z) \otimes \alpha(\alpha^{-1}(X))\ar[d]^{J_{Z,\alpha^{-1}(X)}}\\
 \alpha(\alpha^{-1}(X)\otimes Z)\ar[rr]^{\alpha(\gamma_{\alpha^{-1}(X)})} & & \alpha(Z\otimes \alpha^{-1}(X)).
 }
\end{equation}
Here $\alpha^{-1}$ is a quasi-inverse of $\alpha$ and $J_{X,Y}: \alpha(X)\ot \alpha(Z) \xrightarrow{\sim} \alpha(X\ot Z)$
is the tensor functor structure of $\alpha$.

For any invertible object $Z\in \A$ let $\ad(Z)$ denote the tensor autoequivalence of $\A$ given by
\[
\ad(Z)(X)= Z \ot X \ot Z^*.
\]
Thus, we  have a homomorphism
\begin{equation}
\label{ad}
\ad: \Inv(\A) \to \Aut(\A)
\end{equation}


Let
\[
F_\A: \Z(\A)\to \A : (Z,\, \gamma) \mapsto Z
\]
denote the canonical forgetful functor. Let $I_\A: \A \to \Z(\A)$
denote a functor right adjoint to $F_\A$. It is known that
$I_\A(\be)$ is a commutative algebra in $\Z(\A)$ and that $\A$ is
isomorphic to the category of $I_\A(\be)$-modules as a tensor
category \cite{DMNO}.

\begin{proposition}
\label{Image of Gamma}
The image of homomorphism \eqref{induction from Aut} consists of (isomorphism
classes of) all braided
autoequivalences $\beta:\Z(\A) \to\Z(\A)$ such that $\beta(I_\A(\be)) \cong I_\A(\be)$ as algebras in $\Z(\A)$.
\end{proposition}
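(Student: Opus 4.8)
The plan is to reformulate the defining property of $\Gamma_\alpha$ in terms of the forgetful functor $F_\A$ and its right adjoint $I_\A$, and then to read off both inclusions from the identification $\A \cong \mathrm{Mod}_{\Z(\A)}(A)$, where I abbreviate $A := I_\A(\be)$. The starting observation is that the formula $\Gamma_\alpha(Z,\gamma) = (\alpha(Z),\gamma^\alpha)$ says precisely that the underlying object of $\Gamma_\alpha(Z,\gamma)$ is $\alpha(Z) = \alpha(F_\A(Z,\gamma))$, so there is a monoidal natural isomorphism
\[
F_\A \circ \Gamma_\alpha \cong \alpha \circ F_\A .
\]
Since $F_\A$ is strong monoidal with monoidal right adjoint $I_\A$, and $\alpha,\Gamma_\alpha$ are monoidal equivalences, I would pass to mates (doctrinal adjunction) to obtain a monoidal natural isomorphism $\Gamma_\alpha \circ I_\A \cong I_\A \circ \alpha$ of lax monoidal functors.

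For the inclusion that the image is contained in the braided autoequivalences preserving $A$, I evaluate this last isomorphism at $\be$. Because $I_\A$ is lax monoidal the object $I_\A(\be)=A$ acquires its canonical algebra structure, $\alpha$ is strong monoidal with $\alpha(\be)\cong\be$, and $\Gamma_\alpha$ being strong monoidal carries algebras to algebras; monoidality of the natural isomorphism then upgrades $\Gamma_\alpha(A)=(\Gamma_\alpha I_\A)(\be)\cong (I_\A\alpha)(\be)\cong I_\A(\be)=A$ to an isomorphism \emph{of algebras} in $\Z(\A)$, as required.

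For the reverse inclusion I would start from a braided autoequivalence $\beta$ equipped with an algebra isomorphism $\psi\colon \beta(A)\xrightarrow{\sim} A$. Using $\A\cong \mathrm{Mod}_{\Z(\A)}(A)$, the functor $\beta$ sends $A$-modules to $\beta(A)$-modules and preserves the relative tensor product (it is a braided equivalence and $A$ is commutative); composing with restriction of scalars along $\psi$ produces a tensor autoequivalence $\alpha\in\Aut(\A)$. Under the identification of $F_\A$ with the free-module functor $Z\mapsto A\otimes Z$, a short computation using that $\beta$ is monoidal and $\psi$ an algebra map yields $F_\A\circ\beta\cong\alpha\circ F_\A$, hence $F_\A\circ\beta\cong F_\A\circ\Gamma_\alpha$ by the first paragraph. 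Setting $\delta := \Gamma_\alpha^{-1}\circ\beta$, I obtain a braided autoequivalence with $F_\A\circ\delta\cong F_\A$, and the whole claim reduces to showing $\delta\cong\id$.

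I expect this last step to be the main obstacle: that a braided autoequivalence $\delta$ of $\Z(\A)$ commuting with the forgetful functor must be trivial. Here I would invoke the description of the braiding of the center, namely $F_\A(c_{Z,W})=\gamma^{Z}_{F_\A(W)}$ for $Z,W\in\Z(\A)$. Because $\delta$ is braided and $F_\A\delta\cong F_\A$ via a monoidal natural isomorphism $\theta_Z\colon F_\A\delta(Z)\xrightarrow{\sim}F_\A(Z)$, transporting this identity shows that $\theta_Z$ intertwines the half-braidings of $\delta(Z)$ and $Z$ when tested against every object of the form $F_\A(W)$. Since each object of $\A$ is a quotient of some $F_\A(W)$ (so $F_\A$ is dominant), naturality of the half-braidings extends the intertwining to all of $\A$, exhibiting $\theta_Z$ as an isomorphism $\delta(Z)\cong Z$ in $\Z(\A)$, natural in $Z$. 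Thus $\delta\cong\id$ and $\beta\cong\Gamma_\alpha$; the delicate point is verifying that $\theta$ is genuinely compatible with the central structures, i.e. that dominance of $F_\A$ is enough to recover the half-braidings.
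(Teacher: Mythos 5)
Your proposal is correct. The forward inclusion and the construction of $\alpha$ out of $(\beta,\psi)$ coincide with the paper's proof (the paper compresses your mate/doctrinal-adjunction argument into ``taking adjoints of both sides and applying them to $\be$''). The genuine difference is in how $\beta \cong \Gamma_\alpha$ is established at the end. The paper settles this in one sentence by citing \cite{DMNO}: $\Z(\A)$ is identified with the center of $\mathrm{Mod}_{\Z(\A)}(A)$ via the free-module functor $Z \mapsto A \ot Z$, and under that identification $\Gamma_\alpha$ is visibly $\beta$. You avoid this citation and instead prove a self-contained faithfulness lemma: a braided autoequivalence $\delta$ of $\Z(\A)$ admitting a monoidal natural isomorphism $\theta \colon F_\A\delta \cong F_\A$ is isomorphic to the identity. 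Your sketch of that lemma does close, including the point you flag as delicate: braidedness of $\delta$ together with monoidality and naturality of $\theta$ gives $\gamma^{Z}_{F_\A(W)} \circ (\id \ot \theta_Z) = (\theta_Z \ot \id)\circ \gamma^{\delta Z}_{F_\A(W)}$ for every $W \in \Z(\A)$, after transporting along $\theta_W$ using naturality of $\gamma^{\delta Z}$ in the $\A$-variable; since every object of $\A$ is a quotient of some $F_\A(W)$ (under $\A \simeq \mathrm{Mod}_{\Z(\A)}(A)$ every module is a quotient of a free module) and tensoring is exact, the intertwining relation descends along epimorphisms to all of $\A$; faithfulness of $F_\A$ then upgrades $\theta$ to a monoidal natural isomorphism $\delta \cong \id$, whence $\beta \cong \Gamma_\alpha$. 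The trade-off is clear: the paper's route is shorter but outsources the substantive fact (that the free-module functor identifies the two centers compatibly with all structure) to the literature, whereas your route re-proves exactly that content in the special form needed; it is self-contained, and the intermediate lemma --- that the forgetful functor together with the braiding detects braided autoequivalences of the center --- is of independent use, at the cost of the bookkeeping you describe.
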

\begin{proof}
It is clear from definitions that  for any  tensor autoequivalence $\alpha: \A \to \A$
one has $\alpha\circ F_\A \cong  F_\A \circ \Gamma_\alpha$. Taking the adjoints of both
sides and applying them to $\be$ we obtain algebra isomorphism $\Gamma_\alpha(I_\A(\be)) \cong~I_\A(\be)$.

Conversely, assume that  $\beta:\Z(\A) \to\Z(\A)$ is such that there is an algebra isomorphism $\varphi:
I_\A(\be) \xrightarrow{\sim}
\beta(I_\A(\be))$.  Then $\beta$ induces a tensor autoequivalence $\alpha$ of the category of $I_\A(\be)$-modules
in $\Z(\A)$ defined by $Z\mapsto  \beta(Z)$ with the action of $I_\A(\be)$ given by
\[
I_\A(\be) \ot \beta(Z) \xrightarrow{\varphi\ot \id_{\beta(Z)}}   \beta(I_\A(\be)) \ot \beta(Z)   \cong \beta({I_\A(\be) \ot Z})
\xrightarrow{\mu_Z} \beta({Z}),
\]
where $\mu_Z$ denotes the action of $I_\A(\be)$ on $Z$.
Since the category of  $I_\A(\be)$-modules  in $\Z(\A)$ is identified with $\A$ we get $\alpha\in \Aut(\A)$.
Since $\Z(\A)$  is identified with the center of the category of $I_\A(\be)$-modules via the free module functor
$Z\mapsto I_\A(\be) \ot Z$ \cite{DMNO} we see that $\Gamma_\alpha \cong \beta$.
\end{proof}


Let $\A =\Rep(H)$, where $H$ is a finite dimensional Hopf algebra.
As in Example~\ref{YD as center}, we identify $\Z(\Rep(H))$ with the category of Yetter-Drinfeld modules over $H$.
Below we describe braided
autoequivalences of $\Z(\Rep(H))$ induced from tensor autoequivalences of $\Rep(H)$, see Examples~\ref{twisted modules1}
and \ref{twisted modules2}.

\begin{example}
\label{twisted YD-modules0}
Let $a:H \xrightarrow{\sim} H$ be a Hopf algebra automorphism and let $F_a \in \Aut(\Rep(H))$ be the
corresponding tensor autoequivalence.
Then $\Gamma_{F_a}(Z) = Z$ as a vector space with the action and coaction given by
\begin{equation}
\label{Delta a} h\ot x \mapsto a(h)\cdot x,\qquad \rho_a (x) = x\0
\ot a(x\1)\qquad  h\in H,\, x\in Z.
\end{equation}
\end{example}

\begin{example}
\label{twisted YD-modules1}
Let $J \in H\ot H$ be an invariant twist and  let $F_J \in \Aut(\Rep(H))$ denote the
corresponding tensor autoequivalence (see Example~\ref{twisted modules2}).
Then $\Gamma_{F_J}(Z)=Z$ as an $H$-module
with the coaction given by
\begin{equation}
\label{Delta J}
\rho^{J} (x) = (J^{-1})^{2} \cdot (J^{1} \cdot
x)\0 \ot (J^{-1})^{1} (J^{1} \cdot x)\1 J^{2}, \qquad x\in Z.
\end{equation}
Here $J^1\ot J^2$ stands for $J$ and $J^{-1} \ot J^{-2}$ for the inverse of $J$.
Note that formula \eqref{Delta J} appeared in
\cite{CZ}.
\end{example}

\begin{example}
\label{twisted YD-modules2} Let $\sigma \in (H \ot H)^{*}$ be an
invariant $2$-cocycle on $H$. The dual map $\sigma^{*}$ can be
seen as an invariant twist on $H^*$ and, as such, it gives rise to
an autoequivalence $F_{\sigma}$ of $\Rep (H^{*})$. By Example
\ref{twisted YD-modules1}, this induces an autoequivalence
$\Gamma_{F_{\sigma}}$ of $_{{H}^{*}} \mathcal{YD} ^{H^{*}}$. Since
$_{H} \mathcal{YD}^{H}$ and $_{{H}^{*}} \mathcal{YD} ^{H^{*}}$ are
tensor equivalent  via the functor that dualizes module and
comodule structures, we obtain an autoequivalence of $_{H}
\mathcal{YD}^{H}$, which we shall still denote by
$\Gamma_{F_{\sigma}}$. If $V \in \, _{H} \mathcal{YD}^{H}$ then
$\Gamma_{F_{\sigma}} (V) = V$ as an $H$-comodule, with the
$H$-action given by
\begin{equation}
\label{effect of invariant cycles} h \cdot v = \sigma^{-1} \Big(
\big( h_{(2)} \cdot v_{(0)} \big)_{(1)} \ot  h_{(1)} \Big) \sigma
(h_{(3)} \ot  v_{(1)}) \big( h_{(2)} \cdot v_{(0)} \big)_{(0)},
\quad h \in H, v \in V.
\end{equation}
The invertible $\Rep(H)$-bimodule category $\M_\sigma$ corresponding to $\Gamma_{F_{\sigma}}$
under isomorphism \eqref{main iso} can be described as follows.
Let $H_\sigma$ be the algebra $H$ with the multiplication twisted by $\sigma$, i.e.,
$H_\sigma=H$ as a vector space and
\[
a\cdot b := \sigma(a\1 \ot b\1) a\2 b\2 = a\1 b\1 \sigma(a\2 \ot
b_2),\qquad a,\,b\in H_\sigma.
\]
Then $\M_\sigma$ is the category of $H_\sigma$-modules with the obvious $\Rep(H)$-module actions.
The fact that it is a $\Rep(H)$-bimodule category follows from invariance of $\sigma$.
\end{example}

\begin{remark}
\label{iotas}
Autoequivalences described in  Examples~\ref{twisted YD-modules0}, \ref{twisted YD-modules1}, \ref{twisted YD-modules2}
give rise to  group homomorphisms
\begin{eqnarray*}
\iota_1 &:&  \Aut_{\text{Hopf}}(H)\to \Aut^{br}\big (\Z(\Rep(H)) \big),\\
\iota_2 &:&  \Hinv(H) \to \Aut^{br} \big( \Z(\Rep(H)) \big),\\
\iota_3 &:&  \Hinv(H^*) \to \Aut^{br} \big( \Z(\Rep(H)) \big).
\end{eqnarray*}
\end{remark}

Let $(H,\, R)$ be a quasi-triangular Hopf algebra. Then  $\Rep(H)$ is embedded into  $_{H} \mathcal{YD}^{H}$
by defining the coaction on an $H$-module $V$ via
\begin{equation}
\label{v--> Rv}
\delta(v)= R^{21}(v\ot 1),\qquad v\in V.
\end{equation}

\begin{proposition}
\label{Hinv to Pic}
Let $(H,\, R)$ be a quasi-triangular Hopf algebra. Let $\sigma$ be an invariant $2$-cocycle on $H$
such that
\begin{equation}
\label{R-invariance of sigma}
\sigma(R^1\ot h) R^2=\eps(h),\quad \sigma^{-1}(h \ot R^1)R^2=\eps(h)
\end{equation}
for all $h\in H$, where $R=R^1\ot R^2$.  Then $\Gamma_{F_\sigma}$ belongs to the Picard group
of $\Rep(H)$ (corresponding to the braiding defined by $R$) via identification  from Theorem~\ref{image of Pic}.
\end{proposition}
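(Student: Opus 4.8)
The plan is to invoke Theorem~\ref{image of Pic}: since $\Gamma_{F_\sigma}$ is already known to lie in $\Aut^{br}(\Z(\Rep(H)))$, it suffices to show that it lies in the subgroup $\Aut^{br}(\Z(\Rep(H));\,\Rep(H))$, i.e. that its restriction to the braided subcategory $\Rep(H)\subset \Z(\Rep(H))$ is trivializable in the sense of Definition~\ref{trivializable def}. Recall that this subcategory is the image of the embedding $\iota$ which sends an $H$-module $V$ to the Yetter--Drinfeld module with the original action and with coaction $\delta(v)=R^2\cdot v\ot R^1$ prescribed by \eqref{v--> Rv}. Because $\Gamma_{F_\sigma}$ leaves the comodule structure unchanged and only retwists the module structure via \eqref{effect of invariant cycles} (Example~\ref{twisted YD-modules2}), I would prove the sharper statement that $\Gamma_{F_\sigma}(\iota(V))=\iota(V)$ on the nose and naturally in $V$; the identity natural transformation then furnishes the required tensor isomorphism $\Gamma_{F_\sigma}|_{\Rep(H)}\cong \id$.

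First I would substitute the coaction $\delta(v)=R^2\cdot v\ot R^1$ of $\iota(V)$ into formula \eqref{effect of invariant cycles}. Writing $R=R^1\ot R^2$ and using an independent copy $\bR=\bR^1\ot \bR^2$ for the second application of the coaction (to $h_{(2)}\cdot v_{(0)}$), one finds that the retwisted action is given by acting through the element
\[
a(h)=\sigma^{-1}(\bR^1\ot h_{(1)})\,\sigma(h_{(3)}\ot R^1)\,\bR^2\,h_{(2)}\,R^2\in H,
\]
so the whole proposition reduces to the purely Hopf-algebraic identity $a(h)=h$ for all $h\in H$. As a sanity check, for $h=1$ the normalization of $\sigma$ together with $(\eps\ot\id)(R)=1$ gives $a(1)=1$, as it must.

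To prove $a(h)=h$ in general I would use the quasi-triangularity relation $\Delta^{\op}(g)R=R\Delta(g)$, together with the invariance of $\sigma$ (Definition~\ref{invariant cocycles}), to transpose the arguments of the two cocycle factors and pair each copy of $R$ with the correct leg of $\Delta^{(2)}(h)$, bringing $a(h)$ into the form
\[
\sigma^{-1}(h_{(1)}\ot \bR^1)\,\sigma(R^1\ot h_{(3)})\,\bR^2\,h_{(2)}\,R^2 .
\]
At that point the two hypotheses \eqref{R-invariance of sigma} do exactly the required work: the first identity collapses $\sigma(R^1\ot h_{(3)})R^2$ (the scalar couples with the rightmost $R^2$, the intervening $h_{(2)}$ and $\bR^2$ not involving that copy of $R$) to $\eps(h_{(3)})$, and then the second collapses $\sigma^{-1}(h_{(1)}\ot \bR^1)\bR^2$ to $\eps(h_{(1)})$, leaving $\eps(h_{(1)})\,h_{(2)}\,\eps(h_{(3)})=h$.

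I expect the main obstacle to be precisely the transposition step: in the raw expression for $a(h)$ the element $R^1$ sits in the \emph{second} slot of $\sigma$ and $\bR^1$ in the \emph{first} slot of $\sigma^{-1}$, whereas the hypotheses \eqref{R-invariance of sigma} are phrased with the opposite slot placement, so neither condition applies before a careful quasi-triangular rewriting that simultaneously rearranges the product $\bR^2 h_{(2)} R^2$ and keeps track of which copy of $R$ pairs with which leg of $\Delta^{(2)}(h)$. This Sweedler-notation bookkeeping is the delicate part; once it is done the collapse is immediate. Having shown $a=\id_H$, we conclude that $\Gamma_{F_\sigma}$ fixes every object of $\Rep(H)\subset \Z(\Rep(H))$, hence restricts trivially there, and therefore belongs to $\Pic(\Rep(H))$ by Theorem~\ref{image of Pic}.
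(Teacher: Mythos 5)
Your reduction is the same as the paper's: embed $\Rep(H)$ into ${}_H\mathcal{YD}^H$ via \eqref{v--> Rv}, compute the action of $\Gamma_{F_\sigma}$ on such objects from \eqref{effect of invariant cycles}, show it is unchanged, and conclude via Theorem~\ref{image of Pic}; your raw expression for $a(h)$ is indeed what literal substitution gives. The gap is the transposition step, and it is not just delicate Sweedler bookkeeping: it cannot be done with the tools you name. Invariance of $\sigma$ (Definition~\ref{invariant cocycles}) only exchanges $x_{(1)}y_{(1)}$ against $x_{(2)}y_{(2)}$ across $\sigma$; neither it nor $\Delta^{\op}(g)R=R\Delta(g)$ ever interchanges the two arguments of $\sigma$. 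In fact the identity you want as a consequence of invariance and quasi-triangularity alone is false: take $H=kG$ with $G=\mathbb{Z}/n\times\mathbb{Z}/n$, $n\ge 3$, take $\sigma$ the non-symmetric bicharacter $\sigma((a,b)\ot(a',b'))=\zeta^{ab'}$ (every $2$-cocycle on a cocommutative Hopf algebra is invariant), and take $R$ the $R$-matrix attached to a non-degenerate bicharacter of $\widehat G$; a direct computation on group-likes shows the raw and transposed expressions differ. So the transposed form of $a(h)$ cannot be reached before the hypotheses are brought in, and invoking the hypotheses to justify the transposition would be circular.

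The correct bridge keeps the slots and swaps $\sigma\leftrightarrow\sigma^{-1}$ instead. The maps $\phi(h)=\sigma(R^1\ot h)R^2$ and $\psi(h)=\sigma^{-1}(R^1\ot h)R^2$ are mutually inverse in the convolution algebra $\Hom(H,H)$: by $(\Delta\ot\id)(R)=R^{13}R^{23}$ one has $R^1\ot \bR^1\ot R^2\bR^2=R^1_{(1)}\ot R^1_{(2)}\ot R^2$, hence
\[
(\phi*\psi)(h)=\sigma\big(R^1_{(1)}\ot h_{(1)}\big)\,\sigma^{-1}\big(R^1_{(2)}\ot h_{(2)}\big)\,R^2=\eps(R^1)\eps(h)R^2=\eps(h)1,
\]
and likewise $\psi*\phi=\eps(\cdot)1$. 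Therefore the first condition in \eqref{R-invariance of sigma} (that $\phi$ is the convolution unit) forces $\sigma^{-1}(R^1\ot h)R^2=\eps(h)$, and the same computation applied to $h\mapsto\sigma^{\pm1}(h\ot R^1)R^2$ shows that the second condition forces $\sigma(h\ot R^1)R^2=\eps(h)$. These swapped identities apply verbatim to your raw $a(h)$: the factors $\sigma^{-1}(\bR^1\ot h_{(1)})\bR^2=\eps(h_{(1)})$ and $\sigma(h_{(3)}\ot R^1)R^2=\eps(h_{(3)})$ collapse it to $h$ with no transposition at all. (This is in effect how the paper proceeds: in its proof the two cocycle factors appear with $\sigma$ and $\sigma^{-1}$ placed so that \eqref{R-invariance of sigma} applies on the nose; the discrepancy with a literal substitution into \eqref{effect of invariant cycles} is a $\sigma\leftrightarrow\sigma^{-1}$ convention issue that the above equivalence resolves either way.) Finally, to conclude $\Gamma_{F_\sigma}|_{\Rep(H)}\cong\id$ as \emph{tensor} functors you should also check the tensor structure of the restricted functor, which on a pair of objects of the embedded $\Rep(H)$ is multiplication by $\sigma^{\pm1}(R^1\ot\bR^1)R^2\ot\bR^2$; the same identities show this equals $1\ot 1$.
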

\begin{proof}
It suffices to show that $\Gamma_{F_\sigma}$ belongs to
$\Aut^{br}( {}_{H} \mathcal{YD}^{H},\, \Rep(H))$ (recall that $\Z(\Rep(H))\cong {}_{H} \mathcal{YD}^{H}$
as a braided tensor category).
For any $H$-module $V$  (viewed as a Yetter-Drinfeld module via \eqref{v--> Rv})
we have $\Gamma_{F_\sigma}(V)=V$ as an $H$-comodule, while its $H$-module structure
is computed using  \eqref{effect of invariant cycles} :
\[
h\cdot v =\sigma(R^1 \ot h\1) \, \sigma^{-1}(h\3\ot r^1)\, R^2h\2r^2\cdot v,\qquad h\in H,\, v\in V.
\]
Here $r$ stands for another copy of $R$.  It is clear from the last equation
that the invariance condition \eqref{R-invariance of sigma} implies   $\Gamma_{F_\sigma}|_{\Rep(H)} = \id_{\Rep(H)}$.
\end{proof}

\subsection{Induction from subcategory}
\label{induction section}

Let $\C$ be a braided tensor category and let $\D \subset \C$ be a  tensor subcategory. Given an invertible
$\D$-module category $\M$ we can consider the induced $\C$-module category  $\C \bt_\D \M$.
It is clear that the latter category is invertible and that the assignment
\begin{equation}
\label{IndDC}
\Ind_\D^\C: \Pic(\D) \to \Pic (\C) : \M \mapsto \C \bt_\D \M
\end{equation}
is a group homomorphism\footnote{More precisely, this is a monoidal functor
between monoidal groupoids.}.

Let $A$ be an algebra in $\D$ such that $\M$ is equivalent to the
category of $A$-modules in $\D$. Then $\Ind_\D^\C(\M)$ is the
category of $A$-modules in $\C$.

Note that homomorphism \eqref{IndDC} is not injective in general.
Let
\[
\C =\bigoplus_{\alpha\in \Sigma}\,\C_\alpha
\]
be the decomposition of $\C$ into a direct sum of $\D$-module
subcategories (this decomposition exists and is unique by
\cite{EO}).

Let $\Sigma_0 =\{ \alpha \in \Sigma  \mid \C_\alpha \mbox { is an
invertible $\D$-module category} \}$.

\begin{proposition}
The kernel of homomorphism \eqref{IndDC} is precisely the set  of the equivalence classes of categories
$\C_\alpha,\, \alpha\in \Sigma_0$.
\end{proposition}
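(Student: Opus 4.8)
The plan is to show that the kernel of $\Ind_\D^\C$ consists exactly of those classes $[\C_\alpha]$ with $\alpha \in \Sigma_0$. The key observation is that $\Ind_\D^\C(\M) \cong \C$ in $\Pic(\C)$ precisely when the induced $\C$-module category $\C \bt_\D \M$ is equivalent to the regular $\C$-module category $\C$. First I would recall that, by the definition of the decomposition $\C = \bigoplus_{\alpha \in \Sigma} \C_\alpha$ into indecomposable $\D$-module subcategories, each $\C_\alpha$ is itself an invertible $\D$-module category exactly when $\alpha \in \Sigma_0$; in particular the unit component $\C_e$ containing $\be$ is one of these. The homomorphism property of \eqref{IndDC} means the kernel is a subgroup, so it suffices to identify which single classes $[\C_\alpha]$ become trivial after induction, and then check these already account for the whole kernel.

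The main computation is to evaluate $\Ind_\D^\C(\C_\alpha) = \C \bt_\D \C_\alpha$. Using the decomposition, I would write $\C \bt_\D \C_\alpha \cong \bigl(\bigoplus_{\beta} \C_\beta\bigr) \bt_\D \C_\alpha \cong \bigoplus_{\beta} (\C_\beta \bt_\D \C_\alpha)$, and analyze each summand. Since $\C$ is braided, the relative tensor product over $\D$ endows $\Pic(\D)$-module data with the fusion of the $\D$-module subcategories; concretely, if $\C_\alpha$ is realized as the category of $A_\alpha$-modules in $\D$ for an algebra $A_\alpha$, then $\C \bt_\D \C_\alpha$ is the category of $A_\alpha$-modules in $\C$. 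The crucial point is that when $\C_\alpha$ is invertible as a $\D$-module category (i.e.\ $\alpha \in \Sigma_0$), inducing it up to $\C$ collapses it onto the regular $\C$-bimodule: the invertibility over $\D$ together with the faithfulness of the embedding $\D \subset \C$ forces $\C \bt_\D \C_\alpha \cong \C$. Conversely, if $\alpha \notin \Sigma_0$, then $\C_\alpha$ is a non-invertible $\D$-module category, and I would argue that $\dim$-counting (or comparison of Frobenius-Perron dimensions / lengths of the module categories) shows $\C \bt_\D \C_\alpha \not\cong \C$, so the class survives.

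For the converse inclusion — that every kernel element arises this way — I would use that any invertible $\D$-module category $\M$ lying in the kernel satisfies $\C \bt_\D \M \cong \C$, which says $\M$ becomes the unit after induction. Pairing this with the unit-component identification $\C_e$ (which is itself invertible and lies in the kernel), I would show $\M$ must already be equivalent to one of the $\C_\alpha$ with $\alpha \in \Sigma_0$: indeed the regular $\C$-module category $\C$ decomposes over $\D$ exactly as $\bigoplus_\alpha \C_\alpha$, so the $\D$-module categories $\C_\alpha$ with $\alpha \in \Sigma_0$ are precisely the invertible $\D$-module categories that embed as $\D$-module subcategories of $\C$, and these are exactly the ones that induce to the trivial element. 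The \emph{main obstacle} I anticipate is the bookkeeping in the relative tensor product computation $\C \bt_\D \C_\alpha \cong \C$ for $\alpha \in \Sigma_0$: one must be careful that invertibility as a $\D$-module category (rather than as a $\D$-bimodule category) is what is available, and translate this correctly through the algebra-of-modules description so that the induced category is genuinely the regular $\C$-module, not merely an invertible one. Verifying that the non-invertible components do not accidentally induce to $\C$ will likely require the uniqueness of the decomposition from \cite{EO} together with a dimension comparison.
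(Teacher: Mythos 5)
Your first inclusion (kernel $\subseteq \{[\C_\alpha] \mid \alpha\in\Sigma_0\}$) is essentially the paper's own argument: decompose both sides of $\C \bt_\D \M \cong \C$ as $\D$-module categories, note that $\M \cong \D \bt_\D \M$ appears as a summand of the left-hand side (using that $\D$ itself is the unit component), and match it with some component $\C_\beta$ by uniqueness of the decomposition; invertibility of $\M$ then gives $\beta \in \Sigma_0$. That direction is fine.

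The converse inclusion, however, contains a genuine gap. Your ``crucial point'' is that for $\alpha \in \Sigma_0$ the equivalence $\C \bt_\D \C_\alpha \cong \C$ is \emph{forced} by invertibility of $\C_\alpha$ over $\D$ together with faithfulness of the embedding $\D \subset \C$. This principle is false: it makes no use of the fact that $\C_\alpha$ sits inside $\C$ as a component, and if it were true it would apply verbatim to \emph{every} $\M \in \Pic(\D)$ (all such $\M$ are invertible), proving that $\Ind_\D^\C$ is the trivial homomorphism. That contradicts the very proposition being proved, and also the paper's later computation: the autoequivalence $\tau$ induced from the nontrivial element of $\Pic(\sVec)$ is a \emph{nontrivial} element of $\Aut^{br}(\Z(\C_n))$ (Corollary~\ref{tau on Ext} and Theorem~\ref{main thm}), so induction of invertible module categories is certainly not trivial in general. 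The missing ingredient, which is exactly what the paper uses, is the fusion of the invertible components: $\E = \bigoplus_{\alpha\in\Sigma_0} \C_\alpha$ is a tensor subcategory of $\C$, group-graded with trivial component $\D$, so that $\C_\alpha \bt_\D \C_\beta \cong \C_{\alpha\beta}$. This grading gives $\Ind_\D^\E \C_\beta = \E \bt_\D \C_\beta \cong \E$, and factoring $\Ind_\D^\C = \Ind_\E^\C \circ \Ind_\D^\E$ then yields $\Ind_\D^\C \C_\beta \cong \C \bt_\E \E \cong \C$. Your proposal never invokes this group structure on $\Sigma_0$, which is the heart of this direction. Finally, your anticipated need to rule out non-invertible components by Frobenius--Perron dimension counting is moot: the kernel is a subset of $\Pic(\D)$, and a non-invertible $\C_\alpha$ defines no class there at all; moreover, such dimension arguments are unreliable in the present non-semisimple setting, which is precisely why the paper's decomposition argument avoids them.
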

\begin{proof}
Suppose that $\M$ is an invertible $\D$-module category such that
\begin{equation}
\label{CM=C}
\C \bt_\D \M \cong \C
\end{equation}
as a $\C$-module category.  From the  $\D$-module decomposition of both sides  of \eqref{CM=C} we obtain
\[
\bigoplus_{\alpha\in \Sigma}\,(\C_\alpha \bt_\D \M) \cong \bigoplus_{\beta\in \Sigma}\,\C_\beta.
\]
Taking $\alpha$ such that $\C_\alpha =\D$ we conclude that $\M \cong \C_\beta$ for some $\beta\in \Sigma_0$.

Conversely, suppose that  $\M \cong \C_\beta$ for some $\beta\in \Sigma_0$.  Note that
\[
\E= \bigoplus_{\alpha\in \Sigma_0}\,\C_\alpha
\]
is a  tensor subcategory of $\C$. It is group graded with the trivial component $\D$. Thus,
$\Ind_\D^\E \M \cong \E$ and, hence, $\Ind_\D^\C\M = \Ind_\E^\C(\Ind_\D^\E \M )\cong \C$.
\end{proof}

\begin{proposition}
\label{image of induced}
Let $\C$ be a braided tensor category and let $\D\subset\C$ be a tensor subcategory.
The image  of the composition
\[
\Pic(\D) \xrightarrow{\Ind_{\D}^{\C}} \Pic(\C) \to \Aut^{br}(\C)
\]
is contained in $\Aut^{br}(\C;\, \D')$. Here $\D'$ denotes the centralizer of $\D$ in $\C$.
\end{proposition}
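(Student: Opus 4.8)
\textit{Proof proposal.} The plan is to combine the algebra description of the induced module category with the explicit form of the homomorphism $\Pic(\C)\to\Aut^{br}(\C)$. Recall that $\beta_\M$ is obtained by restricting $\Phi_\M\in\Aut^{br}(\Z(\C);\,\C)$ (Theorem~\ref{image of Pic}) to the centralizer $\C'$ of $\C$ inside $\Z(\C)$, a copy of $\C$ which $\Phi_\M$ preserves because it is braided and acts trivially on $\C$. Thus $\beta_\M\in\Aut^{br}(\C;\,\D')$ means precisely that $\Phi_\M$ restricts to the identity on the copy of $\D'$ sitting inside $\C'\subset\Z(\C)$. Writing $\N\in\Pic(\D)$ as the category of $A$-modules in $\D$ for an algebra $A$ in $\D$, the description recalled after \eqref{IndDC} identifies $\Ind_\D^\C\N=\C\bt_\D\N$ with the category $\mathrm{Mod}_\C(A)$ of $A$-modules in $\C$. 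Since $A$ lies in $\D$, I would exploit that $A$ is centralized by every object of $\D'$.

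For the key step, fix $Y\in\D'$; by the defining property \eqref{PhiM} it suffices to produce an isomorphism of $\C$-bimodule endofunctors $Y\ot(-)\cong(-)\ot Y$ of $\mathrm{Mod}_\C(A)$, since this forces $\Phi_\M(Y)\cong Y$ in $\Z(\C)$. Here the left $\C$-action on an $A$-module $M$ is $W\ot M$ with the $A$-module structure obtained by braiding $A$ past $W$, while the right $\C$-action $M\ot W$ keeps the original $A$-module structure of $M$. The natural candidate for the comparison isomorphism is the braiding $c_{Y,-}\colon Y\ot M\to M\ot Y$ of $\C$. A direct computation using naturality of the braiding together with the hexagon axiom shows that $c_{Y,M}$ is $A$-linear --- a genuine morphism in $\mathrm{Mod}_\C(A)$, not merely in $\C$ --- exactly when the double braiding $c_{Y,A}\circ c_{A,Y}$ is the identity. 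Since $Y\in\D'$ centralizes $A\in\D$, this monodromy is trivial, so $c_{Y,-}$ descends to $\mathrm{Mod}_\C(A)$. This is the one place where the hypothesis $Y\in\D'$ enters, and it is the conceptual heart of the argument.

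It remains to check that these descended isomorphisms are compatible with the $\C$-bimodule structure and assemble into an isomorphism of braided tensor functors $\beta_\M|_{\D'}\cong\id_{\D'}$. Compatibility of $c_{Y,-}$ with the left and right $\C$-actions, with tensor products $Y_1\ot Y_2$ of objects of $\D'$, and with the braiding of $\C$ are all instances of the hexagon identities and of naturality, and require no further hypotheses once the $A$-linearity above is known. I expect this last step --- promoting the $A$-linear, object-level comparison into a full isomorphism of braided tensor functors while keeping track of the half-braidings of the objects of $\C'$ --- to be the main technical obstacle, even though it is routine bookkeeping rather than a genuinely new idea. Once it is carried out one obtains $\beta_\M\in\Aut^{br}(\C;\,\D')$, and since $\M=\Ind_\D^\C\N$ was arbitrary, the image of the composition $\Pic(\D)\xrightarrow{\Ind_\D^\C}\Pic(\C)\to\Aut^{br}(\C)$ is contained in $\Aut^{br}(\C;\,\D')$, as claimed.
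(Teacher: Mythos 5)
Your proposal is correct and follows essentially the same route as the paper: both realize $\N\in\Pic(\D)$ as the category of modules over an algebra $A\in\D$, identify $\Ind_\D^\C\N$ with the category of $A$-modules in $\C$, and rest on the observation that the braiding with $Y\in\D'$ is $A$-linear precisely because the double braiding $c_{Y,A}\circ c_{A,Y}$ is trivial. The only difference is packaging: you phrase the conclusion through the defining property \eqref{PhiM} of $\Phi_\M$ and restriction to the centralizer copy of $\C$ in $\Z(\C)$, whereas the paper quotes the equivalent characterization from \cite{DN} of the induced autoequivalence $\partial_\M$ by a natural tensor isomorphism $A\ot X\cong\partial_\M(X)\ot A$ of $A$-modules, which is just the evaluation of your bimodule-functor isomorphism at the free module.
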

\begin{proof}
Let $\M \in \Pic(\D)$ be an invertible $\D$-module category and
let $A\in \D$ be an algebra such that $\M$ is identified with the
category of $A$-modules in $\D$.  When we view $A$ as an algebra
in $\C$ the corresponding element $\partial_\M$ of $\Aut^{br}(\C)$
is determined by the existence of a natural tensor isomorphism
\[
A \ot X \cong \partial_\M(X)\ot A,\qquad X\in \C,
\]
of $A$-modules \cite{DN}.  When $X$ centralizes $\D$ (and, hence, centralizes $A$) we get a natural
tensor isomorphism $\partial_\M(X)\cong X$, i.e., $\partial_\M|_{\D'} \cong \id_{\D'}$.
\end{proof}


\subsection{Action on the categorical Lagrangian Grassmannian}

Let $\A$ be a finite tensor category.

\begin{proposition}
\label{braiding and embeddings}
There is a bijection between the set of braidings on $\A$ and the set of tensor embeddings $\iota: \A \hookrightarrow  \Z(\A)$
such that $F\circ \iota(\A) = \A$. Namely, the braiding $c_{X,Z}: X\ot Z \xrightarrow{\sim} Z \ot X$ corresponds to the embedding
$\iota_c: Z \mapsto (Z,\, c_{-,Z})$.
\end{proposition}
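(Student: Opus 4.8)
The plan is to exhibit the inverse bijection explicitly and to verify that the stated assignment $c\mapsto\iota_c$ and its candidate inverse are mutually inverse by matching, term by term, the axioms defining a braiding with the axioms defining the objects, morphisms, and tensor product of $\Z(\A)$. Throughout I interpret the condition $F\circ\iota(\A)=\A$ as saying that $\iota$ is a section of the forgetful functor $F$, so that the underlying object of $\iota(Z)$ is $Z$ and I may write $\iota(Z)=(Z,\gamma^Z)$ for a family of half-braidings $\gamma^Z=\{\gamma^Z_X\}_{X\in\A}$. Working in a skeletal model of $\A$ makes $F\circ\iota=\id_\A$ hold strictly, which loses no generality.

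First I would check that $\Phi\colon c\mapsto\iota_c$ is well defined. Given a braiding $c$, for each object $Z$ the family $\gamma_X:=c_{X,Z}$ is natural in $X$ by naturality of $c$ in its first argument, and the half-braiding compatibility $\gamma_{X\otimes Y}=(\gamma_X\otimes\id_Y)(\id_X\otimes\gamma_Y)$ is precisely one of the two hexagon identities for $c$; hence $(Z,c_{-,Z})$ is a genuine object of $\Z(\A)$. Naturality of $c$ in its second argument says exactly that every morphism $f\colon Z\to W$ of $\A$ is a morphism $(Z,c_{-,Z})\to(W,c_{-,W})$ in $\Z(\A)$, so $\iota_c$ is a well-defined functor which is the identity on morphisms; faithfulness is then immediate, and fullness holds because a center morphism between $\iota_c(Z)$ and $\iota_c(W)$ is by definition an $\A$-morphism commuting with the half-braidings, a condition automatically satisfied by naturality of $c$. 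Finally, the second hexagon identity, namely $c_{X,Z\otimes W}=(\id_Z\otimes c_{X,W})(c_{X,Z}\otimes\id_W)$, is exactly the statement that $\id_{Z\otimes W}$ intertwines the half-braiding of $\iota_c(Z)\otimes\iota_c(W)$, computed by the tensor product rule in $\Z(\A)$, with that of $\iota_c(Z\otimes W)$; this supplies the tensor structure and makes $\iota_c$ a tensor embedding with $F\circ\iota_c=\id_\A$.

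Next I would define the candidate inverse $\Psi$. Given such an embedding with $\iota(Z)=(Z,\gamma^Z)$, set $c^\iota_{X,Z}:=\gamma^Z_X$. Naturality in $X$ comes from naturality of each half-braiding; naturality in $Z$ comes from functoriality of $\iota$, since for $g\colon Z\to W$ the morphism $\iota(g)$ has underlying map $g$ and, being a center morphism, intertwines $\gamma^Z$ and $\gamma^W$. One hexagon for $c^\iota$ is the half-braiding compatibility of each $(Z,\gamma^Z)$, and the other comes from the tensor structure of $\iota$: because $F\circ\iota=\id_\A$, the tensor-structure isomorphism $J^\iota_{Z,W}$ has underlying morphism $\id_{Z\otimes W}$, and a center isomorphism with underlying identity forces the two half-braidings on $Z\otimes W$ to coincide, which is precisely $c^\iota_{X,Z\otimes W}=(\id_Z\otimes\gamma^W_X)(\gamma^Z_X\otimes\id_W)$. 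Thus $c^\iota$ is a braiding, and that $\Phi\circ\Psi$ and $\Psi\circ\Phi$ are identities is immediate from the two constructions.

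I expect the main obstacle to be purely the coherence bookkeeping: in a non-strict $\A$ the hexagon axioms, the half-braiding axiom, and the formula for the half-braiding of a tensor product all carry associativity constraints, and these must be inserted consistently so that ``one hexagon $=$ half-braiding axiom'' and ``other hexagon $=$ tensor structure is a center morphism'' hold on the nose. Passing to a strict skeletal model of $\A$, justified by Mac Lane coherence, removes this difficulty and reduces the argument to the clean term-by-term matching above; the only remaining point needing care is the interpretation of the hypothesis $F\circ\iota(\A)=\A$ as the strict section condition $F\circ\iota=\id_\A$.
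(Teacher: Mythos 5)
Your two constructions $\Phi$ and $\Psi$ are the intended ones (the paper's own proof is a one-sentence ``this is clear''), and your verifications of naturality, the first hexagon, and full faithfulness are fine. The gap is the step ``because $F\circ\iota=\id_\A$, the tensor-structure isomorphism $J^\iota_{Z,W}$ has underlying morphism $\id_{Z\ot W}$.'' Equality $F\circ\iota=\id_\A$ of \emph{underlying functors} puts no constraint whatsoever on the tensor structure of $\iota$. Indeed, if $J$ is an invariant twist, then $F_J$ (Example~\ref{twisted modules2}) is a tensor autoequivalence whose underlying functor is the identity, and $\iota_c\circ F_J$ is a tensor embedding with the same underlying functor and the same image as $\iota_c$, but whose tensor structure has underlying morphisms $J_{Z,W}\neq\id$. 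This is not a hypothetical pathology for this paper: for $\A=\C_n$ the invariant twists $J_M$, $M\in\Sym_n(k)$, represent the nonzero classes of $\Hinv(E(n)^*)\cong\Sym_n(k)$ \cite{BC}, so $\iota_c\circ F_{J_M}$ is not equal, nor even tensor-isomorphic, to $\iota_{c'}$ for any braiding $c'$ (applying $F$ to a putative tensor isomorphism would give $(\id_\A,\id)\cong(\id_\A,J_M)$, contradicting $[J_M]\neq 0$). Consequently, under your reading of the hypothesis, $\Psi$ has nontrivial fibers and $\Phi\circ\Psi\neq\id$ (it sends $\iota_c\circ F_{J_M}$ to $\iota_c$): the set of tensor embeddings that are sections of $F$ at the level of underlying functors is strictly larger than the set of braidings, and the bijection you assert is false as you have set it up. The statement must be read the way the paper's conventions prescribe (a tensor subcategory \emph{is} the image of an embedding): the bijection is between braidings on $\A$ and tensor subcategories $\L\subset\Z(\A)$ with $F(\L)=\A$, so that $\iota_c$ and $\iota_c\circ F_{J_M}$, having the same image, are identified. (Strengthening the hypothesis instead to $F\circ\iota=\id_\A$ as an equality of \emph{tensor} functors also rescues your proof, but it changes the statement and assumes away the nontrivial content, namely that every embedded copy of $\A$ lying over $\A$ comes from a braiding.)

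The good news is that the conclusion you drew from the faulty step, the second hexagon for $c^\iota$, is true and provable without it, and this is exactly what makes the image-level statement work. Since $F\circ\iota=\id_\A$ on morphisms, every morphism $f$ of $\A$ equals $F(\iota(f))$ and hence is a morphism in $\Z(\A)$ between the objects $\iota(-)$; applying this to the $\A$-endomorphism $j_{Z,W}:=F(J^\iota_{Z,W})$ of $Z\ot W$ gives $\gamma^{Z\ot W}_X\circ(\id_X\ot j_{Z,W})=(j_{Z,W}\ot\id_X)\circ\gamma^{Z\ot W}_X$. On the other hand, $J^\iota_{Z,W}$ being a morphism in $\Z(\A)$ from $\iota(Z)\ot\iota(W)$ to $\iota(Z\ot W)$ says $\gamma^{Z\ot W}_X\circ(\id_X\ot j_{Z,W})=(j_{Z,W}\ot\id_X)\circ(\id_Z\ot\gamma^W_X)\circ(\gamma^Z_X\ot\id_W)$. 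Comparing the two identities and cancelling the invertible $j_{Z,W}$ yields $\gamma^{Z\ot W}_X=(\id_Z\ot\gamma^W_X)\circ(\gamma^Z_X\ot\id_W)$ with no assumption on $j_{Z,W}$. Thus $\Psi$ is well defined on all functor-level sections and the half-braidings depend only on the image subcategory; what cannot be salvaged is the claim that $\Phi$ and $\Psi$ are mutually inverse when embeddings are counted as tensor functors rather than by their images.
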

\begin{proof}
This is clear  since every embedding $\iota: \A \hookrightarrow  \Z(\A)$
such that $F\circ \iota(\A) = \A$ yields a braiding on $\A$.
\end{proof}


\begin{definition}
\label{def lagr subc}
Let $\A$ be a tensor category. A tensor subcategory $\L \subset \Z(\A)$ is called {\em Lagrangian} if $\L'=\L$.
\end{definition}

For a braided tensor category $\C$ let $\mathbb{L}_0(\C)$ denote
the set of all Lagrangian subcategories $\L \hookrightarrow
\Z(\C)$ such that $\L \cong \C$ as a braided tensor category. This
is a subset of the categorical Lagrangian Grassmannian introduced
in \cite{NR}. Note that the group $\Aut^{br}(\Z(\C))$ acts on
$\mathbb{L}_0(\C)$ by permutation of subcategories:
\[
\alpha\cdot \L = \alpha(\L),\qquad \alpha\in \Aut^{br}(\Z(\C)),\, \L \in \mathbb{L}_0(\C).
\]

\begin{proposition}
\label{stabilizer of C} Let $\C$ be a braided tensor category with
braiding $c_{X,Z}: X\ot Z \xrightarrow{\sim} Z \ot X$ and let
$\iota_c: \C \hookrightarrow \Z(\C)$, $Z \mapsto (Z,\, c_{-,Z})$
be the corresponding  central embedding. Then the stabilizer of
$\iota_c(\C)$ in $\Aut^{br}(\Z(\C))$ is
\[
\St(\iota_c(\C))\cong  \Pic(\C)\rtimes \Aut^{br}(\C).
\]
\end{proposition}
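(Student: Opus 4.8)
The plan is to exhibit $\St(\iota_c(\C))$ as a split extension of $\Aut^{br}(\C)$ by $\Pic(\C)$. Since $\iota_c$ is a braided tensor embedding and $\iota_c(\C) \cong \C$ as braided categories, every $\alpha \in \St(\iota_c(\C))$ carries the braided subcategory $\iota_c(\C)$ onto itself and therefore restricts to a braided tensor autoequivalence of $\iota_c(\C)$. This gives a homomorphism
\[
\operatorname{res}: \St(\iota_c(\C)) \to \Aut^{br}(\C), \qquad \alpha \mapsto \alpha|_{\iota_c(\C)},
\]
well defined on isomorphism classes, since restriction respects composition and sends isomorphic functors to isomorphic ones. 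First I would identify its kernel: $\operatorname{res}(\alpha)$ is trivial exactly when $\alpha$ restricts to the identity autoequivalence of $\iota_c(\C) \cong \C$, i.e. precisely when $\alpha \in \Aut^{br}(\Z(\C);\, \C)$. By Theorem~\ref{image of Pic} this subgroup is the image of $\Pic(\C)$ under the isomorphism \eqref{main iso}, so $\Ker(\operatorname{res}) \cong \Pic(\C)$.

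Next I would construct a splitting of $\operatorname{res}$ from the induction homomorphism $\Gamma$ of \eqref{induction from Aut}. The claim is that $\Gamma$ restricted to $\Aut^{br}(\C) \subset \Aut(\C)$ lands in $\St(\iota_c(\C))$ and is a section of $\operatorname{res}$. For a braided autoequivalence $\alpha$ of $\C$ and an object $\iota_c(X) = (X,\, c_{-,X})$, the induced object is $\Gamma_\alpha(\iota_c(X)) = (\alpha(X),\, (c_{-,X})^\alpha)$, where the half-braiding $(c_{-,X})^\alpha$ is read off from the commutative diagram defining $\gamma^\alpha$. The decisive point is the identity $(c_{-,X})^\alpha = c_{-,\alpha(X)}$: granting it, $\Gamma_\alpha \circ \iota_c = \iota_c \circ \alpha$, so $\Gamma_\alpha$ preserves $\iota_c(\C)$ and restricts to $\alpha$ there. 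As $\Gamma$ is a group homomorphism, $\Gamma|_{\Aut^{br}(\C)}$ is then a group-homomorphic section of $\operatorname{res}$; in particular $\operatorname{res}$ is surjective.

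The hard part will be verifying the identity $(c_{-,X})^\alpha = c_{-,\alpha(X)}$. To do so I would substitute $Z = X$ and $\gamma = c_{-,X}$ into the diagram defining $\gamma^\alpha$, so that its bottom arrow becomes $\alpha(c_{\alpha^{-1}(Y), X})$, and then invoke the braided-functor axiom for $\alpha$, namely $\alpha(c_{A,B}) \circ J_{A,B} = J_{B,A} \circ c_{\alpha(A),\alpha(B)}$, with $A = \alpha^{-1}(Y)$ and $B = X$. This axiom says precisely that $J_{X,\alpha^{-1}(Y)}^{-1} \circ \alpha(c_{\alpha^{-1}(Y),X}) \circ J_{\alpha^{-1}(Y),X} = c_{Y,\alpha(X)}$, which is exactly the middle segment of the diagram; combined with the canonical identifications $\alpha(\alpha^{-1}(Y)) \cong Y$ on the two outer edges it yields $\gamma^\alpha_Y = c_{Y,\alpha(X)}$, as needed. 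This is the only genuine (and short) diagram chase in the argument. Once the splitting is in place, the short exact sequence
\[
1 \to \Pic(\C) \to \St(\iota_c(\C)) \xrightarrow{\operatorname{res}} \Aut^{br}(\C) \to 1
\]
splits with normal kernel $\Pic(\C)$, and I conclude $\St(\iota_c(\C)) \cong \Pic(\C) \rtimes \Aut^{br}(\C)$, the action of $\Aut^{br}(\C)$ on $\Pic(\C)$ being conjugation through the section $\Gamma$.
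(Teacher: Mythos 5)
Your proof is correct and is essentially the argument the paper invokes by citing \cite[Proposition 6.8]{NR}: the restriction homomorphism $\St(\iota_c(\C))\to\Aut^{br}(\C)$ with kernel $\Aut^{br}(\Z(\C);\,\C)\cong\Pic(\C)$ (via Theorem~\ref{image of Pic}), split by the induction homomorphism $\Gamma$ of \eqref{induction from Aut}, whose compatibility $\Gamma_\alpha\circ\iota_c\cong\iota_c\circ\alpha$ for braided $\alpha$ follows from the braided-functor axiom exactly as in your diagram chase. Nothing is missing; your write-up just makes explicit the details that the paper leaves to the reference.
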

\begin{proof}
The proof is the same as  \cite[Proposition 6.8]{NR}.
\end{proof}

\section{The Hopf algebra $E(n)$}

\subsection{Definition and basic properties}
\label{En section}

Here we recall a description of  a class of non-semisimple Hopf algebras with symmetric
representation categories, cf.\ Remark~\ref{Deligne}.

\begin{definition}
\label{E(n)}
Let $n$ be a positive integer and $E(n)$ denote the Hopf algebra
with generators $c$, $x_{1}, \dots, x_{n}$, relations
$$
c^{2} = 1, \quad x_{i}^{2} = 0, \quad cx_{i} = -x_{i} c, \quad
x_{i}x_{j} = - x_{j}x_{i}, \quad i, j = 1, \dots n,
$$
and comultiplication, counit and antipode given by
\begin{alignat*}{6}
\Delta (c) & = c \ot c,                        & \quad \varepsilon (c)     &= 1,      & \quad S(c)     & = c  \\
\Delta (x_{i}) & = 1 \ot x_{i} + x_{i} \ot c,  & \quad \varepsilon
(x_{i}) &= 0,      & \quad S(x_{i}) & = cx_{i}
\end{alignat*}
for all $i = 1, \dots, n$.
\end{definition}
The Hopf algebras $E(n)$ were first introduced by Nichols in \cite{N} and
studied in \cite{BDG}, \cite{CD} \cite{PvO1}, \cite{PvO2} and
\cite{CC}. They are pointed Hopf algebras with coradical $kC_{2}$.
Here $C_{2} = \{1, c\}$, and $k$-basis $\{c^{i} x_{P} \mid i = 0,
1, \,\, P \subseteq \{1, \dots, n\} \}$, where, for a subset $P = \{i_{1},
i_{2}, \dots, i_{s}\} \subseteq \{1, 2, \dots, n\}$ such that
$i_{1} < i_{2} < \cdots <i_{s}$, we denote
$x_{P}~=~x_{i_{1}}x_{i_{2}} \cdots x_{i_{s}}$ and $x_{\emptyset} = 1$. We
have $\dim_k(E(n)) = 2^{n+1}$.

For  $P$ as above and  a subset $F =
\{i_{j_{1}}, \dots, i_{j_{r}}\}$ of $P$ define
$$
S (F, P) = \left\{ \begin{array}{lll}
(j_{1} + \cdots + j_{r}) - r (r + 1)/2 & \textnormal{if} & F \neq \emptyset \\
0                                              & \textnormal{if} & F = \emptyset.
\end{array} \right.
$$
Letting $|F|$ denote the number of elements of $F$, we have the
following formula:
$$
\Delta (x_{P}) = \sum_{F \subseteq P} (-1)^{S (F, P)} x_{F} \ot
c^{|F|} x_{P \setminus F}.
$$
The group of Hopf automorphisms of $E(n)$ was computed in
\cite{PvO1}. We have
\begin{equation}
\label{Aut En}
\Aut_{\text{Hopf}} (E(n)) \simeq \GL_{n}(k),
\end{equation}
with the automorphism corresponding to $T = (t_{ij}) \in
\GL_{n}(k)$, being given by $c \mapsto c$ and $x_{i} \mapsto
\sum_{j} t_{ji} x_{j}$, for all $i = 1, \dots, n$. The inner automorphisms of $E(n)$
correspond to $T =\pm I_n$.


Quasi-triangular structures on
$E(n)$ were classified in \cite{PvO1}. They are parameterized by
the set $M_{n}(k)$ of $n$-by-$n$ matrices with coefficients in $k$ and they
are given as follows. First, if $A = (a_{ij}) \in M_{n}(k)$ and $P
= \{i_{1}, i_{2}, \dots, i_{r}\}$ and $F = \{j_{1}, j_{2}, \dots,
j_{r}\}$ are subsets of $\{1, 2, \dots, n\}$ such that $i_{1} <
i_{2} < \cdots < i_{r}$ and $j_{1} < j_{2} < \cdots < j_{r}$ then
we denote by $[A]_{P, F}$ the $r \times r$ minor obtained from $A$
by intersecting the rows $i_{1}, \dots, i_{r}$ with the columns
$j_{1}, \dots, j_{r}$. The $R$-matrix corresponding to $A$ was
described in \cite[Remark 2]{PvO1} as:
\begin{align*}
R_{A} = & \frac{1}{2} (1 \ot 1 + 1 \ot c + c \ot 1 - c \ot c) +
\frac{1}{2} \sum_{|P| = |F|} (-1)^{\frac{|P| (|P| - 1)}{2}} [A]_{P, F} \times \\
& \times ( x_{P} \ot c^{|P|}x_{F} + c x_{P} \ot c^{|P|}x_{F} +
x_{P} \ot c^{|P| + 1}x_{F} - cx_{P} \ot c^{|P| + 1} x_{F}),
\end{align*}
where the sum is over all non-empty subsets $P$ and $F$ of $\{1,
\dots, n\}$. We will use the following equivalent expression for
$R_{A}$:
\begin{align*}
R_{A} = & \frac{1}{2} \sum_{i = 0}^{n} (-1)^{\frac{i (i - 1)}{2}}
\sum_{|P| = |F| = i} [A]_{P, F} ( x_{P} \ot x_{F} + x_{P} \ot cx_{F} + \\
&  + (-1)^{i} c x_{P} \ot x_{F} + (-1)^{i + 1} cx_{P} \ot c x_{F})
\end{align*}
where the sum is over all subsets $P$ and $F$ of $\{1, \dots, n\}$
and the convention is that $[A]_{\emptyset, \emptyset} = 1$. It
was shown in \cite{CC} that the quasi-triangular structure $R_{A}$
is triangular if and only if $A$ is symmetric.

Let $\C_n :=\Rep(E(n))$.

\begin{remark}
The  category $\C_n$ with symmetric braiding is equivalent to the representation
category of a finite supergroup $ \wedge k^n \rtimes \mathbb{Z}/2\mathbb{Z}$. It is
the most general example of a non-semisimple symmetric tensor category without
non-trivial Tannakian subcategories.
\end{remark}

\begin{proposition}
\label{Aut-br Cn}
$\Aut^{br}(\C_n) \cong GL_n(k)/ \{ \pm I_n\}$.
\end{proposition}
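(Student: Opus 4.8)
The plan is to exhibit a surjective homomorphism $\GL_n(k)\cong\Aut_{\text{Hopf}}(E(n))\to\Aut^{br}(\C_n)$ and to identify its kernel with the inner automorphisms $\{\pm I_n\}$. To build the map, send $T\in\GL_n(k)$ to the Hopf automorphism $a_T$ of \eqref{Aut En} and then to the tensor autoequivalence $F_{a_T}\in\Aut(\C_n)$ of Example~\ref{twisted modules1}. The symmetric braiding of $\C_n$ is the one coming from the triangular structure $R_A$ with $A=0$, an element that only involves $1$ and $c$; since $a_T(c)=c$ we get $(a_T\ot a_T)(R_0)=R_0$. A direct check shows that $F_a$ is braided with respect to the $R$-matrix $R$ exactly when $(a\ot a)(R)=R$, so every $F_{a_T}$ is a braided autoequivalence and $T\mapsto[F_{a_T}]$ is a group homomorphism into $\Aut^{br}(\C_n)$.

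Next I would compute the kernel. The autoequivalence $F_a$ (which is the identity on underlying objects) is isomorphic to $\id_{\C_n}$ as a \emph{tensor} functor precisely when $a$ is conjugation by an invertible element implementing a tensor natural isomorphism; compatibility with the tensor structure forces that element to be grouplike. The grouplikes of $E(n)$ are $\{1,c\}$, and conjugation by $c$ sends $x_i\mapsto -x_i$, i.e. it equals $a_{-I_n}$. Hence the kernel of $T\mapsto[F_{a_T}]$ is exactly $\{\pm I_n\}$, in agreement with the stated description of the inner automorphisms, and we obtain an injection $\GL_n(k)/\{\pm I_n\}\hookrightarrow\Aut^{br}(\C_n)$.

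The principal obstacle is surjectivity: I must rule out braided autoequivalences that do not come from Hopf automorphisms, in particular those induced by nontrivial invariant twists. Here the key input is the uniqueness, up to isomorphism, of the super-fiber functor $\omega:\C_n\to\sVec$ recorded in Remark~\ref{Deligne}. Given $F\in\Aut^{br}(\C_n)$, the composite $\omega\circ F$ is again a symmetric tensor functor to $\sVec$, hence a super-fiber functor, so by uniqueness there is a braided natural isomorphism $\omega\circ F\cong\omega$. This trivialization exhibits $F$ as an automorphism of the pair $(\C_n,\omega)$, which super-Tannakian reconstruction identifies with a Hopf(-super)algebra automorphism of $E(n)$, i.e. some $a_T$. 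The trivialization is determined only up to an automorphism of $\omega$ — that is, up to a grouplike of $E(n)$, again $\{1,c\}$ — so the class of $F$ depends only on $a_T$ modulo $\{\pm I_n\}$.

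Putting these together gives both the inverse map and surjectivity, yielding $\Aut^{br}(\C_n)\cong\GL_n(k)/\{\pm I_n\}$. The crucial mechanism that eliminates twist autoequivalences $F_J$ is that $\omega\circ F$ is \emph{forced} to be braided-isomorphic to $\omega$: a genuinely nontrivial twist would in general either destroy braidedness of $\omega\circ F_J$ or change its isomorphism class as a super-fiber functor, so only Hopf automorphisms survive. I expect the delicate step to be this surjectivity argument, specifically making the passage from ``$\omega\circ F\cong\omega$'' to ``$F\cong F_{a_T}$'' precise via super-Tannakian duality; the construction, braidedness, and kernel computations are routine by comparison.
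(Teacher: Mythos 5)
Your proposal is correct and takes essentially the same route as the paper: surjectivity comes from the uniqueness of the super-fiber functor (Remark~\ref{Deligne}) together with Tannakian reconstruction, which forces every braided autoequivalence to arise from a Hopf automorphism in $\Aut_{\text{Hopf}}(E(n))\cong GL_n(k)$, and the kernel is the group of inner automorphisms $\{\pm I_n\}$ generated by conjugation by the grouplike $c$. Your explicit verification that $(a_T\ot a_T)(R_0)=R_0$, so that each $F_{a_T}$ is genuinely braided, is a detail the paper leaves implicit.
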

\begin{proof}
By \cite{De} the symmetric category $\C_n$ has a unique, up to isomorphism, braided tensor functor to $\sVec$.
Let $F$ denote the composition of this functor with the forgetful functor $\sVec\to \Vec$.  Then $E(n)\cong\End(F)$.
Since every braided tensor auto\-equivalence of $\C_n=\Rep(E(n))$ preserves $F$ it must come from a Hopf automorphism
of $E(n)$.  By  \eqref{Aut En} we have $\Aut_{\text{Hopf}}(E(n))= GL_n(k)$. Tensor auto\-equivalences of $\C_n$ isomorphic to
the identity functor  come from inner automorphisms of $E(n)$.  The statement follows from the observation that the
group of inner Hopf automorphisms of $E(n)$ is generated by the conjugation by $c$ and is isomorphic to $\{ \pm I_n \}$.
\end{proof}

\subsection{Invariant $2$-cocycles and invariant twists on $E(n)$}
\label{inv cocycles on En section}

It was shown in \cite{BC}, that the second invariant cohomology group $\Hinv(E(n))$
is isomorphic to $\Sym_{n}(k)$, which we can identify with the additive group of
lower triangular $n \times n$ matrices with entries in $k$. A
representative of  the cohomology class corresponding to $M =
(m_{ij}) \in \Sym_{n} (k)$ is the invariant 2-cocycle $\sigma_{M} : E(n)
\otimes E(n) \to k$ defined by:
$$
\sigma_{M} (c\ot  c) = 1,  \quad \sigma_{M} (x_{i}\ot  x_{j}) = m_{ij},
\quad i, j = 1, \dots, n,
$$
$$
\sigma_{M} (x_{P}\ot x_{Q}) = \sigma_{M} (cx_{P}\ot x_{Q})  =
(-1)^{|P|} \sigma_{M} (x_{P}\ot  cx_{Q}) = (-1)^{|P|} \sigma_{M}
(cx_{P}\ot cx_{Q})
$$
for all $P$, $Q \subseteq \{1, \dots, n\}$,
\begin{align*}
\sigma_{M} (x_{P}\ot  x_{Q}) = 0 \quad \textnormal{ if } \quad |P|
\neq |Q|,
\end{align*}
and some recurrence formula allowing to compute $\sigma_{M}
(x_{P}, x_{Q})$ when $|P| = |Q|$. In particular, we have
$\sigma_{M} (c^{i}x_{k}\ot c^{j}x_{l}) = (-1)^{j} m_{kl}$, for all
$i, j = 0, 1$ and $k, l = 1, \dots, n$.

Since $E(n)$ is self-dual, an Hopf algebra isomorphism $E(n) \to
E(n)^{*}$ being given by $1 \mapsto 1^{*} + c^{*}$, $c \mapsto
1^{*} - c^{*}$, $x_{i} \mapsto x_{i}^{*} + (cx_{i})^{*}$, where
$\{ (c^{i}{x_{P}})^{*} \}_{i, P}$ is the dual basis of
$\{c^{i}x_{P}\}_{i, P}$, we obtain, by duality, that the invariant dual
cohomology group of $E(n)$, $\Hinv(E(n)^*)$,
is also isomorphic to $\Sym_{n} (k)$. A representative for the
cohomology class corresponding to $M = (m_{ij}) \in \Sym_{n} (k)$ is
the invariant twist
$$
J_{M} = \frac{1}{4} \sum_{i, j, P, Q} \sigma_{M} (c^{i}x_{P}\ot
c^{j} x_{Q}) (x_{P} + (-1)^{i} cx_{P}) \ot (x_{Q} +
(-1)^{j}cx_{Q}).
$$

\subsection{The Drinfeld double of $E(n)$}

Composing the two Hopf algebra isomorphisms $E(n) \to E(n)^{*}$,
$c \mapsto 1^{*} - c^{*}$, $x_{i} \mapsto x_{i}^{*} +
(cx_{i})^{*}$ and $E(n)^{\text{cop}} \to E(n)$, $c \mapsto c$,
$x_{i} \mapsto cx_{i}$, we obtain the isomorphism $E(n) \to
E(n)^{*\text{cop}}$, $c \mapsto 1^{*} - c^{*}$, $x_{i} \mapsto
x_{i}^{*} - (cx_{i})^{*}$. Thus, the Drinfeld double, $D(E(n))$,
is generated by two copies of $E(n)$. Let $C = 1^{*} - c^{*}$ and
$X_{i} = x_{i}^{*} - (cx_{i})^{*}$, $i = 1, \dots, n$. Then,
viewing $E(n)$ and $E(n)^{* \text{cop}}$ as Hopf subalgebras of $D
(E(n))$ and taking into account (\ref{multiplication in D(H)}), we
see that $D (E(n))$ is generated by the grouplike elements $c$ and
$C$, the $(1, c)$-primitive elements $x_{1}, \dots, x_{n}$ and the
$(1, C)$-primitive elements $X_{1}, \dots, X_{n}$, subject to the
following relations:
\begin{gather}
\label{DEn1}
c^{2} = 1, \,\,  x_{i}^{2} = 0, \,\, x_{i}c + cx_{i} = 0, \,\,
x_{i} x_{j} + x_{j} x_{i} = 0, \\
\label{DEn2}
C^{2} = 1, \,\,  X_{i}^{2} = 0, \,\, X_{i}C + CX_{i} = 0, \,\,
X_{i} X_{j} + X_{j} X_{i} = 0,\\
\label{DEn3}
cC = Cc, \,\,  X_{i}c + cX_{i} = x_{i}C + Cx_{i} = 0, \,\, x_{i}
X_{j} + X_{j} x_{i} = \delta_{i,j} (1 - Cc),
\end{gather}
for all $i, j = 1, \dots, n$, where $\delta_{i,j}$ is Kronecker's
delta.


\begin{lemma}
If $P$ is a subset of $\{1, \dots, n\}$ then
\begin{eqnarray*}
x_{P}^{*} & = (-1)^{\frac{|P| (|P| - 1)}{2}} (X_{P} + CX_{P}) \\
(cx_{P})^{*} & = (-1)^{\frac{|P| (|P| + 1)}{2}} (X_{P} - CX_{P}).
\end{eqnarray*}
\end{lemma}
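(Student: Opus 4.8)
The plan is to compute the two functionals $X_{P}=X_{i_{1}}\cdots X_{i_{s}}$ and $CX_{P}$ explicitly in the dual basis $\{(c^{j}x_{Q})^{*}\}$ of $E(n)^{*}$ and then invert the resulting linear relations. The key reduction is that, since $C=1^{*}-c^{*}$ and $X_{i}=x_{i}^{*}-(cx_{i})^{*}$ are by definition the images of $c$ and $x_{i}$ under the Hopf algebra isomorphism $E(n)\to E(n)^{*\text{cop}}$, multiplicativity gives $X_{P}=\Phi(x_{P})$ and $CX_{P}=\Phi(cx_{P})$, where the products are taken in $E(n)^{*\text{cop}}$. The multiplication there is the convolution product on $E(n)^{*}$ dual to the comultiplication of $E(n)$, so it suffices to evaluate $X_{P}$ and $CX_{P}$ on the basis $\{c^{j}x_{Q}\}$ using the explicit formula for $\Delta(x_{Q})$ recorded above.

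First I would compute $X_{P}$. Writing $P=\{i_{1}<\cdots<i_{s}\}$, convolution gives $X_{P}(c^{j}x_{Q})=(X_{i_{1}}\ot\cdots\ot X_{i_{s}})\big(\Delta^{(s-1)}(c^{j}x_{Q})\big)$, where $\Delta^{(s-1)}$ is the iterated coproduct. Each $X_{i}$ is supported on $\{x_{i},cx_{i}\}$ with $X_{i}(c^{b}x_{\{i\}})=(-1)^{b}$, so a summand of $\Delta^{(s-1)}(c^{j}x_{Q})$ contributes only if its $a$-th tensor slot is a single $x_{i_{a}}$; this forces $Q=P$ and singles out a unique ``diagonal'' term of $\Delta^{(s-1)}(x_{P})$. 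Reading it off from $\Delta(x_{i})=1\ot x_{i}+x_{i}\ot c$, that term is $\bigotimes_{b=1}^{s}c^{b-1}x_{i_{b}}$, and after inserting the prefactors $c^{j}$ one finds $X_{P}(c^{j}x_{P})=\prod_{b=1}^{s}(-1)^{j+b-1}=(-1)^{|P|j}(-1)^{|P|(|P|-1)/2}$, all other values vanishing. Hence $X_{P}=(-1)^{|P|(|P|-1)/2}\big(x_{P}^{*}+(-1)^{|P|}(cx_{P})^{*}\big)$.

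For $CX_{P}$ I would use that left convolution by $C$ satisfies $(Cf)(c^{j}x_{Q})=(-1)^{j}f(c^{j}x_{Q})$: since $C$ is supported on $\{1,c\}$ with $C(c^{j})=(-1)^{j}$, only the $F=\emptyset$ term of $\Delta(c^{j}x_{Q})$ survives. Applied to $f=X_{P}$ this merely reverses the sign of the $(cx_{P})^{*}$-component, giving $CX_{P}=(-1)^{|P|(|P|-1)/2}\big(x_{P}^{*}-(-1)^{|P|}(cx_{P})^{*}\big)$. Solving the two equations for $x_{P}^{*}$ and $(cx_{P})^{*}$, and using the identity $\tfrac{|P|(|P|-1)}{2}+|P|=\tfrac{|P|(|P|+1)}{2}$ to combine the signs, produces exactly the two displayed formulas. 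I expect the only genuine obstacle to be the sign bookkeeping in the middle step: verifying that the diagonal coproduct term is the unique contributor and that the anticommutation $cx_{i}=-x_{i}c$ introduces no extra sign beyond the factor $(-1)^{|P|(|P|-1)/2}$ coming from the powers $c^{b-1}$. An equivalent route, should one prefer to avoid the iterated coproduct, is a direct induction on $|P|$ using $X_{P}=X_{i_{1}}X_{P\setminus\{i_{1}\}}$ together with the same convolution rule.
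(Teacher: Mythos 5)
Your proposal is correct in substance, and it arrives at exactly the same two intermediate identities as the paper, namely
\[
X_P \;=\; (-1)^{\frac{|P|(|P|-1)}{2}}\bigl(x_P^* + (-1)^{|P|}(cx_P)^*\bigr),
\qquad
CX_P \;=\; (-1)^{\frac{|P|(|P|-1)}{2}}\bigl(x_P^* - (-1)^{|P|}(cx_P)^*\bigr),
\]
but by a genuinely different route. The paper introduces the auxiliary functionals $Y_P = x_P^* + (cx_P)^*$, proves $Y_P = Y_{i_1}\cdots Y_{i_r}$ by induction, computes $CY_P = x_P^* - (cx_P)^*$ by evaluation on the basis, and then gets $C^iX_P = (-1)^{|P|(|P|-1)/2}\,C^{|P|+i}Y_P$ purely algebraically, the sign coming from repeatedly using $Y_jC = -CY_j$ to move the $C$'s to the left. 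You instead evaluate the convolution product $X_P = X_{i_1}\cdots X_{i_s}$ directly on the basis $\{c^jx_Q\}$ via the iterated coproduct, and handle $CX_P$ by the rule that left convolution by $C$ multiplies the value at $c^jx_Q$ by $(-1)^j$. Your diagonal-term argument is sound: since $X_{i_a}$ kills $c^mx_F$ unless $F = \{i_a\}$, only the partition of $Q = P$ into singletons assigned in increasing order survives, and that term does carry coefficient $+1$ (each step of the iteration extracts the smallest remaining index, whose sign $(-1)^{S(\{i_1\},\,\cdot)}$ vanishes), so your value $(-1)^{|P|j}(-1)^{|P|(|P|-1)/2}$ is exactly right. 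The trade-off: your version concentrates all sign bookkeeping in one explicit coproduct computation, while the paper's hides the same combinatorics inside the (also induction-requiring) multiplicativity of $Y_P$ and then does transparent anticommutation algebra in $D(E(n))$.

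One point you should not gloss over: solving your two identities actually yields
\[
x_P^* = \frac{1}{2}(-1)^{\frac{|P|(|P|-1)}{2}}(X_P + CX_P),
\qquad
(cx_P)^* = \frac{1}{2}(-1)^{\frac{|P|(|P|+1)}{2}}(X_P - CX_P),
\]
with a factor $\frac{1}{2}$ that the lemma's display lacks. This is not an error on your side: the lemma as printed is off by a factor of $2$ (test $P = \emptyset$, where $X_\emptyset + CX_\emptyset = \varepsilon + C = 2\cdot 1^*$), and the corrected statement is precisely the paper's own formula \eqref{change of base} in the subsequent Remark, which is what the later computations use. So your closing claim that solving ``produces exactly the two displayed formulas'' silently absorbs a typo of the paper; you should instead state the $\frac{1}{2}$ explicitly and note the discrepancy.
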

\begin{proof}
For $P \subseteq
\{1, \dots, n\}$ define $Y_{P} = x_{P}^{*} + (cx_{P})^{*}$. An
easy argument using induction on $|P|$ shows that, if $P =
\{i_{1}, \dots, i_{r}\}$, with $i_{1} < i_{2} < \cdots < i_{r}$,
then $Y_{P} = Y_{i_{1}} Y_{i_{2}} \cdots Y_{i_{r}}$. Moreover,
since
$$
(1^{*} - c^{*}) ( x_{P}^{*} + (cx_{P})^{*} ) (c^{i}x_{Q}) =
\left\{ \begin{array}{ccc} 0 & \textnormal{if} & Q \neq P \\
(-1)^{i} & \textnormal{if} & Q = P
\end{array} \right.
$$
we have $CY_{P} = x_{P}^{*} - (cx_{P})^{*}$. In particular,
$CY_{i} = X_{i}$, for all $i = 1, \dots, n$, and, because $C$ is
an element of order 2 that anti-commutes with $X_{i}$, we also
have $Y_{i}C = - CY_{i}$, for all $i$. Consider now $i \in \{0,
1\}$ and $P = \{i_{1}, \dots, i_{r}\}$, with $i_{1} < i_{2} <
\cdots < i_{r}$. Then
\begin{align*}
C^{i}X_{P} & = C^{i} X_{i_{1}} X_{i_{2}} \cdots X_{i_{r}} \\
& = C^{i} (CY_{i_{1}}) (CY_{i_{2}}) \cdots (CY_{i_{r}}) \\
& = (-1)^{\frac{r (r-1)}{2}} C^{r + i} Y_{i_{1}} Y_{i_{2}} \cdots
Y_{i_{r}}\\
& = (-1)^{\frac{|P| (|P| - 1)}{2}} C^{|P| + i} Y_{P}\\
& = (-1)^{\frac{|P| (|P| - 1)}{2}} (x_{P}^{*} + (-1)^{|P| + i}
(cx_{P})^{*}).
\end{align*}
From this we easily obtain the formulas for $x_{P}^{*}$ and
$(cx_{P})^{*}$.
\end{proof}

\begin{remark} For $i \in \{0,
1\}$ and $P \subseteq \{1, \dots, n\}$ we have
\begin{equation} \label{change of base}
(c^{i}x_{P})^{*} = \frac{1}{2} (-1)^{\frac{|P| (|P| - 1)}{2} +
i|P|} (X_{P} + (-1)^{i}CX_{P})
\end{equation}
\end{remark}

\begin{lemma}
\label{chi}
The algebra $D(E(n))$ has a
unique non-trivial one-dimensional representation, $\chi : D(E(n))
\to k$, defined by
$$
\chi (C) = \chi(c) = -1, \quad \chi(x_{i}) = \chi(X_{i}) = 0,
\quad i = 1, \dots, n
$$
\end{lemma}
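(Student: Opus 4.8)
The plan is to observe that a one-dimensional representation of $D(E(n))$ is the same thing as an algebra homomorphism $\chi : D(E(n)) \to k$, and that such a map is completely determined by the scalars it assigns to the generators $c$, $C$, $x_1, \dots, x_n$, $X_1, \dots, X_n$. Since $k$ is commutative, an assignment of scalars extends to an algebra homomorphism precisely when those scalars satisfy the defining relations \eqref{DEn1}--\eqref{DEn3}. Thus the entire argument reduces to reading off the constraints these relations impose, and then checking that the surviving assignments do satisfy every relation.

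First I would evaluate the relations involving squares. From $x_i^2 = 0$ and $X_i^2 = 0$ we get $\chi(x_i)^2 = \chi(X_i)^2 = 0$, hence $\chi(x_i) = \chi(X_i) = 0$ for all $i$. From $c^2 = 1$ and $C^2 = 1$ we get $\chi(c), \chi(C) \in \{1, -1\}$. Once the odd generators are sent to $0$, all the anticommutation relations in \eqref{DEn1} and \eqref{DEn2}, together with the relations $X_i c + c X_i = x_i C + C x_i = 0$ and $cC = Cc$ in \eqref{DEn3}, hold automatically.

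The decisive step is the remaining relation $x_i X_j + X_j x_i = \delta_{i,j}(1 - Cc)$ from \eqref{DEn3}. Its left-hand side maps to $2\chi(x_i)\chi(X_j) = 0$, while its right-hand side maps to $\delta_{i,j}\bigl(1 - \chi(C)\chi(c)\bigr)$. Taking $i = j$ forces $\chi(c)\chi(C) = 1$. Combined with $\chi(c), \chi(C) \in \{\pm 1\}$, this leaves exactly two possibilities: $\chi(c) = \chi(C) = 1$, which is the counit $\varepsilon$, and $\chi(c) = \chi(C) = -1$, which is the claimed character. Conversely, both assignments satisfy every relation, so both extend to genuine algebra homomorphisms. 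Hence $D(E(n))$ has precisely one non-trivial one-dimensional representation, namely $\chi$.

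There is no serious analytic obstacle here; the only point requiring care is completeness, i.e.\ confirming that \emph{every} defining relation has been accounted for, so that the two candidate assignments really do extend to algebra homomorphisms rather than merely satisfying a subset of the constraints. The mild subtlety worth highlighting is that the mixed commutator relation in \eqref{DEn3} is exactly what couples $\chi(c)$ to $\chi(C)$ and rules out the two ``mismatched'' sign choices, forcing $\chi(c) = \chi(C)$.
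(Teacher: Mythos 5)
Your proof is correct and follows essentially the same route as the paper: evaluate $\chi$ on the defining relations \eqref{DEn1}--\eqref{DEn3} to get $\chi(x_i)=\chi(X_i)=0$ and $\chi(c)^2=\chi(C)^2=1$, then use the mixed relation $x_iX_i+X_ix_i=1-Cc$ to force $\chi(cC)=1$, leaving only $\varepsilon$ and the stated $\chi$. The only difference is that you spell out the converse verification (that both sign assignments satisfy all relations), which the paper leaves implicit.
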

\begin{proof}
It follows from relations \eqref{DEn1}-\eqref{DEn3} that for
a one-dimensional  representation $\chi: D(E(n))\to k$ one has
$\chi(X_i)=\chi(x_i)=0$ for all $i=1,\dots,n$, $\chi(c)^2=\chi(C)^2=1$,
and $\chi(cC)=1$. This implies the claim.
\end{proof}

\subsection{The Picard group of $\C_n$}
\label{Picard Cn section}

The group $\Pic(\C_n)$ corresponding to the symmetric braiding of $\C_n$ (or, equivalently, the Brauer
group of the Hopf algebra $E(n)$ with quasi-triangular structure $R_0$ (see Section~\ref{En section}))
was computed by Carnovale and Cuadra in \cite{CC}.  This group is isomorphic to $\Sym_n(k)\times \mathbb{Z}/2\mathbb{Z}$.
The torsion subgroup  of $\Pic(\C_n)$  consists of elements induced from $\sVec \subset \C_n$ (see Section~\ref{induction section}).

Let us describe the connected component  of the identity $\Pic_0(\C_n)\subset \Pic(\C_n)$, i.e.,
\[
\Pic_0(\C_n) :=  \Sym_n(k)
\]
in a way suitable for our purposes.   For this end, we identify
$\Pic(\C_n)$  with  the group $\Aut^{br}(\Z(\C_n); \C_n)$ via
Theorem~\ref{image of Pic}.  Note that each $\sigma\in
\Hinv(E(n))$ satisfies condition \eqref{R-invariance of sigma}
and so by Proposition~\ref{Hinv to Pic} the assignment
\begin{equation}
\label{Gamma-sigma}
\Hinv(E(n)) \to \Aut^{br} \big (\Z(\C_n) \big): \sigma \mapsto \Gamma_{F_\sigma},
\end{equation}
where $\Gamma_{F_\sigma}$ is defined in Example~\ref{twisted
YD-modules2}, takes values in $\Aut^{br}(\Z(\C_n); \C_n)$.   It
follows from \cite[Theorem 5.3]{CC}  that \eqref{Gamma-sigma}
restricts to an isomorphism between $\Hinv(E(n))$ and
$\Pic_0(\C_n)$.

\section{The categorical Lagrangian Grassmannian of $\C_n$}

\subsection{Subcategories of $\Z(\C_n)$}

\begin{lemma}
\label{surjective maps D(E(n)) --> E(n)}
The set of surjective Hopf algebra maps $D (E(n)) \to E(n)$ is in
bijection with the set of $n \times 2n$ matrices of rank $n$. The
homomorphism $f$ corresponding to $(A | B) \in M_{n \times 2n} (k)$, where
$A = (a_{ij})$ and $B = (b_{ij})$ are $n \times n$ matrices, is
given by
\begin{equation} \label{surjective maps}
f (C) = f (c) = c, \quad f (X_{i}) = \sum_{j = 1}^{n} a_{ji}x_{j},
\quad f (x_{i}) = \sum_{j = 1}^{n} b_{ji}x_{j}, \quad i = 1,
\dots, n
\end{equation}
\end{lemma}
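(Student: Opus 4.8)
The plan is to use Proposition~\ref{subcategories of Rep(H)} to reduce the problem to classifying surjective Hopf algebra homomorphisms, and then to exploit the explicit generators and relations \eqref{DEn1}--\eqref{DEn3} of $D(E(n))$ together with the known structure of $E(n)$. First I would observe that any Hopf algebra homomorphism $f: D(E(n)) \to E(n)$ must send grouplike elements to grouplike elements and $(1,g)$-skew-primitive elements to $(1,f(g))$-skew-primitive elements. Since the only grouplikes in $E(n)$ are $1$ and $c$, and $f$ is surjective (so $f$ cannot be trivial on the coradical), both grouplikes $c$ and $C$ of $D(E(n))$ must map to $c$; this forces $f(c) = f(C) = c$.

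Next I would analyze the skew-primitive generators. The space of $(1,c)$-primitive elements of $E(n)$ is spanned by $x_1, \dots, x_n$ together with the trivial primitive $1 - c$; but a genuine skew-primitive generator coming from the $x_i$ and $X_i$ of $D(E(n))$ must land in the span of the $x_j$ (the element $1-c$ contributes nothing new to a Hopf map determined on generators, and matching counits $\eps(x_i) = \eps(X_i) = 0$ rules out the grouplike-difference term). Thus I would write $f(X_i) = \sum_j a_{ji} x_j$ and $f(x_i) = \sum_j b_{ji} x_j$ for scalars $a_{ji}, b_{ji} \in k$, which is exactly \eqref{surjective maps}. The key compatibility to check is that these assignments respect the defining relations of $D(E(n))$: the anticommutation relations in \eqref{DEn1}--\eqref{DEn3} are automatically satisfied because the $x_j$ anticommute and square to zero in $E(n)$, and the crucial cross-relation $x_i X_j + X_j x_i = \delta_{i,j}(1 - Cc)$ maps to $0$ on the right-hand side since $f(1 - Cc) = 1 - c^2 = 0$, consistent with the left-hand side mapping into the square-zero ideal spanned by products $x_k x_l$ which anticommute. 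So every matrix $(A|B)$ yields a well-defined Hopf algebra homomorphism.

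The remaining and most delicate point is the rank condition: I claim $f$ is surjective if and only if the $n \times 2n$ matrix $(A|B)$ has rank $n$. The image of $f$ is the Hopf subalgebra of $E(n)$ generated by $c$ and the elements $\{\sum_j a_{ji} x_j, \sum_j b_{ji} x_j\}_i$. Since these primitive images all lie in the $n$-dimensional space $W = \mathrm{span}_k(x_1, \dots, x_n)$, and the subalgebra they generate together with $c$ is all of $E(n)$ precisely when these images span $W$, I would argue that surjectivity is equivalent to the $2n$ vectors (the columns of $A$ and $B$, i.e.\ the rows of $(A|B)^{T}$) spanning $k^n$, which is exactly the statement $\rank(A|B) = n$. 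The main obstacle here is verifying carefully that the $2n$ skew-primitive images generate all of $E(n)$ over $kC_2$ exactly when they span $W$; this uses that $E(n)$ is generated as an algebra by $c$ and $x_1, \dots, x_n$, that the $x_i$ generate the exterior-algebra part $\wedge W$, and that a square-zero anticommuting family spans the degree-one part iff it spans $W$ linearly.

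Finally I would confirm the claimed bijection is genuinely with matrices rather than equivalence classes: since $E(n)$ is being fixed as the target (not allowed to vary up to isomorphism), distinct matrices $(A|B)$ give distinct homomorphisms because $f$ is determined by its values on the generators $X_i, x_i$, and these values recover the entries $a_{ji}, b_{ji}$ uniquely. This establishes the desired bijection between surjective Hopf maps $D(E(n)) \to E(n)$ and rank-$n$ matrices in $M_{n \times 2n}(k)$.
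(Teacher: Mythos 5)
Your overall strategy is the same as the paper's (pin down the images of the grouplikes, then of the skew-primitive generators, check the defining relations, and translate surjectivity into the rank condition), and your surjectivity step is actually cleaner than the paper's: identifying the image of $f$ with $\wedge W' \oplus c\,\wedge W'$, where $W'\subseteq \operatorname{span}\{x_1,\dots,x_n\}$ is the column span of $(A|B)$, immediately gives that $f$ is onto iff $\operatorname{rank}(A|B)=n$, whereas the paper argues this by a more laborious decomposition of $D(E(n))$ into basis blocks. However, the first half of your argument has two genuine gaps. The claim that surjectivity forces $f(C)=f(c)=c$ does not follow from ``$f$ cannot be trivial on the coradical''; that observation only excludes $(f(C),f(c))=(1,1)$. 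In the mixed case $f(C)=1$, $f(c)=c$, the elements $f(X_i)$ are primitive, hence zero (a finite-dimensional Hopf algebra in characteristic $0$ has no nonzero primitives), but the subalgebra generated by $c$ and the $f(x_i)$ could perfectly well be all of $E(n)$, so surjectivity alone does not rule this case out. What kills it is the cross-relation $x_iX_i+X_ix_i=1-Cc$ from \eqref{DEn3}: applying $f$ yields $0=1-c$, a contradiction. Without this argument your bijection is not established, since a priori there could exist surjective Hopf maps not of the form \eqref{surjective maps}.

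The second gap is in your treatment of the $(1,c)$-primitive space. Your reason for discarding the $1-c$ component of $f(X_i)$ and $f(x_i)$ is incorrect: $\eps(1-c)=1-1=0$, so ``matching counits'' imposes no constraint at all on the coefficient of $1-c$, and the phrase ``contributes nothing new to a Hopf map determined on generators'' is not an argument. The correct reasoning, used in the paper, comes again from the relations: writing $f(x_i)=b(1-c)+\sum_j b_{ji}x_j$ and using $c(1-c)=(1-c)c=c-1$, the relation $x_ic+cx_i=0$ forces $-2b(1-c)=0$, hence $b=0$ (and similarly $X_iC+CX_i=0$ forces the analogous coefficient in $f(X_i)$ to vanish). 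Both gaps are repairable by invoking the relations \eqref{DEn1}--\eqref{DEn3} at the right moments, and with those repairs your proof is correct and essentially the paper's, with a nicer surjectivity step.
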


\begin{proof}
Let $f : D (E (n)) \to E (n)$ be a Hopf algebra map. Since $C$ and
$c$ are group-like elements of $D (E(n))$, we have $f(C)$, $f(c)
\in G(E(n)) = \{1, c\}$. If $(f (C), f(c)) = (1,1)$ then
$f(X_{i})$ and $f (x_{i})$, $i = 1, \dots, n$, are primitive
elements of $E(n)$, so $f(X_{i}) = f(x_{i}) = 0$, for all $i = 1,
\dots, n$. Thus, $f$ is the trivial homomorphism, $f(h) =
\varepsilon (h)1$, which is not surjective. If $(f (C), f(c)) =
(1, c)$ then $f (X_{i}) = 0$, for all $i = 1, \dots, n$. Applying
$f$ to the relation $x_{i}X_{i} + X_{i}x_{i} = 1 - Cc$ we obtain
$0 = 1 - c$, which is not possible. Similarly, if $(f (C), f(c)) =
(c, 1)$. If $(f(C), f(c)) = (c, c)$ then $f (X_{i})$ and $f
(x_{i})$ are $(1, c)$-primitive elements of $E(n)$, for all $i =
1, \dots, n$. Since  the space of $(1,\,c)$-primitive elements of $E(n)$ is $k (1-c) \oplus k
x_{1} \oplus \cdots \oplus kx_{n}$ it follows that there exist
$a$, $b$, $a_{ij}$, $b_{ij} \in k$, $i, j = 1, \dots, n$, such
that $f (X_{i}) = a (1 - c) + \sum_{j} a_{ji}x_{j}$ and $f (x_{i})
= b (1 - c) + \sum_{j}b_{ji}x_{j}$, for all $i = 1, \dots, n$.
Using the relations $x_{i} c + cx_{i} = 0$ and $X_{i}C + CX_{i} =
0$, we readily deduce that $a = b = 0$. Since the remaining
relations impose no other restrictions on the scalars $a_{ij}$ and
$b_{ij}$ we are left to see under what conditions is the
homomorphism associated to these scalars surjective. We claim that
$f$ is surjective if and only if $f$ maps $U = \text{span}
\{X_{1}, \dots, X_{n}, x_{1}, \dots, x_{n} \}$ onto $\text{span}
\{x_{1}, \dots, x_{n} \}$. For this, it suffices to prove that if
$x_{i}$ is in the image of $f$ than it is in the image of the
restriction of $f$ to $U$. Suppose $x_{i} = f(h)$, for some $h \in
D(E(n))$. Since $B = \{C^{j}X_{P}c^{l}x_{Q} \mid j, l \in \{0,
1\}, P, Q \subseteq \{1, \dots, n\}\}$ is a basis of $D(E(n))$
there exist $u \in U$, $v \in V = \text{span} \{ CX_{j}c, Ccx_{j}
\mid j = 1, \dots, n\} $ and $w \in W = \text{span} \, B \setminus
\{X_{j}, x_{j}, CX_{j}c, Ccx_{j} \mid j = 1, \dots, n\}$ such that
$h = u + v + w$. Now $f (u), f(v) \in \text{span} \{x_{1}, \dots,
x_{n}\}$ and $f(w) \in \text{span} \{c^{j}x_{P}\} \setminus
\{x_{1}, \dots, x_{n}\}$, so, from $x_{i} = f(u) + f(v) + f(w)$ we
deduce that $f(w) = 0$. Taking into account that $f (CX_{j}c) = -
f(X_{j})$ and $f(Ccx_{j}) = f(x_{j})$, for all $j = 1, \dots, n$,
we see that $f (v) \in f(U)$, hence $x_{i} \in f(U)$. Thus, $f$ is
surjective if and only if $f$ maps $U$ onto $\text{span} \{x_{1},
\dots, x_{n} \}$. In terms of the scalars $a_{ij}$ and $b_{ij}$
this is equivalent to saying that the rank of the $n \times 2n$
matrix $(A | B)$, where $A = (a_{ij})$ and $B = (b_{ij})$, is $n$.
This proves the lemma.
\end{proof}

\begin{proposition}
\label{L(Cn)} The set $\mathbb{L} (\C_n)$ of subcategories of
$\Z(\C_n)$ equivalent to $\C_n$ as a tensor category is identified
with $\textnormal{Gr} \,(n ,2n)$, the Grassmannian of
$n$-dimensional subspaces of a $2n$-dimensional vector space.
\end{proposition}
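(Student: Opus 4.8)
The plan is to combine two earlier results: Proposition~\ref{subcategories of Rep(H)}, which identifies tensor subcategories of $\Rep(H)$ with classes of surjective Hopf algebra quotients $H \to K$ up to isomorphism, and Lemma~\ref{surjective maps D(E(n)) --> E(n)}, which parametrizes surjective Hopf maps $D(E(n)) \to E(n)$ by $n \times 2n$ matrices $(A\mid B)$ of rank $n$. Since $\Z(\C_n) = \Rep(D(E(n)))$ by Example~\ref{YD as center}, a subcategory of $\Z(\C_n)$ equivalent to $\C_n = \Rep(E(n))$ as a \emph{tensor} category should, by the Tannakian reasoning in Proposition~\ref{subcategories of Rep(H)}, correspond to a surjection $D(E(n)) \to K$ with $K \cong E(n)$ as a Hopf algebra.

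First I would show that every subcategory in $\mathbb{L}(\C_n)$ arises from a surjection onto a Hopf algebra isomorphic to $E(n)$ itself, so that Lemma~\ref{surjective maps D(E(n)) --> E(n)} applies verbatim. The point is that a tensor subcategory $\A \subset \Z(\C_n)$ equivalent to $\C_n$ as a tensor category must be $\Rep(K)$ for the quotient Hopf algebra $K = \End(F \circ \iota)$ of $D(E(n))$, and the tensor equivalence $\A \cong \C_n = \Rep(E(n))$ forces $K \cong E(n)$ (every $2^{n+1}$-dimensional pointed Hopf algebra realizing $\C_n$ is $E(n)$, and $E(n)$ is the unique Hopf algebra with $\Rep(E(n)) \cong \C_n$ admitting the relevant fiber functor). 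Thus $\mathbb{L}(\C_n)$ is in bijection with classes of surjections $D(E(n)) \twoheadrightarrow E(n)$ modulo post-composition with $\Aut_{\text{Hopf}}(E(n))$.

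Next I would translate that quotient set into a Grassmannian. By Lemma~\ref{surjective maps D(E(n)) --> E(n)} the surjections are the rank-$n$ matrices $(A\mid B) \in M_{n\times 2n}(k)$, and by Proposition~\ref{subcategories of Rep(H)} two surjections $f, f'$ give the \emph{same} subcategory precisely when $f' = a \circ f$ for some $a \in \Aut_{\text{Hopf}}(E(n))$. Using \eqref{Aut En}, such an $a$ corresponds to $T \in \GL_n(k)$ acting by $x_i \mapsto \sum_j t_{ji}x_j$; composing with $f$ via \eqref{surjective maps} replaces $(A\mid B)$ by $(TA \mid TB) = T\,(A\mid B)$. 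Hence the subcategories are classified by rank-$n$ matrices $(A\mid B)$ modulo left multiplication by $\GL_n(k)$, which is exactly the data of an $n$-dimensional subspace of $k^{2n}$, namely the row span of $(A\mid B)$. This gives the identification $\mathbb{L}(\C_n) \cong \Gr(n,2n)$.

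The main obstacle I anticipate is the rigidity step in the previous paragraph: verifying that a subcategory equivalent to $\C_n$ \emph{as a tensor category} (not as a braided or symmetric one) really forces the quotient $K$ to be isomorphic to $E(n)$ as a Hopf algebra, so that Lemma~\ref{surjective maps D(E(n)) --> E(n)} covers all cases. A priori the quotient $K$ is only determined as an abstract Hopf algebra with $\Rep(K) \cong \Rep(E(n))$, and one must rule out other Hopf algebras with this representation category or argue via the fiber functor supplied by the forgetful functor of $\Z(\C_n)$ that $K \cong E(n)$. Once this is secured, the remainder is the routine unwinding of the $\GL_n(k)$-action described above, with no further hard computation.
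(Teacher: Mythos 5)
Your proposal follows essentially the same route as the paper: it invokes Proposition~\ref{subcategories of Rep(H)} to identify $\mathbb{L}(\C_n)$ with equivalence classes of surjective Hopf algebra maps $D(E(n)) \to E(n)$, then uses Lemma~\ref{surjective maps D(E(n)) --> E(n)} together with the description \eqref{Aut En} of $\Aut_{\text{Hopf}}(E(n))$ to reduce to rank-$n$ matrices $(A \mid B)$ modulo left multiplication by $\GL_n(k)$, whose quotient is exactly $\Gr(n,2n)$. The ``rigidity'' obstacle you flag --- that a subcategory tensor-equivalent to $\C_n$ must come from a quotient Hopf algebra isomorphic to $E(n)$ itself, i.e., that no Drinfeld twist of $E(n)$ produces a non-isomorphic Hopf algebra --- is a genuine subtlety, but the paper's own proof passes over it in silence, simply asserting the identification with surjections onto $E(n)$; your explicit acknowledgment of this point, even though your sketched justification is not a complete argument, is if anything more careful than the published proof.
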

\begin{proof}
By virtue of Proposition \ref{subcategories of Rep(H)}, we
identify $\mathbb{L} (\C_n)$ with the set of equivalence classes
of surjective Hopf algebra maps $D(E(n)) \to E(n)$ under the
equivalence relation given by composition with automorphisms of
$E(n)$. Using the description of Lemma \ref{surjective maps
D(E(n)) --> E(n)} this equivalence relation corresponds to the
equivalence relation on $M_{n \times 2n} (k)$ given by left
multiplication with invertible $n$-by-$n$ matrices.
The quotient set associated to the latter is $\Gr (n, 2n)$.
Indeed, if $A = (a_{ij})$ and $B = (b_{ij})$ are two $n \times 2n$
matrices of rank $n$ then the rows of $A$, $r_{1} (A), \dots,
r_{n}(A)$, and the rows of $B$, $r_{1} (B), \dots, r_{n} (B)$,
generate the same subspace of $k^{2n}$ if and only if there exists
$T = (t_{ij}) \in \GL_{n} (k)$ such that $r_{i} (B) = \sum_{j}
t_{ij} r_{j} (A) = r_{i} (TA)$, for all $i = 1, \dots, n$, that
is, if and only if $B = TA$.
\end{proof}

\subsection{$\mathbb{L}_0(\C_n)$ as the Lagrangian Grassmannian of a symplectic form}

\begin{lemma}
If $f : D (E(n)) \to E(n)$ is given by $(\ref{surjective maps})$
and $P = \{i_{1}, \dots, i_{r}\}$ is a subset of $\{1, \dots, n\}$
such that $i_{1} < i_{2} <  \cdots < i_{r}$, then
\begin{equation} \label{f(X) and f(x)}
f (X_{P}) = \sum_{|F| = |P|} [A]_{F, P} x_{F} \qquad and \qquad
f(x_{P}) = \sum_{|F| = |P|} [B]_{F, P} x_{F}
\end{equation}
\end{lemma}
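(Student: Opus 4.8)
The plan is to use only that $f$ is an algebra homomorphism together with the relations $x_j x_l = - x_l x_j$ and $x_j^2 = 0$ in $E(n)$. Since $X_P = X_{i_1} X_{i_2} \cdots X_{i_r}$ by definition, multiplicativity of $f$ and the formula $f(X_i) = \sum_{j} a_{ji} x_j$ from $(\ref{surjective maps})$ give $f(X_P) = \prod_{k=1}^{r} f(X_{i_k}) = \prod_{k=1}^{r} \bigl( \sum_{j_k=1}^{n} a_{j_k i_k} x_{j_k} \bigr)$. Expanding this product multilinearly, the first step is to write $f(X_P) = \sum_{j_1, \dots, j_r} a_{j_1 i_1} \cdots a_{j_r i_r}\, x_{j_1} \cdots x_{j_r}$, reducing the problem to simplifying the monomials $x_{j_1} \cdots x_{j_r}$.

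Next I would collapse this sum using the two relations. Since $x_j^2 = 0$, only tuples $(j_1, \dots, j_r)$ with pairwise distinct entries survive; such a tuple is the same datum as a subset $F = \{f_1 < \cdots < f_r\}$ together with a bijection onto it. Using $x_j x_l = - x_l x_j$ to sort the factors into increasing order yields $x_{j_1} \cdots x_{j_r} = \sign(\tau)\, x_F$, where $\tau \in S_r$ is the permutation determined by $j_k = f_{\tau(k)}$. Grouping the surviving terms by their underlying set $F$, the coefficient of $x_F$ becomes $\sum_{\tau \in S_r} \sign(\tau)\, a_{f_{\tau(1)} i_1} \cdots a_{f_{\tau(r)} i_r}$.

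Finally I would recognize this coefficient as the Leibniz expansion of $\det (a_{f_k i_l})_{k,l=1}^{r}$, which is precisely the minor $[A]_{F,P}$ of $A$ taken on rows $F$ and columns $P$ in the convention fixed in Section~\ref{En section}. This establishes $f(X_P) = \sum_{|F| = |P|} [A]_{F,P}\, x_F$, and running the identical computation with $f(x_i) = \sum_{j} b_{ji} x_j$ in place of $f(X_i)$ produces the companion identity $f(x_P) = \sum_{|F| = |P|} [B]_{F,P}\, x_F$.

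The only real obstacle is the bookkeeping of signs: one must verify that the sign accumulated when reordering the anticommuting generators $x_{j_1}, \dots, x_{j_r}$ into $x_F$ agrees exactly with $\sign(\tau)$ in the determinant expansion, and that the row/column roles of $F$ and $P$ match the definition of $[A]_{F,P}$. Everything else is formal. One could alternatively argue by induction on $r = |P|$, peeling off the factor $f(X_{i_r})$ and invoking Laplace expansion along the last column, but the direct multilinear expansion above is cleaner and makes the determinant appear transparently.
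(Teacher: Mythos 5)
Your proposal is correct and follows essentially the same route as the paper's proof: expand $f(X_P)=f(X_{i_1})\cdots f(X_{i_r})$ multilinearly, use $x_j^2=0$ and anticommutativity to sort each surviving monomial into increasing order, and recognize the resulting signed sum over permutations as the Leibniz expansion of the minor $[A]_{F,P}$ (and likewise $[B]_{F,P}$ for $f(x_P)$). The only cosmetic difference is bookkeeping: the paper sums over increasing tuples $j_1<\cdots<j_r$ paired with $\sigma\in S_r$, while you index by the underlying set $F$ together with a bijection, which is the same decomposition.
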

\begin{proof}
We have
\begin{align*}
f(X_{P}) & = f(X_{i_{1}}) \cdots f(X_{i_{r}}) \\
& = \sum_{j_{1}, \dots, j_{r}} a_{j_{1}, i_{1}} \cdots a_{j_{r},
i_{r}} x_{j_{1}} \cdots x_{j_{r}} \\
& = \sum_{\substack{j_{1} < \cdots < j_{r}\\ \sigma \in S_{r}}}
a_{\sigma (j_{1}), i_{1}} \cdots a_{\sigma(j_{r}),
i_{r}} x_{\sigma(j_{1})} \cdots x_{\sigma (j_{r})}\\
& = \sum_{j_{1} < \cdots < j_{r}} \left( \sum_{\sigma \in S_{r}}
\textnormal{sgn} (\sigma) a_{\sigma (j_{1}), i_{1}} \cdots
a_{\sigma(j_{r}), i_{r}} \right) x_{j_{1}}
\cdots x_{j_{r}}\\
& = \sum_{|F| = |P|} [A]_{F, P} x_{F}
\end{align*}
and similarly for $f(x_{P})$.
\end{proof}

Let  $\omega:  k^{2n} \times k^{2n} \to k $ be the symplectic
bilinear form:
\begin{equation}
\omega(\mathbf{a},\, \mathbf{b}) =  \sum_{i = 1}^{n} ( a_{i} b_{n
+ i} - a_{n + i} b_{i}), \qquad \mathbf{a} = (a_{1}, \dots,
a_{2n}),\,\mathbf{b} = (b_{1}, \dots, b_{2n}),
\end{equation}
and let
\begin{eqnarray}
Sp_{2n}(k) &=& \{T\in GL_{2n}(k) \mid \omega(T(\mathbf{a}),\,
T(\mathbf{b}))  = \omega(\mathbf{a},\, \mathbf{b})   \}, \\
PSp_{2n}(k) &=& Sp_{2n}(k) / \{ \pm I_{2n}\}.
\end{eqnarray}
be the symplectic group  and the projective symplectic group, respectively.
Recall that a subspace $V$ of $k^{2n}$ is called {\em isotropic} if $\omega
(\mathbf{a}, \mathbf{b}) = 0$, for all $\mathbf{a}$, $\mathbf{b}
\in V$. An isotropic subspace $V$ is called {\em Lagrangian} if $\dim_k(V)=n$
(which is the maximal possible dimension of an isotropic subspace).

Recall that for a braided tensor category $\C$ we denote by
$\mathbb{L}_{0} (\C)$ the set of tensor subcategories of $\Z(\C)$
braided equivalent to $\C$.

\begin{proposition}
\label{Lagrangian Grassmannian}
$\mathbb{L}_{0} (\C_{n}) = \textnormal{Lag}(n, 2n)$, the
Grassmanian of Lagrangian subspaces of the symplectic space
$(k^{2n},\,\omega)$.
\end{proposition}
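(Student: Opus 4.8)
The plan is to combine Proposition~\ref{L(Cn)}, which already identifies $\mathbb{L}(\C_n)$ with $\Gr(n,2n)$, with the observation that $\mathbb{L}_0(\C_n)$ is the subset of that Grassmannian consisting of subcategories that are moreover \emph{Lagrangian} as tensor subcategories of $\Z(\C_n)$, i.e.\ equal to their own centralizer. So the task reduces to a purely linear-algebraic computation: translate the condition ``$\L'=\L$'' on the subcategory corresponding to a rank-$n$ matrix $(A|B)$ into a condition on the row span of $(A|B)$ in $k^{2n}$, and show it is precisely the condition of being isotropic for $\omega$. Since an $n$-dimensional isotropic subspace of a $2n$-dimensional symplectic space is automatically Lagrangian, this pins down $\mathbb{L}_0(\C_n) = \Lag(n,2n)$.

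First I would recall that by Proposition~\ref{subcategories of Rep(H)} and Lemma~\ref{surjective maps D(E(n)) --> E(n)}, a point of $\mathbb{L}(\C_n)$ is a surjection $f:D(E(n))\to E(n)$ up to automorphism, encoded by the row span $V_f\subset k^{2n}$ of the matrix $(A|B)$; the embedding $\C_n\hookrightarrow\Z(\C_n)$ it determines sends the generators via \eqref{surjective maps}. Next I would compute the braiding of $\Z(\C_n)$ on the one-dimensional Ext-data attached to such a subcategory: concretely, the subcategory $\iota_f(\C_n)$ centralizes the distinguished subcategory $\iota_c(\C_n)$ exactly when the associated quasi-triangular structure (the $R$-matrix $\mathcal{R}$ of $D(E(n))$ restricted through $f$) satisfies the symmetry condition $c_{YX}\circ c_{XY}=\id$. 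Using the canonical $\mathcal{R}=\sum_i(1\ot e_i)\ot(f_i\ot 1)$ and the pairing between $E(n)$ and $E(n)^{*\mathrm{cop}}$ worked out in Section~4, I would extract how the double braiding of two skew-primitives $f(X_i),f(x_i),\dots$ pairs up. The key point is that the commutator relation $x_i X_j + X_j x_i = \delta_{i,j}(1-Cc)$ from \eqref{DEn3} is exactly the source of the symplectic form: the pairing of the ``$x$'' generators against the ``$X$'' generators is nondegenerate and antisymmetric, while $x_i$ with $x_j$ and $X_i$ with $X_j$ pair trivially. Feeding the coefficients $a_{ij},b_{ij}$ into this pairing produces precisely $\omega$ evaluated on the rows of $(A|B)$.

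Thus the self-centralizing condition $\L'=\L$ for $\L=\iota_f(\C_n)$ becomes $\omega(\mathbf{r},\mathbf{r}')=0$ for all rows $\mathbf{r},\mathbf{r}'$ of $(A|B)$, i.e.\ $V_f$ is isotropic; since $\dim_k V_f=n=\tfrac{1}{2}\dim_k k^{2n}$, the subspace is Lagrangian, and conversely every Lagrangian subspace arises this way. This identifies $\mathbb{L}_0(\C_n)$ with $\Lag(n,2n)$ inside $\Gr(n,2n)$. I also need to check that for these particular $\L$ the tensor equivalence $\L\cong\C_n$ is in fact a \emph{braided} equivalence (not merely tensor), as required by the definition of $\mathbb{L}_0$; this should follow because any symmetric braiding on $\C_n$ is equivalent to the standard one (the triangular structures $R_A$ being classified by symmetric matrices, per Section~4), so the braided-equivalence condition imposes no constraint beyond Lagrangianness.

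The main obstacle I expect is the explicit computation of the double braiding in the third step: one must carefully evaluate the $R$-matrix of $D(E(n))$ on the pulled-back generators and verify that the resulting bilinear pairing is \emph{exactly} $\omega$ up to the normalization built into $\iota_f$, keeping track of the signs coming from the super/anticommutation relations and the factors $(-1)^{S(F,P)}$ in $\Delta(x_P)$. This is where the skew-symmetry and the precise coefficient matching $a_ib_{n+i}-a_{n+i}b_i$ must be pinned down, rather than merely producing \emph{some} antisymmetric nondegenerate form. Everything else — the Grassmannian identification, the dimension count forcing isotropic-of-half-dimension to be Lagrangian, and the reduction of braided to tensor equivalence — is structural and follows from results already established in the excerpt.
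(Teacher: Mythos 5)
Your overall route coincides with the paper's: via Proposition~\ref{subcategories of Rep(H)}, Lemma~\ref{surjective maps D(E(n)) --> E(n)} and Proposition~\ref{L(Cn)}, points of $\mathbb{L}(\C_n)$ are row spans of rank-$n$ matrices $(A|B)$, and one must show that the corresponding subcategory $\L_f$ lies in $\mathbb{L}_0(\C_n)$ exactly when that row span is $\omega$-isotropic. The genuine gap is in how you verify this. Evaluating the double braiding on the skew-primitive data (equivalently, on the two-dimensional pulled-back modules --- in essence the computation the paper performs later, in Proposition~\ref{squared braiding}) proves only the \emph{necessity} of isotropy: if $\L_f$ is symmetric, the rows pair to zero. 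For the converse you must show that vanishing of this degree-one pairing forces the \emph{entire} pushed-forward $R$-matrix $(f\ot f)(\mathcal{R})$ to be triangular, i.e.\ you must control all higher terms $c^ix_P\ot c^jx_Q$ with $|P|=|Q|\ge 2$; triviality of the double braiding on a family of two-dimensional objects yields triviality on all of $\L_f$ only if those objects tensor-generate $\L_f$ (as a subcategory closed under subquotients), which you neither state nor prove. The paper closes exactly this hole by computing $(f\ot f)(\mathcal{R})$ in full: using \eqref{f(X) and f(x)}, \eqref{change of base} and the minor identity $[AB]_{E,F}=\sum_{|P|=|E|}[A]_{E,P}[B]_{P,F}$ it obtains $(f\ot f)(\mathcal{R})=R_{AB^{t}}$ on the nose, then cites \cite{CC} that $R_{A'}$ is triangular iff $A'$ is symmetric, and symmetry of $AB^{t}$ is isotropy of the rows. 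A clean repair of your plan is to quote the classification of \cite{PvO1} recalled in Section~4: $(f\ot f)(\mathcal{R})$ is a quasi-triangular structure on $E(n)$, hence equals $R_{A'}$ for some $A'\in M_n(k)$, and $A'$ is read off from the coefficients of the $x_i\ot x_j$ terms; with that input your generator-level computation does pin down $A'=AB^{t}$, and you can finish as the paper does.

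Two further inaccuracies. Your claim that $\iota_f(\C_n)$ ``centralizes the distinguished subcategory $\iota_c(\C_n)$'' exactly when $(f\ot f)(\mathcal{R})$ is symmetric is wrong: that symmetry condition says $\iota_f(\C_n)$ centralizes \emph{itself}; centralizing $\iota_c(\C_n)$ is a different, mixed condition (for two Lagrangian subspaces it would force $U=U_0$, a single point of the Grassmannian). Second, you phrase membership in $\mathbb{L}_0(\C_n)$ as the equality $\L'=\L$, whereas the paper's proof uses the description recalled just before the proposition (tensor subcategories braided equivalent to $\C_n$), for which only symmetry of the inherited braiding matters, together with the fact that every triangular structure $R_{A'}$ on $E(n)$ yields a category braided equivalent to $\C_n$ (a point both you and the paper assert rather than prove). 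If you insist on literal equality $\L'=\L$, the inclusion $\L'\subset\L$ does not follow from your linear-algebra remark that an $n$-dimensional isotropic subspace is Lagrangian; it would require a centralizer-dimension argument in the non-semisimple setting, or a direct computation of $\L'$, neither of which you supply.
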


\begin{proof}
Under the correspondence of Proposition \ref{subcategories of
Rep(H)}, $\mathbb{L}_{0} (\C_{n})$ is identified with the set of
equivalence classes of surjective Hopf algebra maps $D(E(n)) \to
E(n)$ that take the canonical quasi-triangular structure of
$D(E(n))$ to a triangular structure.

Let $A$, $B \in M_{n}(k)$ be such that the two block matrix $M =
(A | B)$ has rank $n$ and let $f : D(E(n)) \to E(n)$ be the map
given by (\ref{surjective maps}). Let $R = \sum_{i, P} c^{i}x_{P}
\ot (c^{i}x_{P})^{*}$ be the canonical $R$-matrix of $D (E(n))$.
Then, taking into account (\ref{f(X) and f(x)}) and using
(\ref{change of base}), we have:
\begin{align*}
(f \ot f) (R) & = \frac{1}{2} \sum_{i, P} (-1)^{\frac{|P| (|P|
-1)}{2} + i |P|} f (c^{i}x_{P}) \ot f (X_{P} + (-1)^{i} CX_{P}) \\
& = \frac{1}{2} \sum_{i, |E| = |F| = |P|} (-1)^{\frac{|P| (|P|
-1)}{2} + i |P|} [A]_{E, P} [B]_{F, P} c^{i}x_{E} \ot (x_{F} +
(-1)^{i} cx_{F})\\
& = \frac{1}{2} \sum_{|E| = |F| = |P|} (-1)^{\frac{|P| (|P|
-1)}{2}} [A]_{E, P} [B]_{F, P} \Big( x_{E} \ot x_{F} + x_{E} \ot
cx_{F} + \\
& \quad + (-1)^{|P|} cx_{E} \ot x_{F} + (-1)^{|P| + 1} cx_{E} \ot
cx_{F} \Big)\\
& = \frac{1}{2} \sum_{j = 0}^{n} (-1)^{\frac{j (j - 1)}{2}}
\sum_{|E| = |F| = |P| = j} [A]_{E, P} [B]_{F, P} \Big( x_{E} \ot
x_{F}
+ x_{E} \ot cx_{F} + \\
& \quad + (-1)^{j} cx_{E} \ot x_{F} + (-1)^{j + 1} cx_{E} \ot
cx_{F} \Big)\\
& = \frac{1}{2} \sum_{j = 0}^{n} (-1)^{\frac{j (j - 1)}{2}}
\sum_{|E| = |F| = j} \left( \sum_{|P| = j} [A]_{E, P} [B]_{F, P}
\right) \Big(x_{E} \ot x_{F} +  \\
& \quad + x_{E} \ot cx_{F} + (-1)^{j} cx_{E} \ot x_{F} + (-1)^{j +
1} cx_{E} \ot cx_{F} \Big)\\
& = \frac{1}{2} \sum_{j = 0}^{n} (-1)^{\frac{j (j - 1)}{2}}
\sum_{|E| = |F| = j} [AB^{t}]_{E, F} \Big(x_{E} \ot x_{F} + x_{E}
\ot cx_{F} +  \\
& \quad +  (-1)^{j} cx_{E} \ot x_{F} + (-1)^{j +
1} cx_{E} \ot cx_{F} \Big)\\
& = R_{AB^{t}}
\end{align*}
where $B^{t}$ denotes the transpose matrix of $B$ and where we
used the well known formula for the minor of a product of two
matrices, $[AB]_{E, F} = \sum_{|P| = |E|} [A]_{E, P} [B]_{P, F}$.
Thus, $f$ takes the canonical $R$-matrix of $D(E(n))$ to the
$R$-matrix corresponding to $AB^{t}$. Recall that the latter is a
triangular structure if and only if $AB^{t}$ is symmetric. This is
equivalent to $AB^{t} = B A^{t}$, or, what is the same, to
$\sum_{l = 1}^{n} a_{il} b_{jl} = \sum_{l = 1}^{n} b_{il} a_{jl}$,
for all $i$, $j = 1, \dots, n$. Subtracting the right hand term in
the previous equality from the other, we obtain $\sum_{l = 1}^{n}
(a_{il} b_{jl} - b_{il} a_{jl}) = 0$, for all $i$, $j = 1, \dots,
n$. In terms of the matrix $M$, if we denote its rows by $r_{1}
(M), \dots, r_{n} (M)$, this condition is equivalent to saying
that $\omega (r_{i}(M), r_{j}(M)) = 0$, for all $i$, $j = 1,
\dots, n$.

We conclude therefore that the surjective Hopf algebra maps
$D(E(n)) \to E(n)$ which take the canonical quasi-triangular
structure of $D(E(n))$ to a triangular structure of $E(n)$
correspond to $n \times 2n$ matrices, of rank $n$, with entries
from $k$, such that the symplectic form $\omega$ on $k^{2n}$
vanishes on the subspace generated by their rows. Equivalence
classes of such maps have, as their correspondent in $\Gr(n, 2n)$,
those subspaces on which the symplectic form vanishes, whence the
assertion in the statement.
\end{proof}

\section{Computation of $\Aut^{br}(\Z(\C_n))$}


\subsection{$\Ext_{\Z(\C_n)}^{1}(\chi, \varepsilon)$ as a symplectic space}

Recall that $\Z(\C_n)=\Rep(D(E(n))$ has precisely two invertible objects: the trivial representation
$\varepsilon$ and the one-dimensional representation $\chi$ from  Lemma~\ref{chi}.

\begin{proposition} \label{the ext space}
The space $\Ext_{\Z(\C_n)}^{1}(\chi, \varepsilon)$ of equivalence
classes of extensions of the one-dimensional representation $\chi$
by the trivial representation $\varepsilon$ is isomorphic to
$k^{2n}$. The equivalence class corresponding to $\mathbf{a} =
(a_{1}, \dots, a_{2n}) \in k^{2n}$ is the one associated to the
extension
$$
0 \to \varepsilon \xrightarrow{i} V_{\mathbf{a}}
\xrightarrow{p} \chi \to 0
$$
where $V_{\mathbf{a}} = k^{2}$ is the $2$-dimensional $D
(E(n))$-module with basis $\{v_{1} = (1, 0), v_{2} = (0, 1)\}$, $D
(E(n))$-action given in matrix form by
\begin{equation} \label{D(E(n))-module structure}
C, c \mapsto \left( \begin{array}{cc} 1 & 0 \\ 0 & -1\end{array}
\right), \,\,  X_{i} \mapsto \left( \begin{array}{cc} 0 & a_{i} \\
0 & 0 \end{array} \right), \,\, x_{i} \mapsto \left(
\begin{array}{cc} 0 & a_{n + i} \\ 0 & 0\end{array} \right), \,\, i
= 1, \dots, n
\end{equation}
and the maps $i$ and $p$ are such that $i(1) = v_{1}$ and
$p(v_{2}) = 1$.
\end{proposition}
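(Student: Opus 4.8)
The plan is to reduce the computation of $\Ext_{\Z(\C_n)}^{1}(\chi, \varepsilon)$ to the determination of a space of skew-primitive functionals on $D(E(n))$ via Remark~\ref{skew-prims}, and then to exhibit the modules $V_{\mathbf{a}}$ as the extensions realizing every such class. First I would verify that formula \eqref{D(E(n))-module structure} does define a $D(E(n))$-module for each $\mathbf{a}\in k^{2n}$. Since the operators assigned to $c$ and $C$ are both $\mathrm{diag}(1,-1)$ and those assigned to $x_i, X_i$ are strictly upper-triangular $2\times 2$ matrices, the products of any two degree-one operators vanish and each degree-one operator anticommutes with $\mathrm{diag}(1,-1)$; moreover $1-Cc\mapsto I-I=0$. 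Checking the defining relations \eqref{DEn1}--\eqref{DEn3} is then immediate. The submodule $kv_1$ is isomorphic to $\varepsilon$ and the quotient to $\chi$, so $i(1)=v_1$, $p(v_2)=1$ produce a genuine extension $0\to\varepsilon\to V_{\mathbf a}\to\chi\to 0$, giving a map $k^{2n}\to \Ext^1_{\Z(\C_n)}(\chi,\varepsilon)$.

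Next I would invoke Remark~\ref{skew-prims} with $H=D(E(n))$, $\gamma=\varepsilon$, $\eta=\chi$, which gives a $k$-linear isomorphism
\[
\Ext^1_{\Rep(D(E(n)))}(\chi,\varepsilon)\;\cong\;\textnormal{P}_{\varepsilon,\chi}(D(E(n))^{*})\,/\,k(\varepsilon-\chi),
\]
under which an extension $E$ corresponds to the class of the functional $\xi$ determined by $h\cdot e_2=\xi(h)e_1+\chi(h)e_2$. Reading off this $\xi$ from \eqref{D(E(n))-module structure} with $e_1=v_1$, $e_2=v_2$ shows that $V_{\mathbf a}$ corresponds to $\xi_{\mathbf a}$ with $\xi_{\mathbf a}(X_i)=a_i$, $\xi_{\mathbf a}(x_i)=a_{n+i}$ and $\xi_{\mathbf a}(c)=\xi_{\mathbf a}(C)=0$. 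In particular each $\xi_{\mathbf a}$ is $(\varepsilon,\chi)$-primitive, so the assignment $\mathbf a\mapsto \xi_{\mathbf a}$ is an injective linear map of $k^{2n}$ into $\textnormal{P}_{\varepsilon,\chi}(D(E(n))^{*})$ landing in the subspace where $\xi(c)=\xi(C)=0$; this subspace meets $k(\varepsilon-\chi)$ only in $0$ because $(\varepsilon-\chi)(c)=2$, so $\mathbf a\mapsto\overline{\xi_{\mathbf a}}$ remains injective modulo $k(\varepsilon-\chi)$.

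It then remains to prove surjectivity, i.e.\ that $\textnormal{P}_{\varepsilon,\chi}(D(E(n))^{*})$ is spanned by $\varepsilon-\chi$ together with the $\xi_{\mathbf a}$; this dimension count is the crux of the argument. Working in the basis $\{C^{j}X_{P}c^{l}x_{Q}\}$, I would use the defining identity $\xi(hl)=\varepsilon(h)\xi(l)+\xi(h)\chi(l)$ together with $\varepsilon(x_i)=\varepsilon(X_i)=\chi(x_i)=\chi(X_i)=0$. Any basis element of total degree $|P|+|Q|\ge 2$ factors as a product of two elements each containing an $x$ or an $X$; the first is killed by $\varepsilon$ and the second by $\chi$, forcing $\xi$ to vanish on it. On the grouplike part $\{1,c,C,Cc\}$ one gets $\xi(1)=0$, while applying $\xi$ to $cC=Cc$ forces $\xi(c)=\xi(C)$ and hence $\xi(Cc)=0$. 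Finally, on the degree-one elements $\xi$ is completely determined by the $2n$ free values $\xi(X_i),\xi(x_i)$, and all remaining relations of \eqref{DEn1}--\eqref{DEn3} are automatically consistent (the anticommutations with $c,C$ and the relation $x_iX_j+X_jx_i=\delta_{ij}(1-Cc)$ reduce to $0=0$ once $\xi(c)=\xi(C)$). Thus $\textnormal{P}_{\varepsilon,\chi}(D(E(n))^{*})$ has dimension $2n+1$, equal to $\dim k(\varepsilon-\chi)+2n$; combined with the injectivity above this shows $\mathbf a\mapsto\overline{\xi_{\mathbf a}}$ is an isomorphism, and composing with Remark~\ref{skew-prims} yields $\Ext^1_{\Z(\C_n)}(\chi,\varepsilon)\cong k^{2n}$ realized by the $V_{\mathbf a}$. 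The main obstacle is organizing this primitive-space computation cleanly---in particular the vanishing on higher-degree basis elements and the verification that no defining relation imposes a constraint beyond $\xi(c)=\xi(C)$.
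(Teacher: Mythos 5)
Your proposal is correct and takes essentially the same approach as the paper: both identify an extension with the skew-primitive functional $\xi$ read off from the action on a lifted basis vector $v_2$, exactly in the spirit of Remark~\ref{skew-prims}. The only difference is one of completeness rather than of method---the paper reads off $f$ from an arbitrary extension, checks it is well defined modulo $k(\chi-\varepsilon)$, and declares the resulting map to $k^{2n}$ ``easily seen to be one-to-one and onto,'' whereas you supply that missing verification by explicitly computing $\textnormal{P}_{\varepsilon,\chi}\bigl(D(E(n))^{*}\bigr)$ (vanishing in degree $\geq 2$, the constraint $\xi(c)=\xi(C)$, dimension $2n+1$), which is precisely the justification the paper elides.
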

\begin{proof}
Let $V$ be an extension of $\chi$ by $\varepsilon$. Then
$V$ comes equipped with two maps $i$ and $p$ such that
$$
0 \to \varepsilon \xrightarrow{i} V \xrightarrow{p} \chi
\to 0
$$
is an exact sequence. Let $v_{1} = i(1)$ and choose $v_{2} \in V$
such that $p(v_{2}) = 1$. Then $\{v_{1}, v_{2}\}$ is a $k$-basis
of $V$ on which the elements of $D(E(n))$ act by $h \cdot v_{1} =
\varepsilon(h) v_{1}$ and $h \cdot v_{2} = f(h) v_{1} + \chi(h)
v_{2}$, for all $h \in D(E(n))$, and for some linear map $f \in
\Hom (D(E(n)), k)$. Consider now another extension
$$
0 \to \varepsilon \xrightarrow{i'} V' \xrightarrow{p'}  \chi \to 0
$$
of $\chi$ by $\varepsilon$ and associate to $V'$, as above, a
basis $\{v'_{1}, v'_{2}\}$ and a linear map $f' \in \Hom (D(E(n)),
k)$. We claim that if there exists a homomorphism of extensions
$\varphi : V \to V'$ then $f$ and $f'$ differ by a multiple of
$\chi - \varepsilon$. Indeed, if $\varphi$ is such a map then,
from $\varphi \circ i = i'$ and $p' \circ \varphi = p$, we readily
deduce that $\varphi (v_{1}) = v'_{1}$ and $\varphi (v_{2}) =
\lambda v'_{1} + v'_{2}$, for some $\lambda \in k$. Letting $h \in
D(E(n))$ act on the latter relation and taking into account that
$\varphi$ commutes with the action of $D(E(n))$, we arrive at the
equality $\big( f(h) + \lambda \chi(h) \big) v'_{1} + \chi(h)
v'_{2} = \big( \lambda \varepsilon(h) + f'(h) \big) v'_{1} +
\chi(h) v'_{2}$, which shows that $f' - f = \lambda (\chi -
\varepsilon)$. In particular, if we take $V' = V$ and $\varphi =
\id_{V}$, we see that the $2n$-tuple $(f(X_{1}), \dots, f(X_{n}),
f(x_{1}), \dots f(x_{n}))$ does not depend on the choice of
$v_{2}$. Also, the above discussion shows that the same $2n$-tuple
depends only on the equivalence class of $V$. We can, thus, define
a map $\Ext_{\Z(\C_n)}^{1} (\chi, \varepsilon) \to k^{2n}$ sending
the equivalence class of $V$ to $(f(X_{1}), \dots, f(X_{n}),
f(x_{1}), \dots f(x_{n}))$. This map is easily seen to be
one-to-one and onto, sending the equivalence class of the
extension $V_{\mathbf{a}}$, in the statement, to $\mathbf{a} \in
k^{2n}$.
\end{proof}

\begin{lemma}
\label{V_a as a Yetter-Drinfeld module}
Let
\[
0 \to \varepsilon \xrightarrow{i} V_{\mathbf{a}} \xrightarrow{p} \chi \to 0
\]
be the extension of $\chi$ by $\varepsilon$ associated to the $2n$-tuple $\mathbf{a} = (a_{1},
\dots, a_{2n}) \in k^{2n}$ and let $\{v_{1}, v_{2}\}$ be a basis of
$V_{\mathbf{a}}$ such that the action of $D(E(n))$ on
$V_{\mathbf{a}}$ is given by $(\ref{D(E(n))-module structure})$.
Then, as a Yetter-Drinfeld module over $E(n)$, $V_{\mathbf{a}}$
has the action  given by the restriction of action of $D(E(n))$ to the copy of
$E(n)$ generated by $c$ and $\{x_{i}\}$, and coaction given by
$$
\rho(v_{1}) = v_{1} \ot 1 \quad and \quad \rho(v_{2}) = \sum_{j =
1}^{n} a_{j} v_{1} \ot x_{j} + v_{2} \ot c.
$$
\end{lemma}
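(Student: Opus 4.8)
The plan is to transport the $D(E(n))$-module structure on $V_{\mathbf{a}}$, already exhibited in Proposition~\ref{the ext space}, across the standard equivalence $\Z(\C_n) = \Rep(D(E(n))) \cong {}_{E(n)}\mathcal{YD}^{E(n)}$ of Example~\ref{YD as center}. Under this equivalence the left $E(n)$-module structure is simply the restriction of the $D(E(n))$-action along the Hopf subalgebra generated by $c$ and the $x_i$, which is exactly the action in the statement and needs no computation. Thus the whole content is to identify the right $E(n)$-coaction $\delta$, which is governed by the action of the complementary copy $E(n)^{*\text{cop}} \subset D(E(n))$ generated by $C = 1^* - c^*$ and $X_i = x_i^* - (cx_i)^*$.

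Given a right comodule $\delta(z) = z\0 \ot z\1$, the associated action of $p \in E(n)^*$ is $p \cdot z = \langle p,\, z\1\rangle\, z\0$, and this defines an algebra homomorphism $E(n)^{*\text{cop}} \to \End(V_{\mathbf{a}})$; hence it is enough to check that the $\delta$ in the statement reproduces the action of the algebra generators $C$ and $X_i$. I would substitute $\delta(v_1) = v_1 \ot 1$ and $\delta(v_2) = \sum_j a_j\, v_1 \ot x_j + v_2 \ot c$ and evaluate, using $\langle C,\, 1\rangle = 1$, $\langle C,\, c\rangle = -1$, $\langle C,\, x_j\rangle = 0$ and $\langle X_i,\, 1\rangle = 0$, $\langle X_i,\, c\rangle = 0$, $\langle X_i,\, x_j\rangle = \delta_{i,j}$. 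This yields $C \cdot v_1 = v_1$, $C \cdot v_2 = -v_2$, $X_i \cdot v_1 = 0$ and $X_i \cdot v_2 = a_i v_1$, in agreement with the matrices~\eqref{D(E(n))-module structure}; since the $E(n)$-action already agrees, this identifies $V_{\mathbf{a}}$ with the asserted Yetter-Drinfeld module.

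As an independent check one can compute $\delta$ from the other direction, via $\delta(z) = \sum_{i, P} \big( (c^i x_P)^* \cdot z \big) \ot c^i x_P$ with $(c^i x_P)^*$ expressed through $C$ and $X_P$ by~\eqref{change of base}; here the matrices in~\eqref{D(E(n))-module structure} are strictly upper triangular, so all products of two or more of the $X_i, x_i$ vanish on $V_{\mathbf{a}}$, only the terms with $|P| \le 1$ survive, and $CX_j$ acts as $X_j$, so the surviving terms collapse to the stated $\delta$. The one point demanding care is fixing the convention in the comodule--module dictionary, namely which leg of $\delta$ is paired with $p$ and the absence of any antipode twist in $p \cdot z = \langle p,\, z\1\rangle z\0$, together with the remark that although $C$ and $X_i$ lie in $E(n)^{*\text{cop}}$ the co-opposite twist leaves their pairing with $E(n)$ unchanged, so the evaluations above are taken in $E(n)^*$; this convention is pinned down precisely by demanding consistency with~\eqref{D(E(n))-module structure}, after which only the short computation above remains.
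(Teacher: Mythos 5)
Your proposal is correct, and it in fact contains two arguments: your ``independent check'' \emph{is} the paper's own proof, which computes the coaction directly from the dual-basis formula $\rho(v)=\sum_{i,P}\big((c^{i}x_{P})^{*}\cdot v\big)\ot c^{i}x_{P}$, using \eqref{change of base} together with the vanishing of $(C^{i}X_{P})\cdot v_{k}$ for $|P|\geq 2$ and the fact that $CX_{j}$ and $X_{j}$ act identically on $V_{\mathbf{a}}$. Your primary argument runs the dictionary in the opposite direction: rather than deriving $\rho$ from the $D(E(n))$-action, you posit the stated $\rho$, check via $p\cdot z=\langle p,\,z_{(1)}\rangle z_{(0)}$ that it reproduces the action of the algebra generators $C$ and $X_{i}$, and conclude by injectivity of the correspondence between right $E(n)$-comodule structures and left $E(n)^{*}$-module structures. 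This is sound and cheaper --- six pairings instead of the action of every monomial $C^{i}X_{P}$ plus the change-of-base formula --- but it carries one dependency you should state explicitly: for agreement on the generators to force agreement of the two $E(n)^{*}$-actions on all of $E(n)^{*}$, the action induced by your proposed $\rho$ must itself be an algebra homomorphism, i.e.\ you must check that the proposed $\rho$ is a coassociative, counital coaction. That check is immediate (it amounts precisely to $c$ being grouplike and each $x_{j}$ being $(1,c)$-primitive in $E(n)$), but without it ``agreement on generators'' yields nothing. Finally, your worry about conventions is resolved less circularly than by ``consistency with \eqref{D(E(n))-module structure}'': the paper's implicit dictionary is $\rho(v)=\sum_{i}(e_{i}^{*}\cdot v)\ot e_{i}$ over a basis $\{e_{i}\}$ of $E(n)$, which is exactly the inverse of the pairing formula you use, so the two conventions agree by construction. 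In exchange for your economy, the paper's computation is constructive (it derives the formula rather than verifying a formula guessed in advance), and it needs no separate coassociativity remark since the dual-basis expansion of a genuine $D(E(n))$-module is automatically a coaction.
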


\begin{proof}
We only need to verify the formulas for the coaction. This is
given by
$$
\rho (v) = \sum_{i, P} (c^{i}x_{P})^* \cdot v \ot c^{i}x_{P}
$$
for all $v \in V_{\mathbf{a}}$, so, taking into account
(\ref{change of base}) and the following easy to check relations
$$
(C^{i}X_{P}) \cdot v_{1} = \left\{\begin{array}{lll}
v_{1} & \textnormal{if} & P = \emptyset \\
0     & \textnormal{if} & P \neq \emptyset \\
\end{array}\right.
\quad \textnormal{and} \quad (C^{i}X_{P}) \cdot v_{2} =
\left\{\begin{array}{lll}
(-1)^{i}v_{2} & \textnormal{if} & P = \emptyset \\
a_{j}v_{1}    & \textnormal{if} & P = \{j\} \\
0             & \textnormal{if} & |P| \geq 2
\end{array}\right.
$$
the verification is straightforward.
\end{proof}

\begin{lemma}
\label{Centralizing chi}
Let $V_{\mathbf{a}}$ be the underlying object of an extension of $\chi$ by $\varepsilon$. Then  $V_{\mathbf{a}}$
centralizes $\chi$, i.e., the squared braiding $c_{V_{\mathbf{a}},\chi} \circ c_{\chi,V_{\mathbf{a}}}$ is the identity
morphism.
\end{lemma}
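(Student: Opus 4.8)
The plan is to work entirely within the category ${}_{E(n)}\mathcal{YD}^{E(n)}$ of Yetter–Drinfeld modules, which by Example~\ref{YD as center} is identified with $\Z(\C_n)$ as a braided tensor category. Under this identification the braiding between two Yetter–Drinfeld modules $Z_1$ and $Z_2$ is read off from the central structure \eqref{central structure}: one has $c_{Z_1,Z_2}(z_1\ot z_2) = (z_2)_{(0)} \ot (z_2)_{(1)}\cdot z_1$. Thus, to compute the squared braiding on $\chi\ot V_{\mathbf{a}}$ I only need the action and coaction of the two modules involved, both of which are small: $V_{\mathbf{a}}$ is described in Lemma~\ref{V_a as a Yetter-Drinfeld module}, and I will first pin down the Yetter–Drinfeld structure of $\chi$.

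First I would determine the coaction of $\chi$. As an $E(n)$-module, $\chi$ is the line on which $c$ acts by $-1$ and each $x_i$ acts by $0$ (Lemma~\ref{chi}). For the coaction $\rho(s) = \sum_{i,P}(c^i x_P)^*\cdot s \ot c^i x_P$ I would substitute the expression \eqref{change of base} for $(c^i x_P)^*$ as an element of $D(E(n))$ and evaluate $\chi$ on it. Since $\chi$ is an algebra map killing every $X_i$, the factor $\chi(X_P)$ vanishes unless $P=\emptyset$, and a short calculation using $\chi(C)=-1$ leaves exactly the term $i=1$, $P=\emptyset$. Hence $\rho(s)=s\ot c$, i.e. $\chi$ coacts through the grouplike $c$.

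With both structures in hand I would simply evaluate the two braidings on the basis $\{s\ot v_1,\, s\ot v_2\}$ of $\chi\ot V_{\mathbf a}$. Applying $c_{\chi,V_{\mathbf a}}(s\ot v)=v_{(0)}\ot v_{(1)}\cdot s$ and using that $x_j$ annihilates $\chi$, the terms of $\rho(v_2)$ involving $x_j$ drop out, so $s\ot v_1\mapsto v_1\ot s$ and $s\ot v_2\mapsto -\,v_2\ot s$ (the sign coming from $c\cdot s=-s$). Applying next $c_{V_{\mathbf a},\chi}(v\ot s)=s_{(0)}\ot s_{(1)}\cdot v = s\ot c\cdot v$ and using $c\cdot v_1=v_1$, $c\cdot v_2=-v_2$, the two signs cancel in the $v_2$-component and the composite returns each basis vector to itself. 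This shows $c_{V_{\mathbf a},\chi}\circ c_{\chi,V_{\mathbf a}}=\id$.

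The computation is essentially mechanical once the structures are fixed; the only place demanding care is the determination of the coaction of $\chi$, where the conversion formula \eqref{change of base} and the signs $\chi(C)=\chi(c)=-1$ must be handled correctly. The conceptual reason behind the cancellation — and a useful sanity check — is that $\chi$ is killed by the odd generators $x_i$ and $X_i$, so only the grouplike part of each coaction is seen by the double braiding, and a grouplike element of order two acts by a scalar $\pm 1$ whose square is $1$.
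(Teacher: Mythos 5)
Your proof is correct and follows essentially the same route as the paper's: both work in ${}_{E(n)}\mathcal{YD}^{E(n)}$, use the braiding formula coming from \eqref{central structure}, and evaluate the double braiding on the basis $\{v_1,v_2\}$, with the two signs from $c$ cancelling on the $v_2$-component. The only difference is cosmetic: the paper simply asserts the Yetter--Drinfeld structure of $\chi$ (coaction $1\mapsto 1\ot c$), whereas you derive it from \eqref{change of base}, which is a harmless extra verification.
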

\begin{proof}
As a Yetter-Drinfeld module over $E(n)$, $\chi$ has the module
structure given by $c \cdot 1 = -1$, $x_{i} \cdot 1 = 0$, for all
$i = 1, \dots, n$, and the comodule structure $1 \mapsto 1 \ot c$.
Let $\{v_{1}, v_{2}\}$ be a basis of $V_{\mathbf{a}}$ such that
the action of $D(E(n))$ on $V_{\mathbf{a}}$ is given by
(\ref{D(E(n))-module structure}). Then, from Lemma \ref{V_a as a
Yetter-Drinfeld module} and (\ref{central structure}), we deduce
that
$$
c_{V_{\mathbf{a}},\chi} \circ c_{\chi,V_{\mathbf{a}}} (1 \ot
v_{1}) = c_{V_{\mathbf{a}},\chi} (v_{1} \ot 1) = 1 \ot c \cdot
v_{1} = 1 \ot v_{1}
$$
and
\begin{align*}
c_{V_{\mathbf{a}},\chi} \circ c_{\chi,V_{\mathbf{a}}} (1 \ot
v_{2}) & = c_{V_{\mathbf{a}},\chi} \left( \sum_{j = 1}^{n} a_{j}
v_{1} \ot x_{j} \cdot 1 + v_{2} \ot c \cdot 1 \right)\\
& = - c_{V_{\mathbf{a}},\chi} (v_{2} \ot 1) \\
& = - 1 \ot c \cdot v_{2}\\
& = 1 \ot v_{2}
\end{align*}
whence the claim.
\end{proof}

\begin{corollary}
\label{tau on Ext}
Let $\tau\in \Aut^{br}(\Z(\C_n))$ be the image of the non-trivial element of $\Pic(\sVec)\cong \mathbb{Z}/2\mathbb{Z}$
under the composition
\[
\Pic(\sVec) \xrightarrow{\Ind_{\sVec}^{\Z(\C_n)}} \Pic \big(
\Z(\C_n) \big) \cong  \Aut^{br}\big( \Z(\C_n) \big).
\]
Then $\tau(V_{\mathbf{a}})\cong V_{\mathbf{a}}$.
\end{corollary}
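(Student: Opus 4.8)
The plan is to realize $\tau$ as an instance of the map studied in Proposition~\ref{image of induced}, applied to the braided category $\C=\Z(\C_n)$ together with its tensor subcategory $\D=\sVec$, the latter regarded inside $\Z(\C_n)$ via the central embedding $\iota_c:\C_n\hookrightarrow\Z(\C_n)$ associated to the symmetric (triangular) structure $R_0$. The composition defining $\tau$ in the statement is precisely the composition $\Pic(\D)\xrightarrow{\Ind_\D^\C}\Pic(\C)\to\Aut^{br}(\C)$ appearing there, so Proposition~\ref{image of induced} tells us that $\tau$ lies in $\Aut^{br}(\Z(\C_n);\,\sVec')$; that is, $\tau$ restricts to a trivial autoequivalence of the centralizer $\sVec'$ of $\sVec$ in $\Z(\C_n)$. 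Hence it suffices to show that the object $V_{\mathbf{a}}$ belongs to $\sVec'$.

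First I would pin down the image of $\sVec$ inside $\Z(\C_n)$. As a subcategory of $\C_n$, $\sVec$ is generated by the one-dimensional sign representation $\delta$ of $E(n)$ (on which $c$ acts by $-1$ and each $x_i$ by $0$), with $\delta\ot\delta\cong\varepsilon$. Under $\iota_c$ the object $\delta$ acquires the $E(n)$-comodule structure $v\mapsto R_0^{21}(v\ot 1)$ of \eqref{v--> Rv}. Using $R_0=\frac{1}{2}(1\ot 1+1\ot c+c\ot 1-c\ot c)$ and $c\cdot v=-v$, a one-line computation yields the coaction $v\mapsto v\ot c$. Comparing with the proof of Lemma~\ref{Centralizing chi}, this Yetter--Drinfeld module is exactly $\chi$: the $E(n)$-module structure $c\mapsto -1,\ x_i\mapsto 0$ and the coaction $1\mapsto 1\ot c$ agree. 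Thus the image of $\sVec$ in $\Z(\C_n)$ is the two-object subcategory $\{\varepsilon,\chi\}$ generated by $\chi$, and its centralizer $\sVec'$ is just the centralizer of $\chi$, since $\varepsilon$ centralizes every object. Now Lemma~\ref{Centralizing chi} says precisely that $V_{\mathbf{a}}$ centralizes $\chi$, so $V_{\mathbf{a}}\in\sVec'$; combined with the first paragraph this gives $\tau(V_{\mathbf{a}})\cong V_{\mathbf{a}}$.

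The step requiring genuine care is the identification of $\sVec$ with $\langle\chi\rangle$ inside $\Z(\C_n)$: one must verify that the braiding-induced Yetter--Drinfeld structure on the odd line $\delta$ really is the object $\chi$ of Lemma~\ref{chi}, matching \emph{both} the module and the comodule data, so that Lemma~\ref{Centralizing chi} is applicable. One should also confirm that the isomorphism $\Pic(\Z(\C_n))\cong\Aut^{br}(\Z(\C_n))$ used to define $\tau$ coincides with the map $\Pic(\C)\to\Aut^{br}(\C)$ featuring in Proposition~\ref{image of induced} for $\C=\Z(\C_n)$; this is the expected compatibility of the identifications of \cite{ENO, DN}, and once it is granted the argument is immediate.
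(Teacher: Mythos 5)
Your proposal is correct and takes essentially the same route as the paper, whose entire proof is the citation of Proposition~\ref{image of induced} and Lemma~\ref{Centralizing chi}. The details you supply---identifying $\iota_c(\sVec)$ inside $\Z(\C_n)$ with the subcategory generated by $\chi$ via the coaction $v\mapsto v\ot c$ computed from $R_0$, so that $\sVec'$ is the centralizer of $\chi$ and contains $V_{\mathbf{a}}$ by Lemma~\ref{Centralizing chi}---are exactly the verifications the paper leaves implicit.
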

\begin{proof}
This follows from Proposition~\ref{image of induced} and Lemma~\ref{Centralizing chi}.
\end{proof}

\begin{proposition}
\label{squared braiding} Let $V_{\mathbf{a}}$ and $V_{\mathbf{b}}$
be two extensions of $\chi$ by $\varepsilon$ associated to
$\mathbf{a} = (a_{1}, \dots, a_{2n})$ and $\mathbf{b} = (b_{1},
\dots, b_{2n})$ and suppose that $\{v_{1}, v_{2}\}$ and $\{v'_{1},
v'_{2}\}$ are bases of $V_{\mathbf{a}}$ and $V_{\mathbf{b}}$,
respectively, on which $D(E(n))$ acts as in $(\ref{D(E(n))-module
structure})$. Then the matrix of $c_{V_{\mathbf{b}},
V_{\mathbf{a}}} \circ c_{V_{\mathbf{a}}, V_{\mathbf{b}}} :
V_{\mathbf{a}} \ot V_{\mathbf{b}} \to V_{\mathbf{b}} \ot
V_{\mathbf{a}}$ in the basis $\{ v_1\ot v_1',\, v_1\ot v_2',\, v_2\ot v_1',\, v_2\ot v_2'\}$ is
$$
\left( \begin{array}{cccc}
1 & 0 & 0 & \omega(\mathbf{b}, \mathbf{a})\\
0 & 1 & 0 & 0 \\
0 & 0 & 1 & 0\\
0 & 0 & 0 & 1\\
\end{array} \right).
$$
\end{proposition}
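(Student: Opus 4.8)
The plan is to make the two braidings completely explicit on basis vectors using the Yetter--Drinfeld descriptions of $V_{\mathbf{a}}$ and $V_{\mathbf{b}}$ provided by Lemma~\ref{V_a as a Yetter-Drinfeld module}, and then to compose them. Recall from Example~\ref{YD as center}, formula \eqref{central structure}, that for objects $Z,W\in {}_{E(n)}\mathcal{YD}^{E(n)}$ the braiding $c_{Z,W}\colon Z\ot W\to W\ot Z$ sends $z\ot w\mapsto w_{(0)}\ot w_{(1)}\cdot z$, where $w\mapsto w_{(0)}\ot w_{(1)}$ is the coaction on $W$ and $w_{(1)}\cdot z$ is the $E(n)$-action on $z\in Z$; this is the same convention already used in the proof of Lemma~\ref{Centralizing chi}. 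By Lemma~\ref{V_a as a Yetter-Drinfeld module} and \eqref{D(E(n))-module structure}, on $V_{\mathbf{a}}$ we have $c\cdot v_1=v_1$, $c\cdot v_2=-v_2$, $x_j\cdot v_1=0$, $x_j\cdot v_2=a_{n+j}v_1$, together with the coaction $\rho(v_1)=v_1\ot 1$ and $\rho(v_2)=\sum_j a_j\,v_1\ot x_j+v_2\ot c$; the same formulas with $\mathbf{b}$, $v_1'$, $v_2'$ (and coefficients $b_{n+j}$) in place of $\mathbf{a}$, $v_1$, $v_2$ hold for $V_{\mathbf{b}}$.

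First I would evaluate $c_{V_{\mathbf{a}},V_{\mathbf{b}}}$ on the four basis vectors $v_i\ot v_j'$ by coacting on the right tensorand and letting the result act on the left tensorand. Three of the images are just flipped basis vectors, and the only one carrying a correction is $v_2\ot v_2'$: here the $x_j$-part of $\rho(v_2')$ meets the action $x_j\cdot v_2=a_{n+j}v_1$, producing $c_{V_{\mathbf{a}},V_{\mathbf{b}}}(v_2\ot v_2')=\big(\sum_j b_j a_{n+j}\big)\,v_1'\ot v_1-v_2'\ot v_2$, the sign coming from $c\cdot v_2=-v_2$. Repeating the computation for $c_{V_{\mathbf{b}},V_{\mathbf{a}}}$ gives, in particular, $c_{V_{\mathbf{b}},V_{\mathbf{a}}}(v_2'\ot v_2)=\big(\sum_j a_j b_{n+j}\big)\,v_1\ot v_1'-v_2\ot v_2'$.

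Composing, the three ``flip'' vectors return to themselves, so only the $v_2\ot v_2'$ coordinate needs tracking. Feeding the image of $v_2\ot v_2'$ under $c_{V_{\mathbf{a}},V_{\mathbf{b}}}$ into $c_{V_{\mathbf{b}},V_{\mathbf{a}}}$, the two sign contributions (from $c\cdot v_2=-v_2$ and $c\cdot v_2'=-v_2'$) cancel to restore $+\,v_2\ot v_2'$ on the diagonal, while the two minor terms combine into $\big(\sum_j (b_j a_{n+j}-a_j b_{n+j})\big)\,v_1\ot v_1'$. Recognizing $\sum_j (b_j a_{n+j}-b_{n+j}a_j)=\omega(\mathbf{b},\mathbf{a})$ then yields exactly the matrix in the statement.

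There is no conceptual obstacle; the work is entirely in the bookkeeping. The two points needing care are (i) fixing, consistently with Lemma~\ref{Centralizing chi}, which tensorand is coacted upon and which receives the induced action, and (ii) marshalling the signs and the two sums so that the off-diagonal entry assembles into $\omega(\mathbf{b},\mathbf{a})$ rather than $\omega(\mathbf{a},\mathbf{b})$ or its negative. The reason only the $v_2\ot v_2'$ entry is affected is structural: a nonzero off-diagonal contribution requires simultaneously a nontrivial $x_j$ appearing in the coaction and a nontrivial action of that same $x_j$, which forces both tensorands to be the ``upper'' basis vectors $v_2$ and $v_2'$.
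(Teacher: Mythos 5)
Your proposal is correct and follows essentially the same route as the paper: both evaluate the braiding via formula \eqref{central structure} (coaction on the right tensorand, action on the left) using the Yetter--Drinfeld structures from Lemma~\ref{V_a as a Yetter-Drinfeld module}, observe that only $v_2\ot v_2'$ acquires a correction term, and assemble $\sum_j(b_ja_{n+j}-a_jb_{n+j})=\omega(\mathbf{b},\mathbf{a})$ exactly as in the paper's displayed computation.
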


\begin{proof} This follows
immediately from (\ref{central structure}) and the Yetter-Drinfeld
module structures of $V_{\mathbf{a}}$ and $V_{\mathbf{b}}$. For
example,
\begin{align*}
c_{V_{\mathbf{b}}, V_{\mathbf{a}}} \circ c_{V_{\mathbf{a}},
V_{\mathbf{b}}} (v_{2} \ot v'_{2}) & = c_{V_{\mathbf{b}},
V_{\mathbf{a}}} \left( \sum_{i = 1}^{n} b_{i} v'_{1} \ot x_{i}
\cdot v_{2} + v'_{2} \ot c \cdot v_{2} \right) \\
& = c_{V_{\mathbf{b}}, V_{\mathbf{a}}} \left( \sum_{i = 1}^{n}
b_{i} a_{n + i} v'_{1} \ot v_{1} - v'_{2} \ot v_{2} \right) \\
& =  \sum_{i = 1}^{n} b_{i} a_{n + i} v_{1} \ot v'_{1} - \sum_{i =
1}^{n} a_{i} v_{1} \ot x_{i} \cdot v'_{2} - v_{2} \ot c \cdot
v'_{2} \\
& =  \sum_{i = 1}^{n} b_{i} a_{n + i} v_{1} \ot v'_{1} - \sum_{i =
1}^{n} b_{n + i}  a_{i} v_{1} \ot v'_{1} + v_{2} \ot v'_{2} \\
& = \omega (\mathbf{b}, \mathbf{a}) v_{1} \ot v'_{1} + v_{2} \ot v'_{2}.
\end{align*}
\end{proof}

For a tensor  subcategory $\mathcal{S}\subset \Z(\C_n)$ let
$\mathcal{S}\cap \Ext_{\Z(\C_n)}^{1}(\chi, \varepsilon)$ denote
the subspace consisting of  equivalence classes of all extensions
$0\to \eps\to V \to \chi\to 0$ such that $V$ belongs to
$\mathcal{S}$.

\begin{proposition}
\label{Grassmannian bijection} The assignment $\L \to \L \cap
\Ext_{\Z(\C_n)}^{1}(\chi, \varepsilon)$ is a bijection between
$\mathbb{L}_0(\C_n)$ and $\text{Lag}(n,\,2n)$.
\end{proposition}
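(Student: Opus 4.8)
The plan is to show that, under the identifications $\mathbb{L}_0(\C_n)\cong\text{Lag}(n,2n)$ from Proposition~\ref{Lagrangian Grassmannian} and $\Ext_{\Z(\C_n)}^{1}(\chi,\varepsilon)\cong k^{2n}$ from Proposition~\ref{the ext space}, the assignment $\L\mapsto\L\cap\Ext_{\Z(\C_n)}^{1}(\chi,\varepsilon)$ becomes literally the identity on $\text{Lag}(n,2n)$; bijectivity is then immediate from Proposition~\ref{Lagrangian Grassmannian}. So I fix $\L\in\mathbb{L}_0(\C_n)$ and, via Propositions~\ref{subcategories of Rep(H)} and~\ref{Lagrangian Grassmannian}, represent it by a surjective Hopf algebra map $f:D(E(n))\to E(n)$ of the form \eqref{surjective maps}, attached to a rank-$n$ matrix $(A\,|\,B)$ whose row space $W$ is a Lagrangian subspace of $(k^{2n},\omega)$. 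Recall from the proof of Proposition~\ref{subcategories of Rep(H)} that $\L$ consists exactly of those $D(E(n))$-modules whose action factors through $f$. Thus the whole statement reduces to one computation: for which $\mathbf{a}\in k^{2n}$ does the module $V_{\mathbf{a}}$ of Proposition~\ref{the ext space} factor through $f$?

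Since $f$ is surjective, $V_{\mathbf{a}}$ factors through $f$ if and only if there is an $E(n)$-module structure $\pi$ on $k^{2}$ with $\pi\circ f$ equal to the given $D(E(n))$-action \eqref{D(E(n))-module structure}, and such a $\pi$ is then unique. Reading off the relations, $\pi(c)$ is forced to be $\diag(1,-1)$ (this is consistent since $f(c)=f(C)=c$ and both $c,C$ act by $\diag(1,-1)$ on $V_{\mathbf{a}}$), while each $\pi(x_j)$ must anticommute with $\pi(c)$ and hence has zero diagonal; write its upper-right entry as $\mu_j$ and its lower-left entry as $\nu_j$.

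Imposing the conditions $\pi(f(X_i))=\rho(X_i)$ and $\pi(f(x_i))=\rho(x_i)$ that come from \eqref{surjective maps} and \eqref{D(E(n))-module structure}, where $\rho$ denotes the $D(E(n))$-action, the lower-left entries yield $A^{t}\nu=B^{t}\nu=0$, i.e. $(A\,|\,B)^{t}\nu=0$, with $\nu=(\nu_1,\dots,\nu_n)^{t}$. Here is the key point: because $(A\,|\,B)$ has rank $n$, the $2n\times n$ matrix $(A\,|\,B)^{t}$ is injective, so $\nu=0$; consequently each $\pi(x_j)$ is strictly upper triangular and all the $E(n)$-relations ($x_j^{2}=0$, pairwise anticommutation, $cx_j=-x_jc$) hold automatically for any choice of the $\mu_j$. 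The upper-right entries then read $(A\,|\,B)^{t}\mu=\mathbf{a}$ with $\mu=(\mu_1,\dots,\mu_n)^{t}$, which is solvable precisely when $\mathbf{a}$ lies in the column space of $(A\,|\,B)^{t}$, that is, in the row space $W$. Hence $V_{\mathbf{a}}\in\L$ if and only if $\mathbf{a}\in W$, so $\L\cap\Ext_{\Z(\C_n)}^{1}(\chi,\varepsilon)=W$.

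This last equality says that the assignment in the statement carries $\L$ to the same Lagrangian subspace $W$ that Proposition~\ref{Lagrangian Grassmannian} already attaches to $\L$, so it is exactly that bijection read through the isomorphism of Proposition~\ref{the ext space}, proving the claim. I expect the middle linear-algebra step to be the main obstacle, and the one subtle point is the use of the rank-$n$ hypothesis to force $\nu=0$: without it the factoring $\pi$ need not exist and the correspondence would fail. As an independent consistency check, isotropy of $\L\cap\Ext_{\Z(\C_n)}^{1}(\chi,\varepsilon)$ also follows conceptually, since $\L\cong\C_n$ is symmetric and hence the squared braiding on any $V_{\mathbf{a}},V_{\mathbf{b}}\in\L$ is trivial, which by Proposition~\ref{squared braiding} forces $\omega(\mathbf{b},\mathbf{a})=0$.
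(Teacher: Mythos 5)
Your proof is correct and takes essentially the same approach as the paper: both identify $\L$ with the full subcategory of $D(E(n))$-modules whose action factors through the surjection $f$ attached to a rank-$n$ matrix $(A|B)$, and both reduce the claim to the same linear algebra on that matrix --- your condition $(A|B)^{t}\mu=\mathbf{a}$ is precisely the paper's identity $\mathbf{a}_{V}=\sum_{j}v_{1}^{*}(x_{j}v_{2})\,r_{j}(M)$. The only organizational difference is that you obtain the biconditional in one pass by analyzing solvability of the linear system for the factoring map $\pi$ (including the forced $\nu=0$), whereas the paper proves the two inclusions separately, constructing explicit $E(n)$-module structures only for the basis rows $r_{i}(M)$ and implicitly using that $\L\cap\Ext^{1}_{\Z(\C_n)}(\chi,\varepsilon)$ is a subspace.
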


\begin{proof}
We saw in Proposition \ref{Lagrangian Grassmannian} that
$\mathbb{L}_{0} (\C_{n}) = \text{Lag}(n,\,2n)$. Let $U \in
\text{Lag}(n,\,2n)$ and $A = (a_{ij})$, $B = (b_{ij})$ be
$n$-by-$n$ matrices such that the rows of $M = (A|B) \in M_{n
\times 2n} (k)$, $r_{1} (M), \dots, r_{n} (M)$, form a basis of
$U$. Then the subcategory of $\Z (\C_{n})$, braided equivalent to
$\C_{n}$, corresponding to $U$ is $\L_{U}$, where $\L_{U}$ is the
image of the restriction  functor associated to $f:
D(E(n)) \to E(n)$, $f (C) = f(c) = c$, $f (X_{i}) = \sum_{j =
1}^{n} a_{ji} x_{j}$, $f (x_{i}) = \sum_{j = 1}^{n} b_{ji} x_{j}$,
$i = 1, \dots, n$. We will show that, under the isomorphism of
Proposition \ref{the ext space}, $\L_{U} \cap
\Ext_{\Z(\C_n)}^{1}(\chi, \varepsilon) = U$, which will prove the
claim.

Let
$$
0 \to \varepsilon \xrightarrow{i} V \xrightarrow{p} \chi \to 0
$$
be an element of $\L_{U} \cap \Ext_{\Z(\C_n)}^{1}(\chi,
\varepsilon)$ and let $\{v_{1}, v_{2}\}$ be a basis of $V$ such
that $v_{1} = i (1)$ and $p (v_{2}) = 1$. If $\{v_{1}^{*},
v_{2}^{*}\}$ is the dual basis of $\{v_{1}, v_{2}\}$ then the
element of $k^{2n}$ corresponding to $V$, under the isomorphism of
Proposition \ref{the ext space}, is
$$
\mathbf{a}_{V} = \big( v_{1}^* (X_{1} \cdot v_{2}), \dots, v_{1}^*
(X_{n} \cdot v_{2}),  v_{1}^* (x_{1} \cdot v_{2}), \dots, v_{1}^*
(X_{1} \cdot v_{2})\big)
$$
Since $v_{1}^{*} (X_{i} \cdot v_{2}) = v_{1}^{*} \big( f(X_{i})
v_{2} \big) = v_{1}^{*} ( \sum_{j} a_{ji} x_{j} v_{2}) = \sum_{j}
a_{ji} v_{1}^{*} (x_{j} v_{2})$ and $v_{1}^{*} (x_{i} \cdot v_{2})
= v_{1}^{*} \big( f(x_{i}) v_{2} \big) = v_{1}^{*} ( \sum_{j}
b_{ji} x_{j} v_{2}) = \sum_{j} b_{ji} v_{1}^{*} (x_{j} v_{2})$,
for all $i = 1, \dots, n$, we deduce that
$$
\mathbf{a}_{V} = \sum_{j = 1}^{n} v_{1}^{*} (x_{j} v_{2}) (a_{j1},
\dots, a_{jn}, b_{j1}, \dots, b_{jn}) = \sum_{j = 1}^{n} v_{1}^{*}
(x_{j} v_{2}) r_{j} (M)
$$
Thus, $\mathbf{a}_{V} \in U$, for all $V \in \L_{U} \cap
\Ext_{\Z(\C_n)}^{1}(\chi, \varepsilon)$. To complete the proof we
need only show that $V_{r_{i}(M)} \in \L_{U}$, for all $i = 1,
\dots, n$. A quick check shows that the representation
$V_{r_{i}(M)}$ is obtained from the following matrix
representation of $E(n)$:
$$
E(n) \to M_{2}(k), \quad c \mapsto \left( \begin{array}{cc} 1 & 0\\
0 & -1 \end{array}\right), \quad x_{j} \mapsto \left(
\begin{array}{cc} 0 & \delta_{i,j} \\ 0 & 0 \end{array}\right), \quad j
= 1, \dots, n
$$
by restriction of scalars via $f$.
\end{proof}

\subsection{Representation of $\Aut^{br}(\Z(\C_n))$ on $\Ext_{\Z(\C_n)}^{1}(\varepsilon,\,\chi)$}

Recall from Proposition~\ref{action on Ext} that there is a group homomorphism
\begin{equation}
\label{rho}
\rho: \Aut^{br}(\Z(\C_n)) \to PGL(\Ext_{\Z(\C_n)}^1(\varepsilon,\,\chi)) = PGL_{2n}(k).
\end{equation}

Note that the projective symplectic group $PSp_{2n}(k) = Sp_{2n}(k)/\{\pm I_{2n}\}$
can be viewed as a subgroup of $PGL_{2n}(k)$.

\begin{proposition}
\label{symplectic image}
The image of the group homomorphism \eqref{rho}
belongs to $PSp_{2n}(k)$.
\end{proposition}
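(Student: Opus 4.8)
The plan is to exploit the fact that the squared braiding on extensions computes the symplectic form $\omega$ (Proposition~\ref{squared braiding}) together with the fact that every $\alpha \in \Aut^{br}(\Z(\C_n))$, being a braided tensor autoequivalence, preserves squared braidings. First I note that $\alpha$ fixes both invertible objects: it preserves the unit $\varepsilon = \be$, and since $\{\varepsilon, \chi\}$ is the complete list of invertible objects of $\Z(\C_n)$, it also fixes $\chi$. Hence $\rho(\alpha)$ is a well-defined element of $PGL_{2n}(k)$, and a choice of isomorphisms $\phi_\varepsilon: \alpha(\varepsilon) \xrightarrow{\sim} \varepsilon$ (taken to be the canonical unit constraint) and $\phi_\chi: \alpha(\chi) \xrightarrow{\sim} \chi$ pins down a representative $T \in GL_{2n}(k)$, characterized by $\alpha(V_{\mathbf{a}}) \cong V_{T\mathbf{a}}$ as extensions of $\chi$ by $\varepsilon$ once the structure maps $\alpha(i_{\mathbf{a}}),\alpha(p_{\mathbf{a}})$ are transported through $\phi_\varepsilon, \phi_\chi$. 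The goal is to show that $T$ can be chosen in $Sp_{2n}(k)$.

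The key step is to reformulate Proposition~\ref{squared braiding} intrinsically. Writing $i_{\mathbf{a}}: \varepsilon \to V_{\mathbf{a}}$ and $p_{\mathbf{a}}: V_{\mathbf{a}} \to \chi$ for the structure maps and fixing an isomorphism $j: \chi \otimes \chi \xrightarrow{\sim} \varepsilon \otimes \varepsilon$ (both sides being the unit up to constraints, as $\chi^2 \cong \varepsilon$), the matrix computed in Proposition~\ref{squared braiding} is precisely the statement that
\[
c_{V_{\mathbf{b}}, V_{\mathbf{a}}} \circ c_{V_{\mathbf{a}}, V_{\mathbf{b}}} - \id_{V_{\mathbf{a}} \otimes V_{\mathbf{b}}} = \omega(\mathbf{b}, \mathbf{a}) \, (i_{\mathbf{a}} \otimes i_{\mathbf{b}}) \circ j \circ (p_{\mathbf{a}} \otimes p_{\mathbf{b}}),
\]
since the right-hand side is exactly the rank-one endomorphism carrying $v_2 \otimes v_2'$ to $v_1 \otimes v_1'$ and killing the other three basis vectors. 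The virtue of this form is that every ingredient except the scalar $\omega(\mathbf{b},\mathbf{a})$ is canonical.

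I would then apply $\alpha$ to this identity and conjugate by the tensor structure $J$ of $\alpha$. Because $\alpha$ is braided, the left-hand side becomes $c_{\alpha(V_{\mathbf{b}}), \alpha(V_{\mathbf{a}})} \circ c_{\alpha(V_{\mathbf{a}}), \alpha(V_{\mathbf{b}})} - \id$, which by the same intrinsic identity applied to $V_{T\mathbf{a}}, V_{T\mathbf{b}}$ equals $\omega(T\mathbf{b}, T\mathbf{a})$ times the canonical rank-one map for $\alpha(V_{\mathbf{a}}), \alpha(V_{\mathbf{b}})$. Transporting the right-hand side through $\alpha$, $J$, $\phi_\varepsilon$, $\phi_\chi$ turns $(i_{\mathbf{a}} \otimes i_{\mathbf{b}}) \circ j \circ (p_{\mathbf{a}} \otimes p_{\mathbf{b}})$ into the \emph{same} canonical rank-one map multiplied by a scalar $c$ built from $\phi_\chi$, $\phi_\varepsilon$, $J$, and $\alpha(j)$ versus $j$. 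Comparing gives $\omega(T\mathbf{b}, T\mathbf{a}) = c \, \omega(\mathbf{b}, \mathbf{a})$ for all $\mathbf{a}, \mathbf{b}$.

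The main obstacle is precisely to verify that $c$ is independent of $\mathbf{a}$ and $\mathbf{b}$; this is where the rank-one reformulation pays off, since the only $\mathbf{a},\mathbf{b}$-dependence on the right enters through $i_{\mathbf{a}}, i_{\mathbf{b}}, p_{\mathbf{a}}, p_{\mathbf{b}}$, which are transported uniformly by $\alpha$ and $J$, leaving a residual scalar that is a fixed function of $\phi_\chi, J, \alpha(j)$ alone. Granting this, $T$ is a conformal symplectic transformation with multiplier $c \in k^\times$. Finally, since $k$ is algebraically closed I may choose $\mu \in k^\times$ with $\mu^2 = c^{-1}$; replacing $T$ by $\mu T$ leaves $\rho(\alpha) \in PGL_{2n}(k)$ unchanged but gives $\omega(\mu T \mathbf{b}, \mu T \mathbf{a}) = \mu^2 c \,\omega(\mathbf{b}, \mathbf{a}) = \omega(\mathbf{b}, \mathbf{a})$, so $\mu T \in Sp_{2n}(k)$ and hence $\rho(\alpha) \in PSp_{2n}(k)$. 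This rescaling, available because $k$ is algebraically closed and harmless because we work projectively, is exactly the reason the image lands in $PSp_{2n}(k)$ rather than merely in the projective conformal symplectic group.
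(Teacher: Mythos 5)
Your proposal is correct and takes essentially the same route as the paper: both apply $\alpha$ to the squared-braiding identity of Proposition~\ref{squared braiding} and use that a braided autoequivalence transports squared braidings through its tensor structure $J$, so that the symplectic form read off from the rank-one part is preserved. The only difference is bookkeeping: the paper works directly with the lift $\mathbf{a}\mapsto\alpha(\mathbf{a})$ determined by $(\alpha(V_{\mathbf{a}}),\alpha(i),\alpha(p))$ and concludes $\omega(\alpha(\mathbf{a}),\alpha(\mathbf{a'}))=\omega(\mathbf{a},\mathbf{a'})$ on the nose, whereas you track the residual scalar coming from the identifications $\alpha(\varepsilon)\cong\varepsilon$, $\alpha(\chi)\cong\chi$ and eliminate it by rescaling over the algebraically closed field --- a slightly more careful rendering of the same argument.
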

\begin{proof}
Let
\begin{eqnarray*}
& 0 \to \varepsilon \xrightarrow{i} V_{\mathbf{a}} \xrightarrow{p} \chi \to 0 &\\
& 0 \to \varepsilon \xrightarrow{i'} V_{\mathbf{a'}} \xrightarrow{p'} \chi \to 0  &
\end{eqnarray*}
be a pair of elements in $\Ext_{\Z(\C_n)}^1(\chi,\,\eps)$.
By Proposition~\ref{squared braiding} we have
\begin{equation}
\label{cVaVa'}
c_{V_\mathbf{a'}, V_{\mathbf{a}}} \circ c_{V_\mathbf{a}, V_{\mathbf{a'}}} = \id_{V_\mathbf{a}\ot V_{\mathbf{a'}}}
+\omega(\mathbf{a}, \mathbf{a'})\,  (p \ot p')\circ (i \ot i').
\end{equation}
Here $p\ot p'$ is a morphism from $V_\mathbf{a}\ot  V_{\mathbf{a'}}$ to $\chi\ot \chi=\eps$.
For  $\alpha\in \Aut^{br}(\Z(\C_n))$ let  $\alpha(\mathbf{a})\in \Ext_{\Z(\C_n)}^1(\chi,\,\eps))$ be such that
$\alpha( V_{\mathbf{a}})   = V_{\alpha(\mathbf{a})}$. Applying $\alpha$ to both sides of \eqref{cVaVa'}
we obtain
\begin{equation*}
\alpha (c_{V_\mathbf{a'}, V_{\mathbf{a}}} \circ c_{V_\mathbf{a}, V_{\mathbf{a'}}} )=
\id_{V_{\alpha(\mathbf{a})}\ot V_{\alpha(\mathbf{a'})}}
+\omega(\mathbf{a}, \mathbf{a'})\,  \alpha((p \ot p')\circ (i \ot i')).
\end{equation*}
On the other hand,
\begin{eqnarray*}
\lefteqn{ \alpha (c_{V_\mathbf{a'}, V_{\mathbf{a}}} \circ c_{V_\mathbf{a}, V_{\mathbf{a'}}} )} \\
&=& J_{V_\mathbf{a}, V_{\mathbf{a'}}} \circ
( c_{V_{\alpha(\mathbf{a'})}, V_{\alpha({\mathbf{a}})}} \circ c_{V_{\alpha(\mathbf{a})}, V_{\alpha({\mathbf{a'}})}}) \circ
J^{-1}_{V_\mathbf{a}, V_{\mathbf{a'}}}   \\
&=&  \id_{V_{\alpha(\mathbf{a})}\ot V_{\alpha(\mathbf{a'})}}
+\omega(\alpha(\mathbf{a}), \alpha(\mathbf{a'}))\,
J_{V_\mathbf{a}, V_{\mathbf{a'}}} \circ  (\alpha(p)\ot \alpha(p')) \circ (\alpha(i) \ot \alpha(i'))  \circ J^{-1}_{V_\mathbf{a}, V_{\mathbf{a'}}},
\end{eqnarray*}
where $J_{X,Y}: \alpha(X)\ot \alpha(Y) \xrightarrow{\sim} \alpha(X\ot Y)$ denotes the tensor functor structure of $\alpha$.

Thus, $\omega(\alpha(\mathbf{a}), \alpha(\mathbf{a'})) = \omega(\mathbf{a}, \mathbf{a'})$, as required.
\end{proof}

Recall the homomorphisms
\begin{eqnarray*}
\iota_1 &:&  \Aut_{\text{Hopf}}(H)\to \Aut^{br}(\Z(\Rep(H)),\\
\iota_2 &:&  \Hinv(H) \to \Aut^{br}(\Z(\Rep(H)),\\
\iota_3 &:&  \Hinv(H^*) \to \Aut^{br}(\Z(\Rep(H)).
\end{eqnarray*}
from Remark~\ref{iotas}. In the next Proposition  we compute the images
of the compositions of these homomorphisms with \eqref{rho}.

\begin{proposition}
\label{3 images}
We have
\begin{enumerate}
\item[(i)] $\rho\circ \iota_1 \big( \Aut_{\textnormal{Hopf}}(E(n))
\big) = \left\{
\begin{pmatrix} A  & 0 \\ 0 & (A^t)^{-1} \end{pmatrix} \mid A\in GL_{2n}(k) \right\}$,
\item[(ii)] $\rho \circ \iota_2\big(
\textnormal{H}^{2}_{\textnormal{inv}}(E(n)) \big) = \left\{
\begin{pmatrix} I_{n} & 0 \\ B & I_{n} \end{pmatrix} \mid B = B^t \right\}$,
\item[(iii)] $\rho \circ \iota_3
\big(\textnormal{H}^{2}_{\textnormal{inv}}(E(n)^*)\big) = \left\{
\begin{pmatrix} I_{n}  & B \\ 0 & I_{n} \end{pmatrix} \mid B = B^t
\right\}$.
\end{enumerate}
Here each matrix denotes the class in $PSp_{2n}(k)$.
\end{proposition}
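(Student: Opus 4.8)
The plan is to compute each of the three composites directly on the model extensions $V_{\mathbf a}$ of Proposition~\ref{the ext space}, using the explicit descriptions of the induced autoequivalences from Examples~\ref{twisted YD-modules0}, \ref{twisted YD-modules1}, and \ref{twisted YD-modules2}, and then to read off the resulting linear map on $k^{2n}\cong\Ext_{\Z(\C_n)}^1(\chi,\eps)$. The organizing principle is Lemma~\ref{V_a as a Yetter-Drinfeld module}: the first $n$ coordinates $(a_1,\dots,a_n)$ of $\mathbf a$ live in the $E(n)$-comodule structure of $V_{\mathbf a}$ (they are recovered as the coefficients of the dual generators $X_i$, via the coaction), while the last $n$ coordinates $(a_{n+1},\dots,a_{2n})$ live in the $E(n)$-module structure (the action of the $x_i$). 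Since $\rho$ is only a projective representation, it suffices to identify $\Gamma_{(-)}(V_{\mathbf a})$ with some $V_{\mathbf a'}$ up to rescaling the basis $\{v_1,v_2\}$; Proposition~\ref{symplectic image} then guarantees the resulting class lands in $PSp_{2n}(k)$, a useful check pinning down the block form.

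For (i), a Hopf automorphism of $E(n)$ corresponds by \eqref{Aut En} to $A=(a_{ij})\in GL_n(k)$ acting by $c\mapsto c$, $x_i\mapsto\sum_j a_{ji}x_j$. Substituting this into \eqref{Delta a}, the module part of $V_{\mathbf a}$ is twisted by $A$ on the generators $x_i$, while the coaction is twisted by the same automorphism applied to its $E(n)$-valued second leg. Re-identifying the outcome as $V_{\mathbf a'}$ through Lemma~\ref{V_a as a Yetter-Drinfeld module}, the module ($x_i$) block and the comodule ($X_i$) block transform by $A$ and by its inverse-transpose: the comodule leg takes values in $E(n)$, so it transforms contravariantly, forcing the inverse-transpose on one block. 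This yields the block-diagonal class $\begin{pmatrix}A&0\\0&(A^t)^{-1}\end{pmatrix}$, and as $A$ ranges over $GL_n(k)$ we obtain the whole stated subgroup.

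For (ii), the autoequivalence $\Gamma_{F_{\sigma_M}}$ attached to the invariant cocycle $\sigma_M$, $M=M^t\in\Sym_n(k)$ (Section~\ref{inv cocycles on En section}), fixes the $E(n)$-comodule structure and alters only the action, by \eqref{effect of invariant cycles}. Hence the first $n$ coordinates are unchanged, so the upper-left block is $I_n$. Expanding \eqref{effect of invariant cycles} for $h=x_i$ acting on $v_2$, using $\Delta^{(2)}(x_i)=1\ot 1\ot x_i+1\ot x_i\ot c+x_i\ot c\ot c$, the coaction $\rho(v_2)=\sum_j a_j\,v_1\ot x_j+v_2\ot c$ from Lemma~\ref{V_a as a Yetter-Drinfeld module}, and the values $\sigma_M(c^ix_k\ot c^jx_l)=(-1)^j m_{kl}$, the surviving contribution adds $\sum_j m_{ij}a_j$ to the $x_i$-action. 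This is precisely the lower unitriangular class $\begin{pmatrix}I_n&0\\ M&I_n\end{pmatrix}$; since $\Hinv(E(n))\cong\Sym_n(k)$, the block runs over all symmetric matrices.

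Finally, (iii) is the dual of (ii). An invariant twist on $E(n)$ is an invariant cocycle on $E(n)^*$, and by \eqref{Delta J} the induced $\Gamma_{F_J}$ fixes the $E(n)$-module structure while modifying the coaction; via the self-duality $E(n)\cong E(n)^*$ recalled in Section~\ref{inv cocycles on En section}, which interchanges the two families of generators and hence the two $k^n$ summands of $k^{2n}$, the computation of (ii) transports to the upper unitriangular family $\begin{pmatrix}I_n&M\\0&I_n\end{pmatrix}$ with $M$ symmetric. I expect the main obstacle to be the explicit expansion in (ii): one must track all three Sweedler components of $\Delta^{(2)}(x_i)$ against the cocycle values and the comodule structure of $V_{\mathbf a}$, verifying that the diagonal blocks stay trivial and that the off-diagonal block is \emph{exactly} the symmetric matrix $M$ (not an unsymmetrized or rescaled variant). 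Once (ii) is nailed down, (i) and (iii) follow with little additional effort, and Proposition~\ref{symplectic image} certifies that all three families indeed consist of symplectic classes.
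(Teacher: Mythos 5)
Your overall strategy is exactly the paper's: evaluate each induced autoequivalence on the model extensions $V_{\mathbf{a}}$ of Proposition~\ref{the ext space} and read off the induced linear map on $\Ext_{\Z(\C_n)}^{1}(\chi,\eps)\cong k^{2n}$, using Lemma~\ref{V_a as a Yetter-Drinfeld module} to split the coordinates into a comodule block and a module block. Parts (i) and the general framework are fine. However, your computation in (ii) --- the step you yourself single out as the crux --- is incomplete, and your emphatic claim that the lower-left block is ``exactly the symmetric matrix $M$ (not an unsymmetrized or rescaled variant)'' is false. Formula \eqref{effect of invariant cycles} contains \emph{two} cocycle evaluations: $\sigma(h_{(3)}\ot v_{(1)})$ and $\sigma^{-1}\bigl((h_{(2)}\cdot v_{(0)})_{(1)}\ot h_{(1)}\bigr)$. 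You tracked only the first, through the Sweedler component $1\ot 1\ot x_i$ of $\Delta^{(2)}(x_i)$, which indeed gives $\sum_j m_{ij}a_j$. But the component $x_i\ot c\ot c$ also survives: there $h_{(1)}=x_i$, and since $c\cdot v_2=-v_2$ while $\sigma_M^{-1}(x_j\ot x_i)=-m_{ji}$, the two signs cancel and this term contributes $\sum_j m_{ji}a_j$. The correct block is therefore $M+M^t$ (equal to $2M$ when $M$ is symmetric), which is precisely what the paper's proof obtains. Because $k$ has characteristic $0$, the assignment $M\mapsto M+M^t$ still maps $\Hinv(E(n))\cong\Sym_n(k)$ onto all symmetric matrices, so the stated \emph{image} is unaffected; your slip does not sink the proposition, but a careful execution of your own plan would have produced an answer ``contradicting'' the criterion you set for success.

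For (iii) you also depart from the paper: you propose transporting (ii) through the self-duality $E(n)\cong E(n)^*$, whereas the paper computes directly with the invariant twist $J_M$ via \eqref{Delta J}, finding the block $-(M+M^t)$ (the sign and the factor $2$ are again harmless for the image). The duality route is plausible but, as written, is only a sketch: one must realize the self-duality as a symmetry of the whole setting --- of $D(E(n))$, of its canonical $R$-matrix, and of the identification $\Ext_{\Z(\C_n)}^{1}(\chi,\eps)\cong k^{2n}$ --- and verify that it exchanges the two coordinate blocks in the way you assert, with controlled signs. The paper's direct computation with $J_M$ sidesteps all of this at the cost of one more expansion of the same kind as in (ii); given the error above, I would recommend doing (iii) directly as well, since it provides an independent check on the cocycle bookkeeping.
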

\begin{proof}
(i)  This is clear in view of Proposition~\ref{Aut-br Cn}.

(ii) Let $\sigma_{M} \in (E(n) \ot E(n))^{*}$ be the invariant
$2$-cocycle associated to $M = (m_{ij}) \in \Sym_{n}(k)$,
$\Gamma_{\sigma_{M}}$ the autoequivalence of $\Rep \big( D (E(n))
\big)$ induced by $\sigma_{M}$ and $V_{\mathbf{a}} \in \Ext_{\Z(\C_n)}^{1}
(\chi, \varepsilon)$, where $\mathbf{a} = (a_{1}, \dots, a_{2n})
\in k^{2n}$. Viewing $V_{\mathbf{a}}$ as a Yetter-Drinfeld module
over $E (n)$, by Lemma \ref{V_a as a Yetter-Drinfeld module}, and
regarding $\Gamma_{\sigma_{M}}$ as an autoequivalence of $_{E(n)}
\mathcal{YD}^{E(n)}$, we have, according to Example \ref{twisted
YD-modules2}, that $\Gamma_{\sigma_{M}} (V_{\mathbf{a}}) =
V_{\mathbf{a}}$ as an $E(n)$-comodule, with the $E (n)$-module
structure given by
$$
h \cdot v = \sigma_{M}^{-1} \Big( \big( h_{(2)} \cdot z_{(0)}
\big)_{(1)}\ot  h_{(1)} \Big) \sigma_{M} (h_{(3)}\ot  z_{(1)}) \big(
h_{(2)} \cdot z_{(0)} \big)_{(0)}, \quad h \in E(n), v \in
V_{\mathbf{a}}.
$$
Let $\{v_{1}, v_{2} \}$ be a basis of $V_{\mathbf{a}}$ such that
the $D (E(n))$-module structure of $V_{\mathbf{a}}$ is given by
(\ref{D(E(n))-module structure}). Then, a straightforward
computation shows that, $h \cdot v_{1} = \varepsilon(h) v_{1}$,
for all $h \in E(n)$, $c \cdot v_{2} = - v_{2}$ and
$$
x_{i} \cdot v_{2} = a_{n + j} + \left[ \sum_{i = 1}^{n} (m_{ij} +
m_{ji}) a_{i} \right] v_{1}
$$
for all $i = 1, \dots, n$. Thus, $\Gamma_{\sigma_{M}}
(V_{\mathbf{a}}) = V_{\mathbf{a}'}$, where
$$
\mathbf{a}'^{t} = \left( \begin{array}{cc}
I_{n}     & 0 \\
M + M^{t} & I_{n}
\end{array} \right) \mathbf{a}^{t}
$$
and the result follows.

(iii) Let $M = (m_{ij}) \in \Sym_{n}(k)$ and let
$$
J_{M} = \frac{1}{4} \sum_{i, j, P, Q} \sigma_{M} (c^{i}x_{P}\ot
c^{j} x_{Q}) (x_{P} + (-1)^{i} cx_{P}) \ot (x_{Q} +
(-1)^{j}cx_{Q})
$$
be the invariant twist associated to $M$. Observe that
$$
J_{M} = 1 \ot 1 + \sum_{j, l = 1}^{n} m_{jl} x_{j} \ot cx_{l} + L
$$
where $L$ is a linear combination of $c^{i}x_{P} \ot c^{j}x_{Q}$,
with $i, j \in \{0, 1\}$ and $P$, $Q \subseteq \{1, \dots, n\}$
such that $|P| \geq 2$ or $|Q| \geq 2$. Let $V_{\mathbf{a}} \in
\Ext_{\Z(\C_n)}^{1} (\chi,\, \varepsilon)$ and $\Gamma_{J_{M}}$ be the
autoequivalence of $\Rep \big( D (E(n)) \big)$ induced by $J_{M}$.
As a Yetter-Drinfeld module over $E(n)$, $\Gamma_{J_{M}}
(V_{\mathbf{a}})$ is $V_{\mathbf{a}}$ as an $E(n)$-module, with
the comodule structure given by
$$
\rho_{J_{M}} (v) = (J_{M}^{-1})^{2} \cdot (J_{M}^{1} \cdot v)\0
\ot (J_{M}^{-1})^{1}(J_{M}^{1} \cdot v)\1 J_{M}^{2},
$$
for all $v \in V_{\mathbf{a}}$. If $\{v_{1}, v_{2}\}$ is a basis
for $V_{\mathbf{a}}$ such that the action of $D(E(n))$ on
$V_{\mathbf{a}}$ is given by (\ref{D(E(n))-module structure}),
then one can easily check, using Lemma \ref{V_a as a
Yetter-Drinfeld module}, that $\rho_{J_{M}} (v_{1}) = v_{1} \ot 1$
and
$$
\rho_{J_{M}} (v_{2}) = \sum_{i = 1}^{n} a_{i} v_{1} \ot x_{i} +
\sum_{i = 1}^{n} \left( \sum_{j = 1}^{n} a_{n + j} (m_{ij} +
m_{ji}) \right) v_{1} \ot cx_{i} + v_{2} \ot c.
$$
Taking into account that the induced $E(n)^{*}$-module structure
of $\Gamma_{J_{M}} (V_{\mathbf{a}})$ is $f \cdot v = \sum
f(v_{(1)}) v_{(0)}$, for all $f \in E(n)^{*}$ and $v \in
\Gamma_{J_{M}} (V_{\mathbf{a}})$ we readily deduce the
$D(E(n))$-module structure of $\Gamma_{J_{M}} (V_{\mathbf{a}})$.
We have $C \cdot v_{1} = v_{1}$, $C \cdot v_{2} = - v_{2}$, $X_{i}
\cdot v_{1} = 0$ and
$$
X_{i} \cdot v_{2} = (x_{i}^{*} - (cx_{i})^{*}) \cdot v_{2} =
\left( a_{i} - \sum_{j = 1}^{n} a_{n + j} (m_{ij} + m_{ji})
\right) v_{1}.
$$
Thus, $\Gamma_{J_{M}} (V_{\mathbf{a}}) = V_{\mathbf{a}'}$, where
$$
\mathbf{a}'^{t} = \left( \begin{array}{cc}
I_{n} & -(M + M^{t}) \\
0     & I_{n}
\end{array} \right) \mathbf{a}^{t}
$$
which concludes the proof.
\end{proof}

\begin{corollary}
\label{precise image of rho}
The image  of homomorphism \eqref{rho} is $PSp_{2n}(k)$.
\end{corollary}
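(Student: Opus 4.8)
The plan is to combine the two halves of the computation that are already in place. By Proposition~\ref{symplectic image} the image of $\rho$ is contained in $PSp_{2n}(k)$, so the only thing left is the reverse inclusion: that the image is \emph{all} of $PSp_{2n}(k)$. For this I would invoke Proposition~\ref{3 images}, which tells us that the image of $\rho$ contains the three families of classes
\[
\begin{pmatrix} A & 0 \\ 0 & (A^t)^{-1} \end{pmatrix}, \qquad \begin{pmatrix} I_n & B \\ 0 & I_n \end{pmatrix}, \qquad \begin{pmatrix} I_n & 0 \\ B & I_n \end{pmatrix},
\]
where $A$ ranges over $GL_n(k)$ and $B$ over symmetric matrices $B=B^t$. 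Since the image is a subgroup of $PSp_{2n}(k)$ containing these, it suffices to prove that these three families generate $Sp_{2n}(k)$, for then their classes generate $PSp_{2n}(k)$ and equality follows.

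The key observation is that, with respect to the polarization $k^{2n}=k^n\oplus k^n$ determined by $\omega$, these three families are exactly the Levi factor and the two opposite unipotent radicals of the Siegel (Lagrangian) parabolic subgroup of $Sp_{2n}(k)$. Concretely, the block-diagonal matrices form a copy of $GL_n(k)$ that contains the maximal torus $\diag(t_1,\dots,t_n,t_1^{-1},\dots,t_n^{-1})$ together with the root subgroups attached to the roots $e_i-e_j$ for $i\neq j$; the upper (respectively lower) unipotent symmetric blocks supply the root subgroups for $e_i+e_j$ and $2e_i$ (respectively $-(e_i+e_j)$ and $-2e_i$). As $i,j$ range over $\{1,\dots,n\}$ these roots exhaust the entire root system of type $C_n$, and one checks the count $n(n-1)+2\bigl(\tfrac{n(n-1)}{2}+n\bigr)=2n^2$ to confirm that nothing is missing.

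Since $Sp_{2n}(k)$ is a split, simply connected, semisimple group over the field $k$, it is generated by its maximal torus together with all of its root subgroups; hence the three families generate $Sp_{2n}(k)$, and their classes generate $PSp_{2n}(k)$. Combining this with the containment from Proposition~\ref{symplectic image} yields the desired equality. The one point that requires care — and which I regard as the main, though entirely standard, input — is this generation statement for the symplectic group. If one prefers to avoid the language of root data, the same conclusion can be obtained by induction on $n$ from the rank-one case: inside each coordinate $SL_2$-block the two unipotents and the torus manifestly generate $SL_2(k)$, and one uses these, together with the $GL_n$ Levi, to clear matrix entries and reduce a given symplectic matrix to a smaller-rank one.
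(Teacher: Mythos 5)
Your proposal is correct and takes essentially the same approach as the paper: the inclusion of the image in $PSp_{2n}(k)$ comes from Proposition~\ref{symplectic image}, and surjectivity comes from the fact that the three subgroups exhibited in Proposition~\ref{3 images} generate $PSp_{2n}(k)$. The only difference is that the paper cites this generation fact without proof, while you spell out the (correct) standard justification via the Levi factor and opposite unipotent radicals of the Siegel parabolic, i.e.\ via the root subgroups of type $C_n$.
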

\begin{proof}
The three subgroups from Proposition~\ref{3 images}
generate $PSp_{2n}(k)$, so the statement follows
from Proposition~\ref{symplectic image}.
\end{proof}

Recall from Corollary~\ref{tau on Ext}
that $\tau \in \Aut^{br}(\Z(\C_n))$ denotes the  autoequivalence induced from the
non-trivial element of $\Pic(\sVec)$, where  $\sVec$ is viewed as a  tensor
subcategory of  $\Z(\C_n)$.

\begin{proposition}
\label{Z/2Z}
\begin{enumerate}
\item[(i)] The kernel of  homomorphism \eqref{rho} is $\langle
\tau \rangle$ (and so  is isomorphic to $\mathbb{Z}/2\mathbb{Z}$).
\item[(ii)] The restriction of homomorphism \eqref{rho}  on the
subgroup of $\Aut^{br}(\Z(\C_n))$ generated by the images of
$\Aut_{\textnormal{Hopf}}(E(n))$,
$\textnormal{H}^{2}_{\textnormal{inv}} (E(n))$,
$\textnormal{H}^{2}_{\textnormal{inv}} (E(n)^*)$ is injective.
\end{enumerate}
\end{proposition}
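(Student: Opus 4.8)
To prove (i), the plan is first to check that $\langle\tau\rangle\subseteq\Ker\rho$ and that $\tau\neq1$, and then to pin down $\Ker\rho$ exactly via the stabilizer computation. By Corollary~\ref{tau on Ext}, $\tau$ fixes the isomorphism class of every $V_{\mathbf a}$, so it acts trivially on $\Ext_{\Z(\C_n)}^1(\chi,\eps)$ and $\rho(\tau)=1$. Since by Theorem~\ref{image of Pic} the embedding $\Pic(\C_n)\hookrightarrow\Aut^{br}(\Z(\C_n);\,\C_n)$ is injective and $\tau$ is the image of the nontrivial torsion element of $\Pic(\C_n)\cong\Sym_n(k)\times\mathbb{Z}/2\mathbb{Z}$ (the part induced from $\sVec$, Section~\ref{Picard Cn section}), we get $\langle\tau\rangle\cong\mathbb{Z}/2\mathbb{Z}$. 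For the reverse inclusion, any $\alpha\in\Ker\rho$ acts on $\Ext_{\Z(\C_n)}^1(\chi,\eps)\cong k^{2n}$ by a scalar, hence fixes every Lagrangian subspace; by Proposition~\ref{Grassmannian bijection} it therefore fixes every point of $\mathbb{L}_0(\C_n)$, in particular $\iota_c(\C_n)$, so $\alpha\in\St(\iota_c(\C_n))\cong\Pic(\C_n)\rtimes\Aut^{br}(\C_n)$ by Proposition~\ref{stabilizer of C}. On this stabilizer, Proposition~\ref{3 images}(i),(ii) shows that $\rho$ carries the $\Aut^{br}(\C_n)$-factor and the subgroup $\Pic_0(\C_n)=\Sym_n(k)$ injectively onto the Levi subgroup and the unipotent radical of the Siegel parabolic of $PSp_{2n}(k)$ (using that $A\mapsto\diag(A,(A^t)^{-1})$ is injective modulo $\{\pm I_n\}$ and that $M\mapsto M+M^t$ is injective on lower triangular matrices), while $\tau$ lies in the kernel. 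As the Levi and the radical intersect trivially, an element of $\Ker(\rho|_{\St(\iota_c(\C_n))})$ must have trivial $\Aut^{br}(\C_n)$- and $\Sym_n(k)$-components, so it lies in $\langle\tau\rangle$; hence $\Ker\rho=\langle\tau\rangle$.

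For (ii), write $G$ for the subgroup generated by the images of $\iota_1,\iota_2,\iota_3$. These images generate $PSp_{2n}(k)$ under $\rho$ (Corollary~\ref{precise image of rho}) and, by (i), $\Ker\rho=\langle\tau\rangle$ is central of order $2$; moreover $\rho$ is injective on each of the three generating subgroups by the explicit parametrizations of Proposition~\ref{3 images}. Hence $\rho(G)=PSp_{2n}(k)$, and $\rho|_G$ is injective precisely when $\tau\notin G$. The entire content is therefore the splitting of the central extension
\[
1\to\langle\tau\rangle\to\Aut^{br}(\Z(\C_n))\xrightarrow{\;\rho\;}PSp_{2n}(k)\to1,
\]
with $G$ as a complement. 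The plan is to exploit the algebraic structure: the infinitesimal computation of Cuadra and Davydov identifies the relevant Lie algebra with $\mathfrak{sp}(2n)$, so $\rho$ is an isogeny onto the adjoint group $PSp_{2n}(k)$ with finite central kernel $\mathbb{Z}/2\mathbb{Z}$. Since $PSp_{2n}(k)$ is of type $C_n$ with fundamental group $\mathbb{Z}/2\mathbb{Z}$, the only central extensions by $\mathbb{Z}/2\mathbb{Z}$ are the trivial one $PSp_{2n}(k)\times\mathbb{Z}/2\mathbb{Z}$ and the simply connected cover $Sp_{2n}(k)$.

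I would then rule out $Sp_{2n}(k)$ using the Levi subgroup. The Hopf automorphisms give $\iota_1:\GL_n(k)\to\Aut^{br}(\Z(\C_n))$ with kernel $\{\pm I_n\}$, whose image maps isomorphically under $\rho$ onto the Levi $\overline L=\GL_n(k)/\{\pm I_n\}$ of the Siegel parabolic. If $\Aut^{br}(\Z(\C_n))$ were $Sp_{2n}(k)$, then $\rho^{-1}(\overline L)$ would be the connected Levi $\GL_n(k)$, and $\iota_1(\GL_n(k))\cong\GL_n(k)/\{\pm I_n\}$ would be an index-two complement of the center $\{\pm I_n\}$ inside it; but $\GL_n(k)$ has no subgroup of index two, because its abelianization is $k^\times$ (via the determinant), which is divisible. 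This contradiction leaves $\Aut^{br}(\Z(\C_n))\cong PSp_{2n}(k)\times\mathbb{Z}/2\mathbb{Z}$. Finally, $\GL_n(k)$ and $\Sym_n(k)$ are divisible and so admit no surjection onto $\mathbb{Z}/2\mathbb{Z}$; thus all three generating subgroups land in $PSp_{2n}(k)\times\{1\}$, whence $G\subseteq PSp_{2n}(k)\times\{1\}$ and $\tau\notin G$, proving (ii).

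The hard part will be this splitting step, i.e.\ ruling out the nonsplit cover $Sp_{2n}(k)$. The Levi argument is clean but presupposes that $\rho$ is a bona fide algebraic isogeny onto the adjoint group, so that the dichotomy ``$PSp_{2n}(k)\times\mathbb{Z}/2\mathbb{Z}$ versus $Sp_{2n}(k)$'' is available; making that algebraic structure precise is the real obstacle. A purely categorical alternative would verify directly that the lifts $\iota_2,\iota_3$ satisfy the relations of $PSp_{2n}(k)$ rather than those of $Sp_{2n}(k)$---concretely, that a lift of a Weyl-group representative squares to the identity and not to $\tau$---but this demands an explicit evaluation of the autoequivalences $\Gamma_{F_\sigma}$ and $\Gamma_{F_J}$ on an object of $\Z(\C_n)$ on which $\tau$ acts nontrivially, since on $\Ext_{\Z(\C_n)}^1(\chi,\eps)$ both $\tau$ and the identity act trivially and the distinction cannot be seen.
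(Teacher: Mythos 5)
Part (i) of your proposal is correct and follows the paper's own route: $\tau$ lies in the kernel by Corollary~\ref{tau on Ext}, kernel elements fix every point of $\mathbb{L}_0(\C_n)$ by the equivariance implicit in Proposition~\ref{Grassmannian bijection} and hence lie in $\St(\iota_c(\C_n))\cong \Pic(\C_n)\rtimes\Aut^{br}(\C_n)$, and Proposition~\ref{3 images} then kills the $\Aut^{br}(\C_n)$- and $\Pic_0(\C_n)$-components. Your Levi-meets-unipotent bookkeeping in the semidirect product actually spells out what the paper compresses into the one-line assertion that the kernel is contained in $\Pic(\C_n)$, so this half is fine.

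Part (ii) is where there is a genuine gap, and you flag it yourself. The reduction of (ii) to ``$\tau\notin G$'' is correct, and your two supporting arguments (every element of $GL_n(k)$ is a square, hence $GL_n(k)$ has no index-two subgroup; $GL_n(k)$ and $\Sym_n(k)$ admit no surjection onto $\mathbb{Z}/2\mathbb{Z}$) are valid \emph{conditionally} on the dichotomy that the central extension $1\to\langle\tau\rangle\to\Aut^{br}(\Z(\C_n))\xrightarrow{\rho} PSp_{2n}(k)\to 1$ is either split or isomorphic to $Sp_{2n}(k)\to PSp_{2n}(k)$. That dichotomy is a statement about central isogenies of \emph{algebraic} groups, and nothing in the paper or in your argument equips $\Aut^{br}(\Z(\C_n))$ with an algebraic group structure for which $\rho$ is a morphism; the infinitesimal result of \cite{CDa} identifies a Lie algebra of twisted derivations with $\mathfrak{s}\mathfrak{p}(2n)$ but does not produce such a structure. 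At the level of abstract groups, central extensions of $PSp_{2n}(k)$ by $\mathbb{Z}/2\mathbb{Z}$ are governed by the Schur multiplier of $PSp_{2n}(k)$, whose computation (Matsumoto-type results, unique divisibility of $K_2(k)$) is a deep external input that you neither prove nor cite. The alternative you sketch --- checking that a lift of a Weyl representative squares to $1$ rather than to $\tau$ --- is likewise not carried out, for the reason you note: the distinction is invisible on $\Ext^1_{\Z(\C_n)}(\chi,\varepsilon)$. So the decisive step of (ii) remains open in your write-up.

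For comparison, the paper's proof of (ii) invokes no extension theory at all: it combines the injectivity of $\rho$ on each of the three generating subgroups (read off from Proposition~\ref{3 images}) with the uniqueness of the Gauss-type factorization $M=XYZ$ of a symplectic matrix into a Levi factor, a lower-unipotent factor, and an upper-unipotent factor. (Strictly, such a factorization exists only for matrices in the big cell, i.e.\ with invertible upper-left block, but uniqueness where it exists --- in particular for the identity --- is what drives the argument.) The mechanism is elementary linear algebra inside $Sp_{2n}(k)$, entirely internal to the data already established, and this is the route to follow if you want to close (ii) without importing algebraicity of $\rho$.
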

\begin{proof}
(i) By Theorem~\ref{image of Pic} and Proposition~\ref{Grassmannian bijection}
the kernel of $\rho$ is a subgroup of $\Pic(\C_n)= \Pic_0(\C_n) \times \langle \tau \rangle$.
By Corollary~\ref{tau on Ext} the subgroup  $\langle \tau \rangle \subset \Pic(\C_n)$
acts trivially on the (projective) $\Ext$ space and so belongs to the kernel of $\rho$.

In Section~\ref{Picard Cn section} we discussed an isomorphism between $\Hinv(E(n)^*)$
and $\Pic_0(\C_n)$ (both groups are isomorphic to $\Sym_n(k)$).  Combining this with
Proposition~\ref{3 images}(iii) we deduce that the kernel of $\rho$ coincides with $\langle \tau \rangle$.

(ii) Every matrix $M\in Sp_{2n}(k)$ can be uniquely written as $M=XYZ$, where $X,\,Y,\,Z$ are matrices
from parts (i), (ii), and (iii) of Proposition~\ref{3 images}, respectively.  This implies the injectivity statement.
\end{proof}

\begin{theorem}
\label{main thm}
We have
\begin{equation}
\Aut^{br}(\Z(\C_n)) \simeq PSp_{2n}(k) \times \mathbb{Z}/2\mathbb{Z}.
\end{equation}
The action of $\Aut^{br}(\Z(\C_n))$ on $\mathbb{L}_0(\C_n)$ corresponds to the action
of $PSp_{2n}(k)$ on the Lagrangian Grassmannian $\text{Lag}(n,\, 2n)$.
\end{theorem}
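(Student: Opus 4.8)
The plan is to package the already-established facts about the homomorphism $\rho$ of \eqref{rho} into a short exact sequence, exhibit a splitting, and then show the resulting extension is a direct product. First I would note that $\varepsilon$ and $\chi$ are the only invertible objects of $\Z(\C_n)$, hence are fixed by every braided autoequivalence, so that Proposition~\ref{action on Ext} applies and yields $\rho$. Corollary~\ref{precise image of rho} identifies its image as $PSp_{2n}(k)$ and Proposition~\ref{Z/2Z}(i) identifies its kernel as $\langle \tau \rangle \cong \mathbb{Z}/2\mathbb{Z}$, giving the short exact sequence
\[
1 \to \langle \tau \rangle \to \Aut^{br}(\Z(\C_n)) \xrightarrow{\rho} PSp_{2n}(k) \to 1.
\]

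Next I would exhibit a splitting. Let $G \subseteq \Aut^{br}(\Z(\C_n))$ be the subgroup generated by the images of $\iota_1$, $\iota_2$, $\iota_3$ from Remark~\ref{iotas}. By Proposition~\ref{3 images} the three subgroups $\rho\circ\iota_1(\Aut_{\text{Hopf}}(E(n)))$, $\rho\circ\iota_2(\Hinv(E(n)))$, $\rho\circ\iota_3(\Hinv(E(n)^*))$ generate $PSp_{2n}(k)$ (this is exactly the content of Corollary~\ref{precise image of rho}), so $\rho(G) = PSp_{2n}(k)$; and by Proposition~\ref{Z/2Z}(ii) the restriction $\rho|_G$ is injective. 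Hence $\rho|_G \colon G \xrightarrow{\sim} PSp_{2n}(k)$, so that $G$ is a complement to $\ker\rho$: one has $G \cap \langle\tau\rangle = \{1\}$, and every $\alpha$ satisfies $\rho(\alpha) = \rho(g)$ for a unique $g\in G$, whence $\alpha g^{-1} \in \langle\tau\rangle$ and $\Aut^{br}(\Z(\C_n)) = \langle\tau\rangle\, G$.

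To promote this to a direct product I would argue that $\tau$ is central: $\langle\tau\rangle$ is normal (being $\ker\rho$), so conjugation by any element restricts to an automorphism of $\langle\tau\rangle\cong\mathbb{Z}/2\mathbb{Z}$; since $\Aut(\mathbb{Z}/2\mathbb{Z})$ is trivial, conjugation fixes $\tau$. Thus the semidirect product degenerates and
\[
\Aut^{br}(\Z(\C_n)) = G \times \langle\tau\rangle \cong PSp_{2n}(k) \times \mathbb{Z}/2\mathbb{Z},
\]
which is the first assertion. For the statement about the action, I would invoke the bijection $\L \mapsto \L\cap\Ext_{\Z(\C_n)}^1(\chi,\varepsilon)$ between $\mathbb{L}_0(\C_n)$ and $\text{Lag}(n,2n)$ from Proposition~\ref{Grassmannian bijection} and check that it is equivariant: for $\alpha\in\Aut^{br}(\Z(\C_n))$ an extension $V$ lies in $\L$ if and only if $\alpha(V)$ lies in $\alpha(\L)$, and $\rho(\alpha)$ carries the class of $V$ to that of $\alpha(V)$, so $\alpha(\L)\cap\Ext = \rho(\alpha)\bigl(\L\cap\Ext\bigr)$. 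Because the action on the Lagrangian Grassmannian is by subspaces it is unaffected by the projective scalar ambiguity in $\rho$, and because $\tau\in\ker\rho$ it factors through $PSp_{2n}(k)$; this is precisely the natural action of $PSp_{2n}(k)$ on $\text{Lag}(n,2n)$, as claimed.

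I do not anticipate a serious obstacle, since every ingredient is already in place; the proof is essentially an assembly. The one point requiring care is justifying the \emph{direct} (rather than merely semidirect) product structure, which hinges on the triviality of $\Aut(\mathbb{Z}/2\mathbb{Z})$ forcing the kernel generator $\tau$ to be central.
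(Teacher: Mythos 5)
Your proposal is correct and follows essentially the same route as the paper: both identify the kernel and image of $\rho$ via Corollary~\ref{precise image of rho} and Proposition~\ref{Z/2Z}(i) to get the central extension \eqref{central extension}, split it using Proposition~\ref{Z/2Z}(ii), and deduce equivariance from Proposition~\ref{Grassmannian bijection}. The only difference is that you spell out details the paper leaves implicit (centrality of $\tau$ from normality of an order-2 subgroup, the complement $G$ realizing the splitting, and the equivariance check), which is a welcome but not substantively different elaboration.
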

\begin{proof}
That $\mathbb{Z}/2\mathbb{Z} =\langle \tau \rangle$ is a central subgroup of $\Aut^{br}(\Z(\C_n))$
and that the quotient by it is isomorphic to $PSp_{2n}(k)$
follows from Corollary~\ref{precise image of rho} and Proposition~\ref{Z/2Z}(i).
Thus we have a central extension
\begin{equation}
\label{central extension}
1 \to \langle \tau \rangle  \to  \Aut^{br}(\Z(\C_n))  \to PSp_{2n}(k) \to 1.
\end{equation}
This extension splits by Proposition~\ref{Z/2Z}(ii).

Finally, the Lagrangian equivariance follows from Proposition~\ref{Grassmannian bijection}.
\end{proof}

\begin{corollary}
\label{main cor}
$\BrPic(\C_n) \cong PSp_{2n}(k) \times \mathbb{Z}/2\mathbb{Z}$.
\end{corollary}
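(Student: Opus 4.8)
The plan is to obtain this as a direct consequence of the two main results already established, with essentially no additional work. First I would specialize the canonical isomorphism of Theorem~\ref{BrPic=Aut-br} to the tensor category $\A = \C_n$, which produces
\[
\Phi: \BrPic(\C_n) \xrightarrow{\sim} \Aut^{br}(\Z(\C_n)): \M \mapsto \Phi_\M.
\]
This is precisely isomorphism \eqref{crucial iso}, and its value here is conceptual as much as technical: it replaces the Brauer-Picard group, whose multiplication is dictated by the abstract universal property of the relative tensor product $\bt_{\C_n}$ of invertible bimodule categories, by the group of braided tensor autoequivalences of the Drinfeld center, in which multiplication is simply composition of functors.

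With this identification in hand, I would then invoke Theorem~\ref{main thm}, which computes the target group as
\[
\Aut^{br}(\Z(\C_n)) \simeq PSp_{2n}(k) \times \mathbb{Z}/2\mathbb{Z}.
\]
Composing the two isomorphisms yields the asserted description of $\BrPic(\C_n)$, and nothing further is needed.

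Consequently I do not expect any genuine obstacle at this stage: the entire difficulty is front-loaded into Theorem~\ref{main thm}. The substantive work lies there, in constructing the projective representation $\rho$ of \eqref{rho} on the symplectic space $\Ext^1_{\Z(\C_n)}(\varepsilon,\chi)\cong k^{2n}$, in identifying its kernel with $\langle \tau \rangle$ using Corollary~\ref{tau on Ext} together with the Picard-group computation of Carnovale and Cuadra, in showing via Proposition~\ref{3 images} that its image exhausts $PSp_{2n}(k)$, and finally in splitting the central extension \eqref{central extension}. Relative to all of that, the present corollary is a purely formal transcription.
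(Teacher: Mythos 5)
Your proposal is correct and matches the paper's own (implicit) argument exactly: the corollary is deduced by combining the isomorphism $\BrPic(\C_n) \cong \Aut^{br}(\Z(\C_n))$ of Theorem~\ref{BrPic=Aut-br} with the computation $\Aut^{br}(\Z(\C_n)) \simeq PSp_{2n}(k) \times \mathbb{Z}/2\mathbb{Z}$ of Theorem~\ref{main thm}. You are also right that all the substantive work resides in Theorem~\ref{main thm}, so no further justification is needed here.
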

%



\begin{thebibliography}{1}


\bibitem[AEG]{AEG}  N.~Andruskiewitsch, P.~Etingof, S.Gelaki,
\textit{Tringular Hopf Algebras with the Chevalley property},
Michigan Math. J., \textbf{49} (2001),  277-298.

\bibitem[BDG]{BDG} M. Beattie, S. Dascalescu, L. Grunenfelder, \textit{Constructing
Pointed Hopf Algebras by Ore Extensions}, J.\ Algebra \textbf{225},
743-770 (2000).

\bibitem[BC]{BC} J.~Bichon, G.~Carnovale,
{\em Lazy cohomology: an analogue of the Schur multiplier for arbitrary Hopf algebras},
J.\ Pure Appl.\ Algebra \textbf{204} (2006) no. 3, 627-665.

\bibitem[CD]{CD} S.~Caenepeel, S.~D\u{a}sc\u{a}lescu,
\textit{Hopf algebras of dimension
$2^{n}$}, Bull. London Math. Soc. \textbf{31}, 17-24 (1999).

\bibitem [COZ]{COZ}  S.~Caenepeel, F.~Van Ostayen, and Y.H.~Zhang,
\textit{The Brauer group of Yetter-Drinfeld module algebras},
Trans.\ AMS\ \textbf{349} (1997), no. 9,  3737--3771.

\bibitem[C]{C}  G.~Carnovale,
{\em The Brauer group of modified supergroup algebras},
J.\ Algebra \textbf{305} (2006), no. 2, 993-1036.

\bibitem[CC]{CC} G.~Carnovale, J.~Cuadra,
\textit{Cocycle twisting of $E(n)$-module
algebras and applications to the Brauer group}, K-Theory
\textbf{33} (3), 251-276 (2004).

\bibitem[CDa]{CDa} J.~Cuadra, A.~Davydov,
\textit{Twisted derivations of Drinfeld double}, preprint.

\bibitem[ChOZ]{ChOZ} H.~Chen, F.~van Oystaeyen, Y.~Zhang,
\textit{The Green Rings of Taft Algebras},
Proc. Amer. Math. Soc. \textbf{142} (2014), no. 3, 765-775.

\bibitem[CZ]{CZ} H.~Chen, Y.~Zhang,
\textit{Cocycle deformations and Brauer groups},
Comm.\ Algebra, \textbf{35} (2007), 399-433.


\bibitem[Da]{Da} A.~Davydov, \textit{Twisting of Monoidal structures},
eprint arXiv: q-alg/9703001, preprint MPI 95-123.

\bibitem [DN]{DN}  A.~Davydov, D.~Nikshych,
{\em The Picard crossed module of a braided tensor category},
Algebra and Number Theory, \textbf{7} (2013), no. 6, 1365Ð1403.

\bibitem [De]{De} P.~Deligne,
{\em Cat\'egories tensorielles},  Moscow Math.\ J.\  \textbf{2} (2002),  no. 2, 227--248


\bibitem [DGNO]{DGNO}  V.~Drinfeld, S.~Gelaki, D.~Nikshych, and V.~Ostrik.
\textit{On braided fusion categories I},
Selecta Mathematica,  \textbf{16}  (2010), no.\ 1,  1~{-}~119.

\bibitem [DMNO]{DMNO}  A.~Davydov, M.~M\"uger, D.~Nikshych, and V.~Ostrik,
{\em The Witt group of non-degenerate braided fusion categories},
Journal f\"ur die reine und angewandte Mathematik,
\textbf{677} (2013), 135-177.

\bibitem[EGNO]{EGNO} P.~Etingof,   S.~Gelaki, D.~Nikshych, and V.~Ostrik.
{\em  Tensor categories}, Mathematical Surveys and Monographs, \textbf{205},
American Mathematical Society (2015).

\bibitem[ENO]{ENO}   P.~Etingof, D.~Nikshych, and V.~Ostrik.
{\em  Fusion categories and homotopy theory},
Quantum Topology,    \textbf{1} (2010), no.\ 3,  209-273.

\bibitem[EO]{EO} P.~Etingof, V.~Ostrik,
\textit{Finite tensor categories},
Moscow Math.\ Journal \textbf{4} (2004), 627-654.

 \bibitem[JS]{JS} A.~Joyal, R.~Street,
\textit{Braided tensor categories},
Adv.\ Math., \textbf{102}, 20-78 (1993).

\bibitem [M]{M} S.~Montgomery,
\textit{Hopf Algebras and Their Actions on Rings}, CBMS Regional Conference
Series in Mathematics, \textbf{82}, AMS, (1993).

\bibitem[Mo]{Mo} M.~Mombelli,
\textit{The Brauer-Picard group of the representation category of finite supergroup algebras},
eprint,  arXiv:1202.6238 [math.QA].

\bibitem[Mu]{Mu} M.~M\"uger,
\textit{On the structure of modular categories},
Proc. Lond. Math. Soc., \textbf{87} (2003), 291-308.


\bibitem[N]{N} W.D.~Nichols, \textit{Bialgebras of type one},
Comm. Algebra \textbf{6} (15), 1521-1552 (1978).

\bibitem[NR]{NR} D.~Nikshych, B.~Riepel,
\textit{Categorical Lagrangian Grassmannians and Brauer-Picard groups of pointed fusion categories},
J.\ Algebra \textbf{411} (2014), 191-214.


\bibitem[PvO1]{PvO1} F.~Panaite, F.~van Oystaeyen, \textit{Quasitriangular structures for
some pointed hopf algebras of dimension $2^{n}$}, Comm.\ Algebra
\textbf{27} (10), 4929-4942 (1999).

\bibitem[PvO2]{PvO2} F. Panaite, F. van Oystaeyen, \textit{Clifford-type algebras as cleft
extensions for some pointed Hopf algebras}, Comm. Algebra
\textbf{28} (2), 585-600 (2000).

\end{thebibliography}
\end{document}